\documentclass[final, 12pt, leqno]{amsart}
\addtolength{\oddsidemargin}{-.875in}
\addtolength{\evensidemargin}{-.875in}
\addtolength{\textwidth}{1.75in}
\usepackage[utf8x]{inputenc}
\usepackage{amsmath, amsthm, amsfonts, amssymb, enumerate, wrapfig,
tikz,  color, rotating, colortbl,   datetime,  float,  stmaryrd,
hyperref, bookmark, cleveref, ifthen, tikz-3dplot, setspace, mathtools}
\usetikzlibrary{decorations.markings, decorations.pathreplacing, decorations.pathmorphing}
\usepackage{amscd}
\title{Link Homology and Frobenius Extensions II}
 \author{Mikhail Khovanov} 
 \address{Department of Mathematics, Columbia University, New York, NY 10027, USA}
 \email{\href{mailto:khovanov@math.columbia.edu}{khovanov@math.columbia.edu}}
 \author{Louis-Hadrien Robert}
 \address{Université de Genève, Section de Mathématiques, rue du
 lièvre 2--4, 1227 Genève, Switzerland}
 \email{\href{mailto:louis-hadrien.robert@unige.ch}{louis-hadrien.robert@unige.ch}}
 \date{May 19, 2020}
 \hypersetup{
    pdftoolbar=true,        
    pdfmenubar=true,        
    pdffitwindow=false,     
    pdfstartview={FitH},    
    pdfkeywords={}, 
    pdfnewwindow=true,      
    colorlinks=true,       
    linkcolor=red,          
    citecolor=teal,        
    filecolor=magenta,      
    urlcolor=violet,          
    linkbordercolor=red,
    citebordercolor=teal,
    urlbordercolor=violet, 
    linktocpage=true}
\usetikzlibrary{arrows, decorations.markings, decorations, patterns, positioning, decorations.pathreplacing, decorations.pathmorphing}
\tikzset{->-/.style={decoration={markings, mark=at position .5 with {\arrow{>}}},postaction={decorate}}}
\tikzset{-<-/.style={decoration={markings, mark=at position .5 with {\arrow{<}}},postaction={decorate}}}
\let\oldtocsubsection\tocsubsection
\renewcommand\tocsubsection[3]{\hspace{0.5cm}\oldtocsubsection{#1}{#2}{#3}}
\let\oldtocsubsubsection\tocsubsubsection
\renewcommand\tocsubsubsection[3]{\hspace{1cm}\oldtocsubsubsection{#1}{#2}{#3}}

\let\emptyset\varnothing

\newcounter{res}[section]
\numberwithin{res}{section}

\newtheorem{lem}[res]{Lemma}

\newtheorem{prop}[res]{Proposition}
\newtheorem{cor}[res]{Corollary}

\theoremstyle{definition}

\newtheorem{dfn}[res]{Definition}

\newtheorem{exa}[res]{Example}

\newcommand{\imagesfolder}{.}
\setlength{\marginparwidth}{1.2in}

\def\co{\colon\thinspace}

\newcommand{\NB}[1]{\ensuremath{\vcenter{\hbox{#1}}}}

\newcommand{\ZZ}{\ensuremath{\mathbb{Z}}}
\newcommand{\CC}{\ensuremath{\mathbb{C}}}

\newcommand{\RR}{\ensuremath{\mathbb{R}}}

\renewcommand{\SS}{\ensuremath{\mathbb{S}}}
 \newcommand
{\Id}{\operatorname{Id}} \newcommand{\id}{\mathrm{Id}}

\newcommand{\Ker}{\mathop{\mathrm{Ker}}\nolimits}
\newcommand{\Hom}{\mathop{\mathrm{Hom}}}
\newcommand{\End}{\mathop{\mathrm{End}}}

\newcommand{\rk}{\mathrm{rk}}

\newcommand\kup[1]{\left\langle #1 \right\rangle}
\newcommand\kupo[1]{\left\langle #1 \right\rangle_{\omega}}

\newcommand{\Z}{\mathbb{Z}}

\newcommand{\lra}{\longrightarrow}

\newcommand{\mcD}{\mathcal{D}}
\newcommand{\mcF}{\mathcal{F}}

\newcommand{\adm}{\mathrm{adm}}
\newcommand{\Cob}{\mathsf{Cob}}

\newcommand{\undell}{\underline{\ell}}

\newcommand{\mtH}{\mathrm{H}}
\newcommand{\Smod}{\ensuremath{S\textrm{-}\mathsf{mod}}}

\newcommand{\circled}[1]{\NB{\tikz[font=\tiny]{\draw[ very thin, black] (0,0) circle (1.1mm) node[black, scale =0.72] {\bf #1};}}}

\newcommand{\SeSu}{\ensuremath{\mathsf{SeSu}}}

\newcommand{\cfCircleSplit}[3]{\NB{\tikz[scale=#1]{
\begin{scope}
  \begin{scope}[yshift= 1.5cm]
    \draw (1,0) arc (0:360:1cm and 0.3cm);
    \draw[thin] (1,0) arc (0:-180:1cm and 1.2cm);
    \node at (0, -0.75) {$#2$};
  \end{scope}
  \begin{scope}[yshift =-1.5cm]
    \draw (1,0) arc (0:-180:1cm and 0.3cm); 
    \draw[densely dotted] (1,0) arc (0:180:1cm and 0.3cm); 
    \draw (1,0) arc (0:180:1cm and 1.2cm);
    \node at (0, 0.75) {$#3$};
  \end{scope}
\end{scope}}}}

\newcommand{\cfCircleId}[2][]{\NB{\tikz[scale=#2]{
\begin{scope}
  \begin{scope}[yshift= 1.5cm]
    \draw (1,0) arc (0:360:1cm and 0.3cm)coordinate [pos=0](e) coordinate [pos=0.5](f);
  \end{scope}
  \node at (0,0) {$#1$};
  \begin{scope}[yshift =-1.5cm]
    \draw (1,0) arc (0:-180:1cm and 0.3cm)coordinate [pos=0](g) coordinate [pos=1](h);
    \draw[densely dotted] (1,0) arc (0:180:1cm and 0.3cm); 
  \end{scope}
  \draw (e) -- (g);
  \draw (f) -- (h); 
\end{scope}}}}

\newcommand{\cfDecSquare}[2]{\NB{\tikz[scale=#1]{
\begin{scope}
  \node at (0,0) {$#2$};
  \draw (-1, -1) -- (-1,1) -- (1,1)  -- (1, -1) -- (-1,-1);
\end{scope}}}}

\newcommand{\cfDoubleSeamA}[1]{\NB{\tikz[scale=#1]{
\begin{scope}[decoration={border,segment length=1mm,amplitude=#1mm,angle=90}]
  \draw (-1, -1) -- (-1,1) -- (1,1)  -- (1, -1) -- (-1,-1);
  \draw[red, postaction={draw, decorate}] (-1, 0) .. controls +
  (0.5,0) and  +(0,0.5) .. (0, -1);
  \draw[red, postaction={draw, decorate}] ( 1, 0) .. controls +
  (-0.5,0) and  +(0,-0.5) .. (0,  1);
\end{scope}}}}

\newcommand{\cfDoubleSeamB}[1]{\NB{\tikz[scale=#1]{
\begin{scope}[decoration={border,segment length=1mm,amplitude=#1mm,angle=90}]
  \draw (-1, -1) -- (-1,1) -- (1,1)  -- (1, -1) -- (-1,-1);
  \draw[red, postaction={draw, decorate}] (1, 0) .. controls +
  (-0.5,0) and  +(0,0.5) .. (0, -1);
  \draw[red, postaction={draw, decorate}] (-1, 0) .. controls +
  (0.5,0) and  +(0,-0.5)  .. (0,  1);
\end{scope}}}}

\newcommand{\cfSeamSquareTwo}[3]{\NB{\tikz[rotate= -90, scale=#1]{
\begin{scope}[decoration={border,segment length=1mm,amplitude=#1mm,angle=90}]
  \node at (-.5,0) {$#2$};
  \node at (0.5,0) {$#3$};
  \draw (-1, -1) -- (-1,1) -- (1,1)  -- (1, -1) -- (-1,-1);
  \draw[red, postaction={draw, decorate}] (0,-1) -- (0, 1);
\end{scope}}}}

\newcommand{\cfSeamSquare}[3]{\NB{\tikz[scale=#1]{
\begin{scope}[decoration={border,segment length=1mm,amplitude=#1mm,angle=90}]
  \node at (-.5,0) {$#2$};
  \node at (0.5,0) {$#3$};
  \draw (-1, -1) -- (-1,1) -- (1,1)  -- (1, -1) -- (-1,-1);
  \draw[red, postaction={draw, decorate}] (0,1) -- (0, -1);
\end{scope}}}}

\newcommand{\cfGenusTwoBdy}[1]{\NB{\tikz[scale=#1]{
\begin{scope}
  \draw (0,0) circle (1cm and 0.3cm);
  \draw[very thin] (-1, 0)
  .. controls +(0,-0.3) and +(0,1) .. (-3, -2)
  .. controls +(0,-2) and +(-1,0) .. (0, -2)
  .. controls +(1, 0) and +(0,-2) .. (3, -2)
  .. controls +(0, 1) and +(0,-0.3) .. (1, 0);
  \draw[very thin] (-1.1, -1) arc (340: 280: 1.5) coordinate [pos =0.9]
  (a);
  \draw[very thin] (a) arc (154: 106: 1.5);
  \draw[very thin] (1.1, -1) arc (200: 260: 1.5) coordinate [pos =0.9]
  (a);
  \draw[very thin] (a) arc ( 26: 74: 1.5);
\end{scope}
}}}

\newcommand{\cfGenusTwoCup}[2]{\NB{\tikz[scale=#1]{
\begin{scope}
  \draw (0,0) circle (1cm and 0.3cm);
  \draw[very thin] (1,0) arc (360:180:1cm and 1cm);
  \node at (0, -2) {$#2$}; 
  \draw[very thin](-3, -2)
  .. controls +(0,-2) and +(-1,0) .. (0, -2.5)
  .. controls +(1, 0) and +(0,-2) .. (3, -2)
  .. controls +(0,2) and +(1,0) .. (0, -1.5)
  .. controls +(-1, 0) and +(0,2) .. (-3, -2);
  \draw[very thin] (-1, -2) arc (300: 240:1.5) coordinate[pos=0.85]
  (a);
  \draw[very thin] (a) arc (111: 69:1.5);
  \draw[very thin] ( 1, -2) arc (240: 300:1.5) coordinate[pos=0.85]
  (b);
    \draw[very thin] (b) arc (69: 111:1.5);
\end{scope}
}}}

\newcommand{\cfCup}[2]{\NB{\tikz[scale=#1]{
\begin{scope}
  \draw (0,0) circle (1cm and 0.3cm);
  \draw[very thin] (1,0) arc (360:180:1cm and 1.2cm);  
  \node at (0, -0.75) {$#2$};
\end{scope}
}}}

\newcommand{\cfCap}[2]{\NB{\tikz[scale=#1]{
\begin{scope}
  \draw (0,1) arc (0:-180: 1cm and 0.3cm);
  \draw[densely dotted] (0,1) arc (0:180: 1cm and 0.3cm);
  \draw[very thin] (0,1) arc (0:180:1cm and 1.2cm);  
  \node at (0, 0.75) {$#2$};
\end{scope}
}}}

\newcommand{\cfSphere}[2]{\NB{\tikz[scale=#1]{
\begin{scope}
  \draw[very thin] (1,0) arc (0:-180: 1cm and 0.3cm);
  \draw[very thin, densely dotted] (1,0) arc (0:180: 1cm and 0.3cm);
  \draw[very thin] (1,0) arc (360:0:1cm and 1cm);  
  \node at (0, -0.65) {$#2$};
\end{scope}
}}}

\newcommand{\cfSphereSpecial}[3]{\NB{\tikz[scale=#1]{
\begin{scope}
  \draw[very thin] (1,0) arc (0:-180: 1cm and 0.3cm);
  \draw[very thin, densely dotted] (1,0) arc (0:180: 1cm and 0.3cm);
  \draw[very thin] (1,0) arc (360:0:1cm and 1cm);  
  \node at (0, -0.65) {$#2$};
  \node at (0, 0.65) {$#3$};
\end{scope}
}}}

\newcommand{\cfGenusOneBdy}[1]{\NB{\tikz[scale=#1]{
\begin{scope}
  \draw (0,0) circle (1cm and 0.3cm);
  \draw[very thin] (-1, 0)
  .. controls +(0,-0.3) and +(0,1) .. (-2, -2)
  .. controls +(0,-1.5) and +(0,-1.5) .. (2, -2)
  .. controls +(0, 1) and +(0,-0.3) .. (1, 0);
  \draw[very thin] (0, -1.8) arc (270: 300: 1.8) coordinate [pos =0.8]
  (a);
  \draw[very thin] (0, -1.8) arc (270: 240: 1.8);
  \draw[very thin] (a) arc (66: 114: 1.8);
\end{scope}
}}}

\newcommand{\cfDelta}[2]{\NB{\tikz[scale=#1]{
  \begin{scope}
    \draw[densely dotted] (1, -1) arc (0:180:1cm and 0.3cm);
    \draw (1, -1) arc (0:-180:1cm and 0.3cm);
    \draw (-1, 1) arc (0:360:1cm and 0.3cm);
    \draw (3, 1) arc (0:360:1cm and 0.3cm);
    \draw[very thin] (1,  1) .. controls +(0,-0.5) and + (0, -0.5) .. +(-2,0) node [midway, below] {$#2$};
    \draw[very thin] (-1,  -1) .. controls +(0,0.3) and + (0, -0.3) .. +(-2,2);
    \draw[very thin] (1,  -1) .. controls +(0,0.3) and + (0, -0.3) .. +(2,2);
  \end{scope}
}}}

\newcommand{\cfMu}[2]{\NB{\tikz[scale=#1]{
  \begin{scope}
    \draw[densely dotted] (3, -1) arc (0:180:1cm and 0.3cm);
    \draw (3, -1) arc (0:-180:1cm and 0.3cm);
    \draw[densely dotted] (-1, -1) arc (0:180:1cm and 0.3cm);
    \draw (-1, -1) arc (0:-180:1cm and 0.3cm);
    \draw (1, 1) arc (0:360:1cm and 0.3cm);
    \draw[very thin] (1,  -1) .. controls +(0,0.5) and + (0, 0.5) .. +(-2,0) node [midway, below] {$#2$};
    \draw[very thin] (-3,  -1) .. controls +(0,0.3) and + (0, -0.3) .. +(2,2);
    \draw[very thin] (3,  -1) .. controls +(0,0.3) and + (0, -0.3) .. +(-2,2);
  \end{scope}
}}}

\newcommand{\cfGenusThree}[1]{\NB{\tikz[scale=#1]{
\begin{scope}
  \draw (270:1) .. controls +(0:1) and +(240:1)  .. (330:3);
  \draw (30:1) .. controls +(-60:1) and +(60:1)  .. (330:3);
  \draw (333:1)  arc (210:270:1.3cm) coordinate[pos=0.9] (a);
  \draw (a) arc (36: 84: 1.3cm);
\end{scope}
\begin{scope}[rotate=120]
  \draw (270:1) .. controls +(0:1) and +(240:1)  .. (330:3);
  \draw (30:1) .. controls +(-60:1) and +(60:1)  .. (330:3);
  \draw (333:1)  arc (210:270:1.3cm) coordinate[pos=0.9] (a);
  \draw (a) arc (36: 84: 1.3cm);
\end{scope}
\begin{scope}[rotate=-120]
  \draw (270:1) .. controls +(0:1) and +(240:1)  .. (330:3);
  \draw (30:1) .. controls +(-60:1) and +(60:1)  .. (330:3);
  \draw (333:1)  arc (210:270:1.3cm) coordinate[pos=0.9] (a);
  \draw (a) arc (36: 84: 1.3cm);
\end{scope}}}}

\newcommand{\cfTubeTT}[2]{\NB{\tikz[scale=#1]{
  \begin{scope}
    \draw (-1, 1) arc (0:360:1cm and 0.3cm);
    \draw (3, 1) arc (0:360:1cm and 0.3cm);
    \draw[very thin] (1,  1) .. controls +(0,-0.7) and + (0, -0.7) .. +(-2,0) node [midway, below] {$#2$};
    \draw[very thin] (-3,  1) .. controls +(0,-2.1) and + (0, -2.1) .. (3,1);
  \end{scope}
}}}

\newcommand{\circlein}[1]{\NB{\tikz[scale=#1]{
  \begin{scope}[scale =#1, decoration={border,segment length=1mm,amplitude=#1mm,angle=90}]
    \draw[red, postaction={draw, decorate}] (0,0) circle (0.6cm);
  \end{scope}
  }}}

\newcommand{\circleout}[1]{\NB{\tikz[scale=#1]{
  \begin{scope}[scale =#1, decoration={border,segment length=1mm,amplitude=#1mm,angle=-90}]
    \draw[red, postaction={draw, decorate}] (0,0) circle (0.6cm);
  \end{scope}
  }}}

\newcommand{\warning}[1]{\NB{\tikz[scale=#1]{
\node[scale=#1, font=\normalsize] at (0,0.05) {$\mathbf{!}$};
\draw (-30:0.35) --(90:0.35) -- (210:0.35) -- cycle;
}}}

\newcommand{\feSeamedCup}[2]{\NB{\tikz[scale=#1]{
\begin{scope}[decoration={border,segment length=#1mm,amplitude=#1mm,angle=-90}]
  \begin{scope}
    \draw (1,0) arc (0:360:1cm and 0.3cm)coordinate [pos=0](e)
    coordinate [pos=0.5](f) coordinate[pos=0.65] (bt)
    coordinate[pos=0.85] (at);
    \draw[thin] (1,0) arc (0:-180:1cm and 1.2cm);
    \node at (0,-0.5) {$#2$};
  \end{scope}
  \draw[red, postaction={draw, decorate}] (at) .. controls +(0, -1)  and +(0, -1) .. (bt);
\end{scope}
}}}

\newcommand{\feSeamedTwistedTube}[3]{\NB{\tikz[scale=#1]{
\begin{scope}
\begin{scope}  [decoration={border,segment length=#1mm,amplitude=#1mm,angle=-90}]
  \begin{scope}[xshift=0cm]
    \draw (1,0) arc (0:360:1cm and 0.3cm)coordinate [pos=0](e)
    coordinate [pos=0.5](f) coordinate[pos=0.65] (bl)
    coordinate[pos=0.85] (al);
    \node at (0,-0.5) {$#3$};
    \draw[very thin] (-1,0) arc (180:270:1cm and 1.2cm) coordinate[pos=1] (LB);
  \end{scope}
  \begin{scope}[xshift=#2cm]
    \begin{scope}[xshift=2cm]
    \draw (1,0) arc (0:360:1cm and 0.3cm)coordinate [pos=1](f)
    coordinate [pos=0.5](f) coordinate[pos=0.65] (br)
    coordinate[pos=0.85] (ar);
    \draw[very thin] (1,0) arc (0:-90:1cm and 1.2cm) coordinate[pos=1]
    (RB);
  \end{scope}
  \draw[very thin] (LB) -- (RB);
  \draw[very thin] (f) .. controls +(0, -0.4) and +(0, -0.4) .. (e);
  \draw[red, postaction={draw, decorate}] (al) .. controls +(0, -0.6)  and +(0, -0.6) .. (br);
  \draw[red, postaction={draw, decorate}] (ar) .. controls +(0, -1)  and +(0, -1) .. (bl);
\end{scope}
\end{scope}
\end{scope}
}}}

\newcommand{\feMarkedPoint}[1]{\NB{\tikz[scale=#1]{
  \begin{scope}       \draw (0,0) --  +(0.5,0);
   \draw [red, -latex, thick] (0.25, 0) -- ( 0.35,0);
  \end{scope}
}}}

\newcommand{\feOrientation}[1]{\NB{\tikz[scale=#1]{
  \begin{scope}       \draw (0,0) --  +(0.5,0);
   \draw [->] (0.25, 0) -- ( 0.27,0);
  \end{scope}
}}}

 \subjclass[2020]{57K18, 57K16, 18N25, 13B02}

\begin{document}
\begin{abstract}
The first two sections of the paper provide a  convenient scheme and additional diagrammatics for working with Frobenius extensions responsible for key flavours of equivariant SL(2) link homology theories. The goal is to clarify some basic structures in the theory and propose a setup 
to work over sufficiently non-degenerate base rings. The third section works out two related SL(2) evaluations for seamed surfaces.   
\end{abstract}
\maketitle
\tableofcontents

\section{A cube of four rank two Frobenius extensions} \label{sec:algebrasl2}

\subsection{Road map for Frobenius extensions in \texorpdfstring{$U(2)$}{U(2)}-equivariant Khovanov homology} \label{sec_road_map}
A building block for the Khovanov homology~\cite{MKCatJones, BNCatJones}
is a specific $(1+1)$-dimensional TQFT associated to a commutative
Frobenius algebra of rank two. Different versions of this TQFT exist,
including those studied in  \cite{BNTangle,MKFrobenius} and in many
other papers
\cite{CaprauTangle,vogel2015functoriality,ClarkMorrisonWalkerFunctoriality,BNMorrisonLee}. These different
versions may yield different types of information. The first striking
example is the construction by Rasmussen~\cite{RasmussenSlice} of a lower bound on the
slice genus of knots based on degenerating one of these TQFTs into the Lee $(1+1)$-dimensional TQFT~\cite{Lee}. An attempt to sort through some of these theories, including Bar-Natan theories~\cite{BNTangle}, had been made in~\cite{MKFrobenius}. In this note, without trying to be all-inclusive, we go through several key variations on the usual $U(2)$-equivariant Frobenius pair $(R,A)$, with $R$ and $A$ being the  equivariant $U(2)$ cohomology groups with $\Z$ coefficients of a point and a 2-sphere, respectively. We collect the four Frobenius pairs into Figure~\ref{fig:first-cube} below; see  also Figure~\ref{fig_bigger-cube} for a more detailed view of this cube.

\begin{figure}[ht]
    \centering
    \NB{\tikz[yscale=1.2, xscale=1.4]{\input{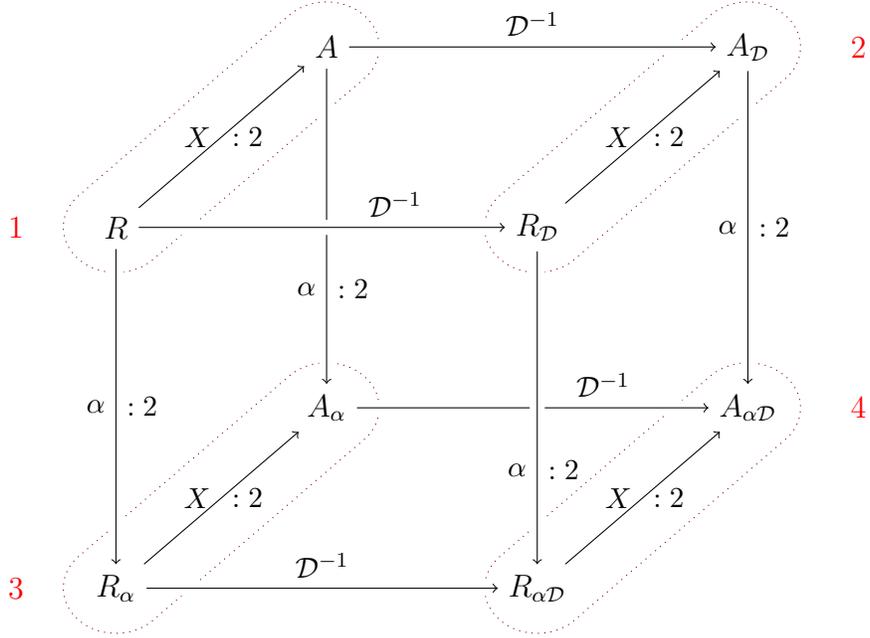}}}
    \caption{Cube of four Frobenius extensions; red numbers $1-4$ on the sides provide links to descriptions of these extensions.}
    \label{fig:first-cube}
\end{figure}

The diagram consists of four graded Frobenius extensions of degree two  
\begin{equation}\label{eq_4_extensions} 
(R,A), \   (R_{\mcD},A_{\mcD}), \ (R_{\alpha},A_{\alpha}), \ (R_{\alpha\mcD},A_{\alpha\mcD})
\end{equation} 
and maps between them. In each of these extensions $(R_{\ast},A_{\ast})$ Frobenius algebra $A_{\ast}$ is a free graded $R_{\ast}$ module of rank two with a basis $\{1,X\}$, with the trace map $\epsilon:A_{\ast}\lra R_{\ast}$ 
given by 
\begin{equation} \label{eq_trace_map} 
    \epsilon(1)=0, \ \  \epsilon(X)=1,
\end{equation} 
and the comultiplication
\begin{equation}\label{equation_comult}
\begin{array}{rcl} 
     \Delta(1) & = & X \otimes 1 + 1\otimes (X-E_1), \\  \Delta(X) & = & X\otimes X - E_2 1\otimes 1.
\end{array}
\end{equation}
Inclusions of rings $R_{\ast}\subset A_{\ast}$ in  the extensions are shown by  the four  arrows that go diagonally pointing northeast. Horizontal and vertical arrows represent certain inclusions of ground rings $R_{\ast}$ and induced inclusions of rings $A_{\ast}$.  

\vspace{0.1in} 

\hypertarget{frobext1}{{\bf 1.}} The first extension $(R,A)$ consists of graded rings  
\begin{itemize}
    \item $R = \Z[E_1,E_2]$, $\deg(E_1)=2, \deg(E_2)=4,$
    \item $A = R[X]/(X^2-E_1 X + E_2)$, $\deg(X)=2$. 
\end{itemize}
Algebra $A$ is a commutative Frobenius $R$-algebra of rank two with a basis $\{1,X\}$. The trace and comultiplication are given by formulas (\ref{eq_trace_map}), (\ref{equation_comult}). 

Algebra $A$ is a free graded $R$-module with homogeneous basis $\{1,X\}$. 
Elements $X$ and $E_1-X$ in $A$ are roots of the  polynomial  $x^2-E_1 x + E_2$ with coefficients in $R$, and there is an $R$-linear involution $\sigma$ of the algebra $A$ transposing $X$ and $E_1-X$. We can somewhat restore this symmetry between $X$ and $E_1-X$ in the notation and denote 
\begin{equation}
X_1 \ = \ X, \ \ X_2 \ = \ E_1 - X, 
\end{equation} 
so that 
\begin{equation*}
    X_1+X_2 = E_1, \ \  X_1 X_2 = E_2 
\end{equation*}
and 
\begin{equation}
    \sigma(X_1) = X_2, \ \ \sigma(X_2)=X_1, \ \ \sigma(1)=1, 
\end{equation}
with $\sigma(a+bX_1)=a+bX_2$ for $a,b\in R$. 
We have 
\begin{eqnarray}\label{equation_comult_roots}
    \Delta(1) & = & X_1 \otimes 1 - 1 \otimes X_2 \ = \ - (X_2
\otimes 1 - 1 \otimes X_1),  \\
\Delta(X_1) & = & X_1 \otimes X_1 - E_2\, 1\otimes 1, \\
\Delta(X_2) & = & E_2 \, 1\otimes 1 - X_2 \otimes X_2  .
\end{eqnarray}

Comultiplication introduces a sign which breaks the symmetry between the two roots, and comultiplication  does not commute with $\sigma$. Likewise, the trace map breaks the symmetry between the roots, 
\begin{equation} \label{eq_trace_map_roots} 
    \epsilon(X_1)=1,\ \  \epsilon(X_2)=-1,
\end{equation}
and $\epsilon\sigma = - \epsilon$. 

\vspace{0.1in} 

In Figure~\ref{fig:frob} we recall the usual diagrammatic conventions for 2-dimensional TQFTs and commutative Frobenius algebras, with our distinguished  generator $X$ denoted by a dot.
 
\begin{figure}[ht]
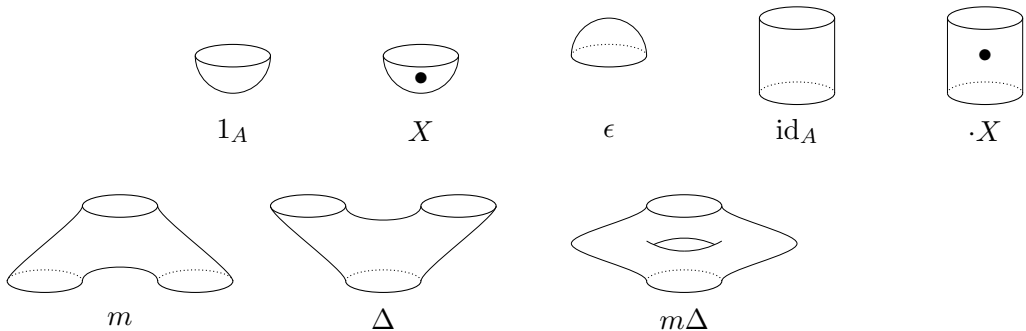

  \centering
  \NB{\tikz[scale = 0.5]{\newcommand{\yparam}{-2}
\begin{scope}[scale = 1, font= \small]
  \begin{scope}[xshift =0cm]
    \draw (1, 0) arc (0:360:1cm and 0.3cm);
    \draw[very thin] (1,0) arc  (0:-180:1cm);
    \node at (0, \yparam) {$1_A$};
  \end{scope}
  \begin{scope}[xshift =5cm]
    \draw (1, 0) arc (0:360:1cm and 0.3cm);
    \draw[very thin] (1,0) arc  (0:-180:1cm);
    \node at (0, -0.6) {$\bullet$};
    \node at (0, \yparam) {$X$};
  \end{scope}
  \begin{scope}[xshift = 10cm]
    \draw[densely dotted] (1, 0) arc (0:180:1cm and 0.3cm);
    \draw (1, 0) arc (0:-180:1cm and 0.3cm);
    \draw[very thin] (1,0) arc  (0: 180:1cm);
    \node at (0, \yparam) {$\epsilon$};
  \end{scope}
  \begin{scope}[xshift = 15cm]
    \draw[densely dotted] (1, -1) arc (0:180:1cm and 0.3cm);
    \draw (1, -1) arc (0:-180:1cm and 0.3cm);
    \draw (1, 1) arc (0:360:1cm and 0.3cm);
    \draw[very thin] (1,-1) -- +(0,2);
    \draw[very thin] (-1,-1) -- +(0,2);
    \node at (0, \yparam) {$\mathrm{id}_A$};
  \end{scope}
  \begin{scope}[xshift = 20cm]
    \draw[densely dotted] (1, -1) arc (0:180:1cm and 0.3cm);
    \draw (1, -1) arc (0:-180:1cm and 0.3cm);
    \draw (1, 1) arc (0:360:1cm and 0.3cm);
    \draw[very thin] (1,  -1) -- +(0,2);
    \draw[very thin] (-1, -1) -- +(0,2);
    \node at (0, 0) {$\bullet$};
    \node at (0, \yparam) {$\cdot X$};
  \end{scope}
 \begin{scope}[yshift = -5cm, xshift = -30cm]
  
  \begin{scope}[xshift = 27cm]
    \draw[densely dotted] (3, -1) arc (0:180:1cm and 0.3cm);
    \draw (3, -1) arc (0:-180:1cm and 0.3cm);
    \draw[densely dotted] (-1, -1) arc (0:180:1cm and 0.3cm);
    \draw (-1, -1) arc (0:-180:1cm and 0.3cm);
    \draw (1, 1) arc (0:360:1cm and 0.3cm);
    \draw[very thin] (1,  -1) .. controls +(0,0.5) and + (0, 0.5) .. +(-2,0);
    \draw[very thin] (3,  -1) .. controls +(0,0.3) and + (0, -0.3) .. +(-2,2);
    \draw[very thin] (-3,  -1) .. controls +(0,0.3) and + (0, -0.3) .. +(2,2);
    \node at (0, \yparam) {$m$};
  \end{scope}
  \begin{scope}[xshift = 34cm]
    \draw[densely dotted] (1, -1) arc (0:180:1cm and 0.3cm);
    \draw (1, -1) arc (0:-180:1cm and 0.3cm);
    \draw (-1, 1) arc (0:360:1cm and 0.3cm);
    \draw (3, 1) arc (0:360:1cm and 0.3cm);
    \draw[very thin] (1,  1) .. controls +(0,-0.5) and + (0, -0.5) .. +(-2,0);
    \draw[very thin] (-1,  -1) .. controls +(0,0.3) and + (0, -0.3) .. +(-2,2);
    \draw[very thin] (1,  -1) .. controls +(0,0.3) and + (0, -0.3) .. +(2,2);
    \node at (0, \yparam) {$\Delta$};
  \end{scope}
  \begin{scope}[xshift = 42cm]
    \draw[densely dotted] (1, -1) arc (0:180:1cm and 0.3cm);
    \draw (1, -1) arc (0:-180:1cm and 0.3cm);
    \draw (1, 1) arc (0:360:1cm and 0.3cm);
    \draw[very thin] (-1, -1) .. controls +(0, 0.3) and + (0, -0.3) ..   (-3,0) .. controls +(0, 0.3) and +(0, -0.3) ..  (-1, 1);
    \draw[very thin] ( 1, -1) .. controls +(0, 0.3) and + (0, -0.3) ..   ( 3,0) .. controls +(0, 0.3) and +(0, -0.3) ..  ( 1, 1);
    \draw[very thin] (0, -0.2) arc (270: 240: 2) coordinate[pos=0.8] (a);
    \draw[very thin] (0, -0.2) arc (270: 300: 2);
    \draw[very thin] (a) arc (114: 66: 2);
    \node at (0, \yparam) {$m\Delta$};
  \end{scope}
  \end{scope}
\end{scope}}}
  \caption{Cobordisms and the Frobenius structure maps dictionary.}
  \label{fig:frob}
\end{figure}
The definition of $\epsilon$ reads diagramatically as follows:
\begin{equation}\label{eq:spheres}
    \cfSphere{0.5}{} =0 \qquad \text{and} \ \  \   
    \cfSphere{0.5}{\bullet} =1.
\end{equation}

The composition of comultiplication and multiplication
\begin{equation}\label{eq_com_mul} 
    m \Delta \ : \ A \lra A\otimes A \lra A , \ 
    m\Delta(1) = m(X_1 \otimes 1 - 1 \otimes X_2)=X_1-X_2
\end{equation}
is an $A$-module map taking  $1$ to the difference  of roots $X_1-X_2$. Consequently, this map is the multiplication by $X_1-X_2$. Thus, a genus one cobordism with one boundary component, when viewed as a cobordism from the empty one-manifold to $\SS^1$, represents the element $X_1-X_2=m\Delta(1)\in A$. 
A two-torus with two boundary components, when viewed as a cobordism between the circles, describes the map $A\lra A$ which is the multiplication by $X_1-X_2$.  

The trace map $\epsilon$ determines an $R$-linear symmetric bilinear form on $A$, 
\[ (,) \ : \ A \otimes A \lra R , \ (a,b) = \epsilon(ab).
\] 
This is a unimodular form on $A$ with  dual bases $(1,X)$ and $(X-E_1,1)$. This pair of dual bases can also be written $(1,X_1)$ and $(-X_2,1)$.  

The identity map on $A$ decomposes in these bases as usual: \begin{equation}\label{eq_id_map}
    a \ = \  X_1\otimes \epsilon(a) - 1 \otimes \epsilon(X_2 a) \ = \  - X_2\otimes \epsilon(a) + 1 \otimes \epsilon(X_1 a), \ a\in A. 
\end{equation}
We can write this identity diagrammatically as the \emph{neck-cutting} relation. 

\begin{equation}\label{eq_neck_cut}
\cfCircleId{0.5}=
\cfCircleSplit{0.5}{\bullet}{} -
\cfCircleSplit{0.5}{}{\circ}
=
\cfCircleSplit{0.5}{}{\bullet} -
\cfCircleSplit{0.5}{\circ}{},
\qquad \text{with} \qquad
\cfDecSquare{0.5}{\circ} :=
E_1\,\cfDecSquare{0.5}{} 
-
\cfDecSquare{0.5}{\bullet} \ . 
\end{equation}

Notice that we use a dot of the second type (hollow dot, a little circle with white area inside) to denote the second root $X_2$, borrowing the idea and notation from \cite{beliakova2019functoriality}.
The following notation may also be useful: 
\begin{equation}
X_{\bullet}:=X_1=X, \ \  X_{\circ}:=X_2=E_1-X,
 \end{equation}
where we use $\bullet$ and $\circ$ as subscripts for root elements instead of $1,2$. 

A solid dot $\bullet$ placed on a vertically positioned annulus denotes the
multiplication by $X_1=X$ operator on $A$, see the rightmost diagram in Figure~\ref{fig:frob}. Defining relation in $A$ translates into the solid dot reduction relation
\begin{equation}\label{eq_dot_red} 
    \cfDecSquare{0.5}{\bullet\,\bullet} = 
    E_1 \ \cfDecSquare{0.5}{\bullet} - E_2 \ 
    \cfDecSquare{0.5}{}
\end{equation}

The dual (or
hollowed) dot ($\circ$) denotes the multiplication by $X_2=E_1-X$. A torus with a boundary component is the difference of a dot and a dual dot on a disk with the same boundary.

\begin{equation}\label{eq_torus} 
\cfGenusOneBdy{0.4} \, = \, \cfCup{0.5}{\bullet} - \cfCup{0.5}{\circ}=
\cfCup{0.5}{\star}
 \quad \text{with} \quad \cfDecSquare{0.5}{\star} = \cfDecSquare{0.5}{\bullet} - \cfDecSquare{0.5}{\circ}.
\end{equation} 
Relation (\ref{eq_dot_red}) holds with solid dot replaced by hollow dot. 
We denote by  $\star$ the difference of the solid and hollow dots, and 
use a similar subscript in $X$: 
\begin{equation}
    X_{\star} :=  X_1-X_2 = X_{\bullet}-X_{\circ} =  2X-E_1 = m\Delta(1) \in A. 
\end{equation}

Note that $X_{\star}$ is in $A$ but not in $R$. 
With  our notations, a torus with one boundary component  equals a disk with a star dot on it, see equation (\ref{eq_torus}). In this sense, having a star dot on a surface is equivalent to adding a handle to it. 

Recall that the discriminant of the quadratic polynomial 
    $x^2-E_1 x + E_2$ is the square of the difference $X_1-X_2$ of roots and belongs to $R$, 
\begin{equation}\label{eq_discriminant} 
        \mcD = (X_1 - X_2)^2 = X_{\ast}^2= E_1^2 - 4 E_2  \ \in R. 
\end{equation}
The degree $\deg(\mcD)=4$. Diagrammatically, 
\begin{equation}\label{eq_torus_2} 
 \cfDecSquare{0.5}{\star^2}\  =\  \cfDecSquare{0.5}{\star\star} \ = \ \mcD \ \ \cfDecSquare{0.5}{}\ .
\end{equation} 
A closed surface of genus two evaluates to $\epsilon(\mcD)=0$, as in~\cite[Section 9.2]{BNTangle}. 
A closed surface of genus three evaluates to  $2\mcD$:

\begin{equation} \label{eq:gen3surface}
\cfGenusThree{0.5}\ =\ 2 \mcD \ .
\end{equation}

Significance of closed genus three surfaces and their evaluation was pointed out by Bar-Natan in~\cite[Section 9.2]{BNTangle}, where the Lee theory was tied to specializing the value of the closed genus three surface to $8$.  

 Likewise, a surface of genus two with one boundary circle equals a disk times $\mcD$, see below and equations (\ref{eq_torus}), (\ref{eq_torus_2}): 
\[
\cfGenusTwoBdy{0.5}\,=\,
\cfGenusTwoCup{0.5}{\bullet}\,=\, \cfCup{0.5}{\star\star} \,=\,\mcD\,\,
\cfCup{0.5}{}.
\]
A closed surface of odd genus $2n+1$ evaluates to $2 \mcD^n$, of even genus $2n$ to zero. A surface of odd genus $2n+1$ with a dot evaluates to $E_1\mcD^n$, of even genus $2n$ with a dot to $\mcD^n$.

We have 
\begin{equation}
    \sigma(X_{\star})= - X_{\star}, \ \ \epsilon(X_{\star})=2, \ \  \Delta(X_{\star}) = X_1\otimes X_1+X_2\otimes X_2.
\end{equation}

Exact sequences of $A$-bimodules 
\begin{equation}
 0 \lra A\{4\} \stackrel{\Delta}{\lra} A\otimes_R A\{2\} \stackrel{\ell_X-r_X}{\lra} A\otimes A \stackrel{m}{\lra} A \lra 0,  
\end{equation}
where $\{k\}$ is the grading shift up by $k$,
glue into an infinite 2-periodic resolution of the identity $A$-bimodule $A$ by free $A$-bimodules 
\begin{equation}
   \dots \stackrel{\ell_{X_1}-r_{X_1}}{\lra}
   A\otimes_R A\{4\}  \stackrel{m \,\Delta}{\lra} A\otimes_R A\{2\} \stackrel{\ell_{X_1}-r_{X_1}}{\lra} A\otimes_R A \stackrel{m}{\lra} A\lra 0 .
\end{equation}

Two-dimensional TQFT $(R,A)$, with its applications to link homology, was introduced by Bar-Natan~\cite{BNTangle}. It's also the theory labelled $(R_5,A_5)$ in~\cite{MKFrobenius}, with parameters $h,t$ there corresponding to $E_1,-E_2$. 
\vspace{0.1in} 

\hypertarget{frobext2}{{\bf 2.}}
Frobenius  extension $(R_{\mcD},A_{\mcD})$ consists of graded rings   
\begin{itemize}
    \item $R_{\mcD}=R[\mcD^{-1}]=\Z[E_1,E_2,\mcD^{-1}]$, where 
    $\mcD$ is the discriminant, see equation (\ref{eq_discriminant}). Since $\deg(\mcD^{-1})=-4$, graded algebra $R_{\mcD}$ is nontrivial in all even degrees. 
    \item $A_{\mcD}=R_{\mcD}[X]/(X^2-E_1 X + E_2)$.
\end{itemize}
Thus, algebra $R_{\mcD}$ is obtained from $R$ by inverting the discriminant element (localizing along it). Algebra $A_{\mcD}= A \otimes_R R_{\mcD}$, so that the Frobenius extension $(R_{\mcD},A_{\mcD})$ is obtained from $(R,A)$ by inverting the discriminant $\mcD$. 
Algebra $A_{\mcD}$ is a free $R_{\mcD}$-module of rank two with a homogeneous basis $\{1,X\}$. 
Formulas (\ref{eq_trace_map}) and (\ref{equation_comult}) for the trace and the comultiplication hold in this Frobenius extension,  as well as all other formulas derived above for the extension $(R,A)$.

Since $\mcD$ is invertible of degree $4$, there is an  isomorphism  of graded free $R_{\mcD}$-modules $R_{\mcD}\cong R_{\mcD}\{4\}$, so that in the Grothendieck group of graded modules one can work at most over $\Z[q]/(q^4-1)$ rather than over $\Z[q,q^{-1}]$ for graded $R$-modules. 

\vspace{0.07in} 

{\it Separability:} Given a unital homomorphism $\psi: S \lra T$ of commutative rings, one can form the enveloping algebra $T^e=T\otimes_S T$, acting on $T$ by  
\[ (t_1\otimes t_2)t= t_1 t t_2
\] 
and a homomorphism of $T^e$-modules 
\[ m : T\otimes_S T \lra T, \ \ m(a\otimes b) = ab.
\]
Homomorphism $\psi$ is called \emph{separable} if there exists a 
homomorphism $\mu:  T\lra T\otimes_S T$ of $T^e$-modules such that 
$m \circ \mu = \mathrm{Id}_T,$
\[ m\circ \mu  \ : \ T  \lra T\otimes_S T \lra T.
\] 

\emph{Remark:} Separability can be defined for noncommutative $S$-algebras $T$ as well, changing the notion of  the enveloping algebra to $T^e=T\otimes_S T^o,$ where $T^o$ is the opposite algebra of $T$. 

Separability is equivalent to the existence of an idempotent $e\in T\otimes_S T$ such that $m(e)=1$ and $e u=0$ for any $u\in\ker(m)$.
Such $e$ is called a \emph{separability idempotent}. Separability is also equivalent to the condition that the exact sequence of $T^e$-modules 
\[ 0 \lra \ker(m) \lra T\otimes_S T \stackrel{m}{\lra} T \lra 0 
\] 
splits. $T^e$-submodule $\ker(m)$ is generated by the complementary idempotent $1-e$, in the separable case. We refer to \cite{Kadison, FordBook} for more information on separable extensions. 

\begin{prop} The ring extension $R_{\mcD}\lra A_{\mcD}$ is separable.
\end{prop}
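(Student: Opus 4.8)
The plan is to produce an explicit separability idempotent $e\in A_{\mcD}\otimes_{R_{\mcD}}A_{\mcD}$, i.e.\ an element with $m(e)=1$ and $e\,u=0$ for every $u\in\ker(m)$; idempotency of such an $e$ is then automatic, since $m(e-1)=0$ puts $e-1$ in $\ker(m)$, whence $e^{2}-e=e(e-1)=0$, and the proposition follows from the characterization of separability recalled above. Equivalently, $\mu(a):=(a\otimes 1)\,e$ defines a homomorphism $\mu\co A_{\mcD}\lra A_{\mcD}\otimes_{R_{\mcD}}A_{\mcD}$ of modules over $A_{\mcD}\otimes_{R_{\mcD}}A_{\mcD}$ with $m\circ\mu=\mathrm{Id}$, well defined because $e$ annihilates $X\otimes 1-1\otimes X\in\ker(m)$ and hence $(m(t)\otimes 1-t)e=0$ for all $t$.

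The candidate, dictated by the classical separability idempotent $\tfrac{f(x)-f(y)}{(x-y)f'(x)}$ for $f(x)=x^{2}-E_{1}x+E_{2}$ together with $f'(X)=2X-E_{1}=X_{\star}$ and $X_{\star}^{2}=\mcD$, is
\[
e\ :=\ \mcD^{-1}\,\Delta(1)^{2}\ =\ \mcD^{-1}\bigl(X_{1}\otimes 1-1\otimes X_{2}\bigr)^{2}\ =\ \mcD^{-1}\bigl(X\otimes 1+1\otimes X-E_{1}\bigr)^{2},
\]
the square taken inside the commutative ring $A_{\mcD}\otimes_{R_{\mcD}}A_{\mcD}$; the factor $\mcD^{-1}$ is legitimate precisely because the discriminant has been inverted, and $e$ sits in internal degree $0$, as a separability idempotent must.

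Two points remain to check. First, $m(e)=1$: by~(\ref{eq_com_mul}) we have $m\Delta(1)=X_{1}-X_{2}=X_{\star}$, so $m(\Delta(1)^{2})=X_{\star}^{2}=\mcD$ by~(\ref{eq_discriminant}) and $m(e)=\mcD^{-1}\mcD=1$. Second, $e$ kills $\ker(m)$: since $A_{\mcD}\otimes_{R_{\mcD}}A_{\mcD}$ is free of rank four over $R_{\mcD}$ on $\{1\otimes 1,\,X\otimes 1,\,1\otimes X,\,X\otimes X\}$, a direct computation identifies $\ker(m)$ with the ideal generated by $\delta:=X\otimes 1-1\otimes X$, so it is enough to verify $e\,\delta=0$, and for that it suffices that $\Delta(1)\,\delta=0$. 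This single identity is the only computation with any content: expanding $(X\otimes 1+1\otimes X-E_{1})(X\otimes 1-1\otimes X)$ and rewriting $(X\otimes 1)^{2}=(E_{1}X-E_{2})\otimes 1$ and $(1\otimes X)^{2}=1\otimes(E_{1}X-E_{2})$ via the defining relation of $A_{\mcD}$, all terms cancel. Granting this, $e$ is a separability idempotent and $R_{\mcD}\lra A_{\mcD}$ is separable.

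I do not expect a genuine obstacle: the whole argument collapses to a one-line verification once the correct $e$ is written down. The only conceptual point to isolate is that inverting $\mcD$ — equivalently, making $X_{\star}=f'(X)$ invertible — is exactly what creates the needed denominator, which is why $(R_{\mcD},A_{\mcD})$ and not $(R,A)$ is the separable vertex of the cube; indeed, running the same rank-four computation over $R$ shows that a separability idempotent for $(R,A)$ could exist only if $\mcD\in R^{\times}$, which it is not.
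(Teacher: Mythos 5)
Your proof is correct and is essentially the paper's argument: you produce the same separability idempotent $e=\mcD^{-1}\Delta(1)^{2}=\mcD^{-1}\Delta(X_{\star})$, the paper obtaining it as $\Delta_{\mcD}(1)$ for the rescaled splitting $\Delta_{\mcD}=\ell_{X_{\star}^{-1}}\Delta$ of $m$ and deducing the required properties from $\Delta$ being an $A_{\mcD}^{e}$-module map, while you verify $m(e)=1$ and $e\cdot\ker(m)=0$ by the equivalent direct computation $\Delta(1)\,(X\otimes 1-1\otimes X)=0$. (Minor aside: your expansion is the reliable one --- the paper's displayed formula $\Delta(X_{\star})=X_{1}\otimes X_{1}+X_{2}\otimes X_{2}$ drops a $-2E_{2}\,1\otimes 1$ term, which does not affect either argument.)
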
 

\begin{proof} Specializing the above notations to our case, we have the algebra 
\[
A_{\mcD}^e = A_{\mcD} \otimes_{R_{\mcD}} A_{\mcD}.
\] 
Recall that  $X_{\ast}=X_1-X_2$  is the difference of roots, denoted by star dot $\star$ on a surface. From equation (\ref{eq_discriminant}) 
we see that $\pm X_{\ast}$ are the two square roots in $A$ of the discriminant $\mcD\in R$. 

With $\mcD$ invertible, we can define the inverse of the $\star$ dot as the ratio $X_{\star}/\mcD,$
\begin{equation}\label{eq_inv_star}
\cfDecSquare{0.5}{\star^{-1}} = \mcD^{-1}\ 
\cfDecSquare{0.5}{\star}
\end{equation}
This diagram describes $X_{\star}^{-1} = \mcD^{-1} X_{\star}\in A_{\mcD}$. 
Since $m\Delta=(X_1-X_2)\Id_{A_{\mcD}}=X_{\star}\Id_{A_{\mcD}}$, see equations (\ref{eq_com_mul}) and (\ref{eq_torus}), and $X_{\star}$ is invertible in $A_{\mcD}$, we can rescale the comultiplication to the map 
\begin{equation}
\label{eq_delta_d}\Delta_{\mcD} \ := \ \ell_{X_{\star}}\, \mcD^{-1}\, \Delta = \ell_{X_{\star}^{-1}}\, \Delta \ : \ 
A_{\mcD} \lra A_{\mcD}^e 
\end{equation}
to split the short exact sequence 
\begin{equation} \label{eq_short_ex} 
0 \lra \ker(m) \lra A_{\mcD}\otimes_{R_{\mcD}} A_{\mcD} \stackrel{m}{\lra} A_{\mcD} \lra 0 .
\end{equation}
Here $\ell_{X_{\star}}$ denotes the left multiplication by $X_{\star}$ on 
$A_{\mcD}^e$, but the right multiplication by the same element works as well, since $\Delta$ is a homomorphism of $A_{\mcD}^e$-modules.

Diagrammatically, $\Delta_{\mcD}$ is given by dotting the copants with the star inverse,  
\begin{equation}\label{eq_copants_delta}
    \Delta_{\mcD}=  \cfDelta{0.5}{\star^{-1}}=
    \mcD^{-1} \cfDelta{0.5}{\star},   
\end{equation}
Note that placing a star dot on a surface is equivalent to adding a handle, see (\ref{eq_torus}). One can think of the inverse $\star^{-1}$ on a surface as an \emph{antihandle} (of genus minus one). Informally,  $\Delta_{\mcD}$ is given by copants of genus $(-1)$. In the theory $(R_{\mcD},A_{\mcD})$ we can turn a connected component of a surface into a "negative genus" surface by placing a suitable negative power of the $\star$ dot on it. 

We have $m\Delta_{\mcD}=\mathrm{id}_{A_{\mcD}}$. Diagrammatically, $m\Delta$ adds a handle to the annulus representing the identity map of $\mathrm{id}_{A_{\mcD}}$, the state space of the circle. Presence of the star dot inverse on the surface for $\Delta_{\mcD}$ cancels out that handle. 

The separability idempotent is 
\begin{align}\label{eq:idemp-sep}
e &= \Delta_{\mcD}(1) = \mcD^{-1}\, \Delta(X_{\star}) = 
\mcD^{-1}(X_1\otimes X_1 + X_2 \otimes X_2) \\
  &=\mcD^{-1}(X_1\otimes 1 - 1 \otimes
  X_2)^2 = \mcD^{-1} \Delta(1)^2 \in A_{\mcD}^e. \nonumber \qedhere
\end{align}
\end{proof} 

Diagrammatically, $e$  is a genus one  cobordism from the empty 1-manifold to the union of two circles times the scalar $\mcD^{-1}$. The handle in the genus one cobordism can be substituted by $X_{\star}$, see below, writing $e$ as tube cobordism decorated by $\star$ dot times $\mcD^{-1}$. Equivalently, we can write $e$ as a tube with $\star^{-1}$ dot on it, thus a 'genus minus one' cobordism with two boundary circles at the top.   
\[
e= \mcD^{-1} 
\cfTubeTT{0.5}{\star} = \cfTubeTT{0.5}{\star^{-1}} 
\]

Idempotent $e$ gives a factorization
\[
A_{\mcD}^e = A^e_{\mcD}e \times   A^e_{\mcD}(1-e) \cong A_{\mcD}
\times A_{\mcD}
\]
of the algebra $A_{\mcD}^e$ into the direct product of two copies of $A_{\mcD}$. 

Idempotent $e$ is \emph{strongly separable}, that is, fixed by the involution of $A^e_{\mcD} $ which permutes the two factors in the tensor product \cite[section 5.3]{Kadison}. This is due to $\Delta_{\mcD}(1)$ being invariant under this involution. 

\vspace{0.05in} 

{\it Remark:}
Separability is closely related to vanishing of higher Hochschild homology groups. For a  separable extension $S\subset T$ and any $T$-algebra $A$, there is an isomorphism $\mathrm{HH}^T_n(A)\cong \mathrm{HH}_n^S(A)$, see \cite[Theorem 1.2.13]{Loday}. 
Hochschild had shown \cite{Hochschild}
 that an extension $k\subset T$ of a field $k$ is separable iff the Hochschild homology $\mathrm{HH}_1(T,M)=0$ for all $T$-bimodules $M$. Hochschild homology plays an important role in link homology, starting with the work of Przytycki~\cite{JPKhoHomHochHom} on the relation between Hochschild homology and the homology of $(2,n)$-torus links. 

\vspace{0.05in} 

Our original extension $R\lra A$ is not separable, since it's easy to check that the multiplication map $m:A\otimes A \lra A$ does not split.

\vspace{0.1in} 

\hypertarget{frobext3}{{\bf 3.}}
The third extension $(R_{\alpha},A_{\alpha})$ consists of graded rings   
\begin{itemize}
    \item $R_{\alpha} = \Z[\alpha_1,\alpha_2]$, $\deg(\alpha_1)=\deg(\alpha_2)=2$, 
    \item $A_{\alpha} = R_{\alpha}[X]/((X-\alpha_1)(X-\alpha_2))$, $\deg(X)=2$. 
\end{itemize}
The involution $\alpha_1 \leftrightarrow \alpha_2$, denoted $\sigma_{\alpha}$ and  induced by the permutation of indices,  acts on $R_{\alpha}$, and we  identify the subring of invariants under this involution  with the ring  $R=\Z[E_1,E_2]$ via identifications $E_1= \alpha_1+\alpha_2, E_2 =\alpha_1\alpha_2$. Under the inclusion $R\subset R_{\alpha}$, the latter is a free graded $R$-module of rank $2$ with a homogeneous basis $\{ 1,\alpha_1\}$, for example. 

Recall that $A$ is a graded $R$-algebra as well. Graded $R$-algebras $A$ and $R_{\alpha}$ are isomorphic, and there are two possible isomorphisms, taking $X\in A$ to either $\alpha_1$ or $\alpha_2$ in $R_{\alpha}$. Notice that $X=X_1,E_1-X=X_2$ in $A$ and $\alpha_1, \alpha_2$ in $R_{\alpha}$ are roots of the polynomial  
$y^2-E_1y+E_2$ with coefficients in $R$. Any $R$-algebra isomorphism $A \cong R_{\alpha}$ will take roots $X_1,X_2$ to roots $\alpha_1,\alpha_2$ in some order. 
Despite the existence of these isomorphisms, rings $A$ and $R_{\alpha}$  have different origins and carry different topological interpretations in our story.  This pair of isomorphisms between $A$ and $R_{\alpha}$ does not immediately carry over to the topological side.   

We can rewrite the defining relation in $A_{\alpha}$ as 
\[ 0=(X-\alpha_1)(X-\alpha_2) = X^2 - E_1 X + E_2, 
\] 
recognizing the defining relation in $A$, but over a larger ground ring, to
 obtain a canonical isomorphism 
\begin{equation}\label{eq_A_alpha} 
 A_{\alpha} \ = \ R_{\alpha}\otimes_R A,
\end{equation}
which can be taken as the definition of $A_{\alpha}$. 

Elements $X_1,X_2\in A$ and $\alpha_1,\alpha_2\in R_{\alpha}$ are roots in $A_{\alpha}$ of $y^2-E_1y+E_2$. Furthermore, we have 
\[ 
X_1+X_2 = \alpha_1 + \alpha_2=E_1, \ X_1X_2 = \alpha_1\alpha_2=E_2.
\] 
The four elements $X_i-\alpha_j$, $i,j\in\{1,2\}$, are zero divisors in $R_{\alpha}$: 
\[ (X_1-\alpha_1)(X_1-\alpha_2) = (X_2-\alpha_1)(X_2-\alpha_2) = 0.
\] 
Up to a sign, that's only two zero divisors, since $X_2-\alpha_1= - (X_1 -\alpha_2)$ and $X_2-\alpha_2= - (X_1 -\alpha_1).$

The comultiplication map can be simply written using $X-\alpha_i$, $i=1,2$:  
\begin{eqnarray*}
\Delta(1) & = & (X-\alpha_1) \otimes 1 + 1 \otimes (X-\alpha_2) = (X-\alpha_2) \otimes 1 + 1 \otimes (X-\alpha_1), \\
    \Delta(X-\alpha_1) & = & (X-\alpha_1)\otimes (X-\alpha_1), \\
    \Delta(X-\alpha_2) & = & (X-\alpha_2)\otimes (X-\alpha_2).
\end{eqnarray*}
We can think of $X-\alpha_i$ as shifted dots and denote them on diagrams by a small circle with $i$ in it, $\circled{1}$ and $\circled{2}$. We display below some skein relations on shifted dots. 
In particular, the comultiplication formulas can be interpreted as simple neck-cutting relations.  

\begin{gather*}
\cfCircleId{0.5} =
\cfCircleSplit{0.5}{\circled{1}}{}
+
\cfCircleSplit{0.5}{}{\circled{2}}
=
\cfCircleSplit{0.5}{\circled{2}}{}
+
\cfCircleSplit{0.5}{}{\circled{1}}, \qquad\quad
\cfCircleId[\circled{1}]{0.5}=
\cfCircleSplit{0.5}{\circled{1}}{\circled{1}}, \qquad\quad
\cfCircleId[\circled{2}]{0.5}=
\cfCircleSplit{0.5}{\circled{2}}{\circled{2}}, \\[4pt]
\quad
\cfGenusOneBdy{0.5}= \cfCup{0.5}{\circled{1}} + \cfCup{0.5}{\circled{2}},
\qquad\qquad
\cfSphere{0.6}{\circled{1}} =
\cfSphere{0.6}{\circled{2}} = 1, \\[4pt]
\cfDecSquare{0.5}{\circled{1}\circled{2}} =0,\qquad\quad
\cfDecSquare{0.5}{\circled{1}\circled{1}}=
(\alpha_2-\alpha_1)\cfDecSquare{0.5}{\circled{1}}, \qquad\quad
\cfDecSquare{0.5}{\circled{2}\circled{2}}=
(\alpha_1-\alpha_2)\cfDecSquare{0.4}{\circled{2}}.
\end{gather*}
It may also be convenient to denote $X_{\circled{1}}=X-\alpha_1, X_{\circled{2}}=X-\alpha_2 \in A_{\alpha}$. Relations above can be rewritten algebraically, for example $X_{\circled{1}}X_{\circled{2}}=0.$

Then $X_{\star}=X_{\circled{1}}+X_{\circled{2}}$, that is, 
$ \cfDecSquare{0.5}{\star} = \cfDecSquare{0.5}{\circled{1}} + \cfDecSquare{0.5}{\circled{2}}.$

In the ring $R_{\alpha}$ discriminant factorizes, 
$\mcD= (\alpha_1 - \alpha_2)^2,$ which is just like its factorization $\mcD=(X_1-X_2)^2=X_{\star}^2$ in the isomorphic ring $A$. By analogy, one may also denote $\alpha_{\star}=\alpha_1-\alpha_2$. 

\vspace{0.1in} 

{\it Involutions on $R_{\alpha}$ and $A_{\alpha}$.}
Compared to $R$, the ring $R_{\alpha}$ has an additional symmetry, an involution $\sigma_{\alpha}$, mentioned above, permuting $\alpha_1$ and $\alpha_2$. 
This $R$-linear 
involution extends to $A_{\alpha}$  transposing the roots 
$\alpha_1, \alpha_2$ and fixing $X$:  
\[ \sigma_{\alpha}(\alpha_1)=\alpha_2, \ 
\sigma_{\alpha}(\alpha_2)=\alpha_1, \
\sigma_{\alpha}(X)=X. 
\] 
$\sigma_{\alpha}$ fixes the subalgebra $R$ and permutes $X_{\circled{1}}$ and $X_{\circled{2}}$,
\begin{equation}
    \sigma_{\alpha}(X_{\circled{1}}) = X_{\circled{2}}, \ \ \sigma_{\alpha}(X_{\circled{2}}) = X_{\circled{1}}. 
\end{equation}

We can extend $R$-algebra involution $\sigma$ on $A$ to an $R_{\alpha}$-algebra involution of $A_{\alpha}$, also denoted $\sigma$, with 
\[ \sigma (X -\alpha_1) = \alpha_2 -X, \ 
\sigma(X-\alpha_2) = \alpha_1 - X,
\ \sigma(1)= 1. 
\] 
 Algebra involutions $\sigma$ and $\sigma_{\alpha}$ of $A_{\alpha}$ commute, and their composition 
$\sigma\sigma_{\alpha}$ is an $R$-linear involution which negates shifted dots,
\[ \sigma \sigma_{\alpha} (X_{\circled{1}}) =   -X_{\circled{1}}, \ 
\sigma \sigma_{\alpha}(X_{\circled{2}}) =  -X_{\circled{2}}. 
\] 

We write down a summary of these involutions of algebra $A_{\alpha}$ in Table~\ref{tab:sigmas}.

\begin{table}[ht]
\centering
 \begin{tabular}{| c | c | c | c | c | c | c | c |} 
 \hline
 Involution & linearity & $\alpha_1$ \rule{0pt}{2.5ex}& $\alpha_2$ & $X_1$ & $X_2$ & $X_{\circled{1}}$ &  $X_{\circled{2}}$ \\ [0.5ex] 
 \hline 
 $\sigma$ \rule{0pt}{2.5ex} & $R_{\alpha}$-linear & $\alpha_1$ & $\alpha_2$ & $X_2$ & $X_1$ & $-X_{\circled{2}} $ & $X_{\circled{1}}$ \\ [0.5ex] 
 \hline
 $\sigma_{\alpha} $ \rule{0pt}{2.5ex}& $A$-linear & $\alpha_2$ & $\alpha_1$ & $X_1$ & $X_2$ & $X_{\circled{2}}$ &  $X_{\circled{1}}$ \\ 
 [0.5ex] 
 \hline
 $\sigma\sigma_{\alpha} \rule{0pt}{2.5ex}$ & $R$-linear & $\alpha_2$ & $\alpha_1$ & $X_2$ & $X_1$ & $-X_{\circled{1}}$ &  $-X_{\circled{2}}$ \\
 [0.7ex] 
 \hline
\end{tabular}

\vspace{0.2cm}
\caption{Summary of involutions $\sigma$, $\sigma_\alpha$ and $\sigma\sigma_\alpha$.}
\label{tab:sigmas}
\end{table}
It may also be convenient to relabel $\sigma$ into $\sigma_X$, by analogy with $\sigma_{\alpha}$. 

\vspace{0.1in} 

\emph{Equivariant cohomology:} Passing from $(R,A)$ to $(R_{\alpha},A_{\alpha})$ corresponds to passing from $U(2)$-equivariant cohomology of a point and a 2-sphere to $U(1)\times U(1)$-equivariant cohomology of these topological spaces, where $U(1)\times U(1)$ is a maximal torus in $U(2)$. Group $U(2)$ acts on $\SS^2$ via the identification of the latter with $\mathbb{CP}^1$ and descending from the standard action of $U(2)$ on $\CC^2$.  

\vspace{0.2in} 

\hypertarget{frobext4}{{\bf 4.}}
The fourth extension $(R_{\alpha\mcD},A_{\alpha\mcD})$ consists of graded rings   
\begin{itemize}
    \item $R_{\alpha\mcD} = \Z[\alpha_1,\alpha_2,(\alpha_1-\alpha_2)^{-1}]$, $\deg(\alpha_1)=\deg(\alpha_2)=2$, 
    \item $A_{\alpha\mcD} = R_{\alpha\mcD}[X]/((X-\alpha_1)(X-\alpha_2))$, $\deg(X)=2$. 
\end{itemize}
This extension is obtained from the third extension $(R_{\alpha},A_{\alpha})$ by inverting the discriminant $\mcD= (\alpha_1-\alpha_2)^2.$ Equivalently, one can invert $\alpha_1-\alpha_2$. 
An element of $A_{\alpha\mcD}$ can be written   uniquely as $f_1\cdot 1 + f_2\cdot X$ where $f_1,f_2\in R_{\alpha\mcD}$. 
Let 
\begin{equation}\label{eq_idemp}
    e_1 \ = \frac{X-\alpha_1}{\alpha_2-\alpha_1}, 
    \ \ e_2 \ = \frac{X-\alpha_2}{\alpha_1-\alpha_2}.
\end{equation}
These two elements of $A_{\alpha\mcD}$  are mutually-orthogonal complementary idempotents, 
\begin{equation}
    1= e_1 + e_2, \ e_1^2 = e_1, \ e_2^2=e_2, \ e_1 e_2 = e_2 e_1 = 0. 
\end{equation}
Consequently, the ring $A_{\alpha\mcD}$ decomposes as the direct product 
\begin{equation}\label{eq_dir_prod} 
    A_{\alpha\mcD} \ = \ R_{\alpha\mcD}e_1 \times R_{\alpha\mcD}e_2. 
\end{equation}
The entire Frobenius algebra structure decouples as well:  
\begin{eqnarray}
    & & \Delta(e_1) \ = \ (\alpha_2-\alpha_1)\cdot e_1\otimes e_1   , \ \ \Delta(e_2) \ = \ (\alpha_1-\alpha_2)\cdot e_2\otimes e_2  ,\\
    & & \epsilon(e_1)= (\alpha_2-\alpha_1)^{-1}, \ \ \epsilon(e_2)= (\alpha_1-\alpha_2)^{-1}. 
\end{eqnarray}
Both $X_{\star}=X_1-X_2$ and $\alpha_{\star}$ are invertible in $A_{\alpha\mcD}$, since both square to $\mcD$. The ratio 
\[
\frac{X_{\star}}{\alpha_{\star}}= X_{\star}\alpha_{\star}^{-1}= \frac{2X-E_1}{\alpha_1-\alpha_2}= e_2-e_1 
\]
is a degree zero invertible element other than $-1$ that squares to $1$. Elements 
\[ 
\{ 1, X_{\star}\alpha_{\star}^{-1}, -1, 
- X_{\star}\alpha_{\star}^{-1} \}  = \{ 1, e_2-e_1,-1, e_1-e_2 \}
\] 
constitute a subgroup of degree zero invertible elements in $A_{\alpha\mcD}$ isomorphic to $\Z/2\times \Z/2$. Since the ring $A_{\alpha\mcD}$ is not an integral domain, it may contain a finite non-cyclic subgroup of invertible elements. Since $X_{\star}$ is invertible, it can be put in the denominator instead of $\alpha_{\star}$ to recover the same idempotents: 
\begin{equation}
    e_1 \ = \frac{\alpha_1-X_1}{X_2-X_1}, 
    \ \ e_2 \ = \frac{\alpha_1-X_2}{X_1-X_2}.
\end{equation}

\vspace{0.1in}

\subsection{Summary of basic properties} In Figure~\ref{fig_bigger-cube} we redraw the cube of Frobenius extensions, adding some information about the rings involved. 

\begin{figure}
    \centering
\NB{\tikz[yscale =1.06, xscale=1.06]{\input{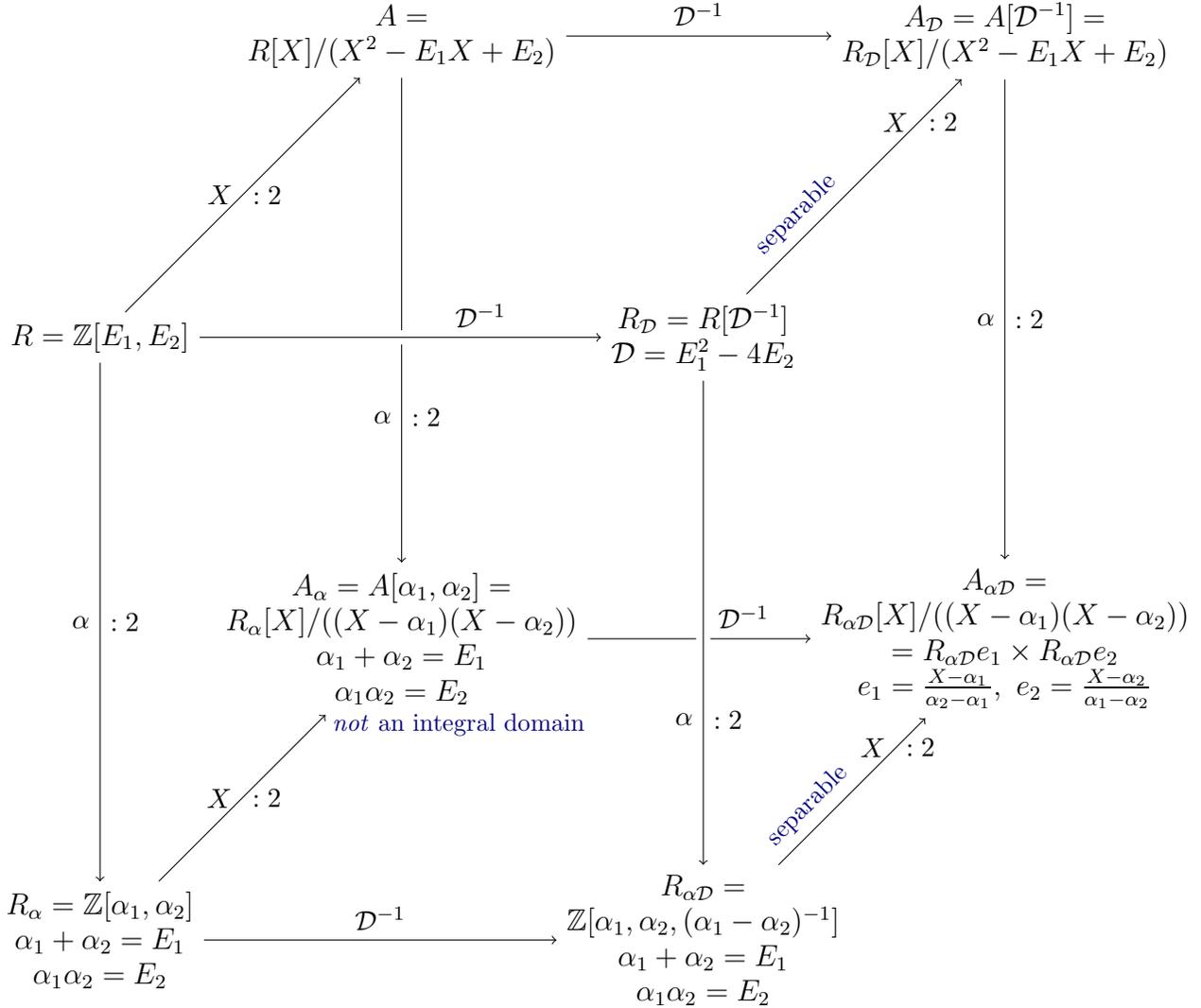}}}
    \caption{Cube of the four Frobenius extensions, with the rings defined}\label{fig_bigger-cube}
\end{figure}

Let us summarize this diagram. In it, three of the Frobenius extensions listed in (\ref{eq_4_extensions}) are obtained by base changes from the original extension $(R,A)$: 
\begin{equation} 
(R_{\alpha},A_{\alpha})\cong R_{\alpha}\otimes_R(R,A), \   (R_{\mcD},A_{\mcD})\cong R_{\mcD}\otimes_R(R,A), \ (R_{\alpha\mcD},A_{\alpha\mcD})\cong R_{\alpha\mcD}\otimes_R(R,A). 
\end{equation} 
The original extension $(R,A)$ is given by the upper left diagonal arrow. 

\vspace{0.05in} 

{\it Diagonal arrows: }
Diagonal arrows in the diagram, pointing northeast, denote inclusions $R_{\ast}\subset A_{\ast}$ for each of the four Frobenius extensions (\ref{eq_4_extensions}). In each case $A_{\ast}$ is a free $R_{\ast}$-module of rank two with a basis $\{1,X\}$. The rank and the additional basis (and generating) element $X$ are indicated by writing "$:2$" and "$X$" next to the arrows. 

\vspace{0.05in} 

{\it Horizontal arrows:} The four horizonal arrows correspond to localizing each of the four commutative rings $R,A,R_{\alpha},A_{\alpha}$ in the vertices of the left square facet of the cube by inverting the discriminant $\mcD\in R$. Each arrow is the inclusion of one of these four rings into the localized ring. We put $\mcD^{-1}$ by these arrows to indicate this operation. The two Frobenius extensions on the right side of the diagram are separable, but not the two on the left. 

\vspace{0.05in} 

{\it Downward arrows:}
The four vertical downward arrows correspond to enlarging each of the four rings in the vertices of the top square by adding roots $\alpha_1,\alpha_2$ of polynomial $y^2-E_1 y + E_2$. This extension of rings  is a rank two extension, with the larger ring a free rank two module over the smaller ring with a basis $\{1,\alpha_i\}$ for $i\in \{1,2\}$. We indicated this transformation by writing "$\alpha$" and "$:2$" next to each vertical arrow. In particular, the leftmost vertical arrow denotes the inclusion 
\[ R = \Z[E_1,E_2]  \ \subset R_\alpha = \Z[\alpha_1,\alpha_2], \  \alpha_1+\alpha_2 = E_1, \ \alpha_1\alpha_2 = E_2. 
\] 
Thus, downward vertical arrows denote an extension where we tensor each of the  four rings $R,A,R_{\mcD},A_{\mcD}$ with $R_\alpha$ over $R$. 
In particular, 
\[ A_{\alpha} = A\otimes_R R_{\alpha}, \ \
    R_{\alpha \mcD}= R_{\mcD}\otimes_R R_{\alpha}, \ \ 
    A_{\alpha \mcD} = A_{\mcD}\otimes_R R_{\alpha}.
\] 
As we've mentioned, $R_{\alpha}$ is a free rank two graded $R$-module with a basis $\{1,\alpha_1\}$ (or $\{1,\alpha_2\}$). In fact, $(R,R_{\alpha})$ is a rank two Frobenius extension isomorphic to the extension $(R,A)$. There are two $R$-algebra isomorphisms $A\cong R_{\alpha}$: one of them takes $X$ to $\alpha_1$ and, necessarily, $E_1-X$ to $\alpha_2$, while the other takes the roots $(X,E_1-X)$ of $x^2-E_1x +E_2$ to $(\alpha_2,\alpha_1)$, correspondingly. We will not be using these isomorphisms in the paper. 

\vspace{0.05in}

{\it Zero divisors and idempotents:} Ring $A_{\alpha}$ contains zero divisors $\pm(X-\alpha_1),$ $\pm(X-\alpha_2)$, with $(X-\alpha_1)(X-\alpha_2)=0$. In particular, it's not an integral domain. Additionally, with the determinant inverted, these zero divisors rescale into idempotents $e_1,e_2$ in $A_{\alpha\mcD}$, where they decompose the  ring  into the direct product of two copies of the  ground ring $R_{\alpha\mcD}$. 

\vspace{0.05in}

{\it Pushouts:} Rings $R_{\alpha\mcD}$ and $A_{\alpha\mcD}$, together with the maps into them in the front and back squares of the diagram in Figures~\ref{fig:first-cube} and~\ref{fig_bigger-cube}, are pushouts (and colimits) of commutative ring diagrams 
\begin{equation}
 R_{\alpha}\longleftarrow R\lra R_{\mcD}, \ \ \ A_{\alpha}\longleftarrow A\lra A_{\mcD},
\end{equation}
correspondingly. In the extension $R\subset R_{\alpha}$, the ring $R_{\alpha}$ is free of rank two as an  $R$-module. In particular, this is a flat and a faithfully flat extension, ditto for $A\subset A_{\alpha}$. 
The inclusion $R\subset R_{\mcD}$ is a localization of commutative rings and thus a flat extension. 
The back square of the four $A_{\ast}$'s in the diagram is obtained via the base change $R\lra A$ from the front square pushout of commutative rings $R_{\ast}$'s.

\vspace{0.05in}

{\it Diagonal plane symmetries of the cube:} As mentioned earlier, rings $A$ and $R_{\alpha}$ are isomorphic as $R$-algebras, via two possible algebra automorphisms that take $X$ to $\alpha_1$ or $\alpha_2$ and $E_1-X$ to $\alpha_2$ or $\alpha_1$, correspondingly: 
\begin{eqnarray*}
& & \tau_1: A\stackrel{\cong}{\lra} R_{\alpha}, \ \ 
 \tau_1(X) = \alpha_1, \ \tau_1(E_1-X) =\alpha_2 , \\
& & \tau_2: A\stackrel{\cong}{\lra} R_{\alpha},\ \ \tau_2(X) = \alpha_2, \ \tau_2(E_1-X) =\alpha_1 .
\end{eqnarray*}
Composition $\tau_1\tau_2^{-1}=\tau_2\tau_1^{-1}$ is an $R$-linear automorphism of $R_{\alpha}$ that transposes $\alpha_1,\alpha_2$. Likewise, 
$\tau_1^{-1}\tau_2=\tau_2^{-1}\tau_1$ is an $R$-linear automorphism of $A$ that transposes $X_1=X$ and $X_2=E_1-X$. 

Ring $A_{\alpha}$ is a free $R$-module of rank four with a basis, for instance, $\{1,X,\alpha_1,\alpha_1 X\}$. Isomorphisms $\tau_1,\tau_1^{-1}$ extend to a algebra involution  of $A_{\alpha}$, also denoted $\tau_1$, given by  
\begin{equation}
    \tau_1(\alpha_1)=X, \ \tau_1(X)=\alpha_1, \ \tau_1(\alpha_2)= E_1-X, \ 
    \tau_1(E_1-X) =\alpha_2, 
\end{equation}
or, in the other notation, 
\begin{equation}
\tau_1: \alpha_1\leftrightarrow X_1, \ \alpha_2 \leftrightarrow X_2. 
\end{equation}
Thus, $\tau_1$ adds sign to $X-\alpha_1$ but fixes $X-\alpha_2$ in $A_{\alpha}$. It acts with eigenvalue $-1$ on the subspace  $R(X-\alpha_1)$ and  with eigenvalue $1$ on $R(X-\alpha_2)$. Likewise, $\tau_2(X-\alpha_1)=X-\alpha_1$ and $\tau_2(X-\alpha_2)=-(X-\alpha_2)$.  

Isomorphisms $\tau_1,\tau_2$ of $A_{\alpha}$ extend to isomorphisms from $A_{\mcD}$ to $R_{\alpha\mcD}$ and to automorphisms of $A_{\alpha\mcD}$. They also restrict to identity automorphisms of both $R$ and $R_{\mcD}$. 

In this way, they act on the entire cube in Figure~\ref{fig_bigger-cube}, as automorphisms of algebras $R,A_{\alpha},R_{\mcD}$, and $A_{\alpha\mcD}$ (in the four vertices of the cube that lie on a plane through two horizontal edges),  isomorphisms between $A$ and $R_{\alpha}$ and isomorphisms between $A_{\mcD}$ and $R_{\alpha\mcD}$.  

\vspace{0.05in}

{\it Properties of link homology for these four extensions.} 

Given an oriented link $L$ and its diagram $D$ with $n$ crossings, we can  construct $2^n$ resolutions of $D$ into diagrams of planar circles and then use the Frobenius pair $(R,A)$ and associated 2-dimensional TQFT to build a commutative cube with tensor powers of $A$ (with grading shifts) placed in the vertices. Collapsing into a complex, we obtain a complex $C(D)$ of free graded $R$-modules. Choosing a base point on $D$ which is not a crossing turns $C(D)$ into a complex of graded $A$-modules. Homology $\mtH(D)$ of $C(D)$ is a bigraded $R$-module, which is almost never free. With a choice of base point, $\mtH(D)$ is a graded $A$-module. 

To the Reidemeister moves $D_1\sim D_2$ one assigns chain homotopy equivalences $C(D_1)\cong C(D_2)$ between complexes of free graded $R$-modules, with induced isomorphisms on bigraded homology group $\mtH(D_1)\cong \mtH(D_2)$. With a base point and a Reidemeister move away from the base point, homotopy equivalences and homology isomorphisms become that of $A$-modules. For  a $k$-component unlink $U_k$, the homology  $\mtH(U_k)\cong A^{\otimes k}$, a free $A$-module of rank $2^{k-1}$. 

For two basepoints $p_1,p_2$ located on the same component and separated by a single crossing, multiplication maps on $C(D)$ by $X_1$ at $p_1$ and by $X_2$ at $p_2$ are  chain homotopic. 

\vspace{0.1in} 

Base changes from $R$ to $R_{\alpha}$, $R_{\mcD}$, $R_{\alpha\mcD}$ allow to define chain complexes and the corresponding homology groups 
\begin{eqnarray}  
    C_{\alpha}(D) & = & R_{\alpha}\otimes_R C(D) , \ \ \ \mtH_{\alpha}(D)  =  \mtH(C_{\alpha}(D)),  \\
    C_{\mcD}(D) & = & {R_{\mcD}}\otimes_R C(D) , \ \ \ \mtH_{\mcD}(D)  =  \mtH(C_{\mcD}(D)), \\
    C_{\alpha\mcD}(D) & = & {R_{\alpha\mcD}}\otimes_R  C(D) , \ \ \mtH_{\alpha\mcD}(D)  =  \mtH(C_{\alpha}(D)).
\end{eqnarray}
We can arrange these four types of complexes and groups into a three-dimensional cube, where diagonal wiggly arrows denote passage to homology. 
\[
\NB{\tikz[scale=1]{
\begin{scope}
  \node (R) at (-2, 2) {$C(D)$};
  \node (Rd) at ( 2, 2) {$C_{\mcD}(D)$};
  \node (Ra) at (-2, -2) {$C_\alpha(D)$};
  \node (Rad) at ( 2,-2) {$C_{\alpha{\mcD}}(D)$};
  \node (A) at (0, 4) {$\mtH(D)$};
  \node (Ad) at ( 4, 4) {$\mtH_{\mcD}(D)$};
  \node (Aa) at (0, 0) {$\mtH_\alpha(D)$};
  \node (Aad) at ( 4,0) {$\mtH_{\alpha{\mcD}}(D)$};
  \draw[-to] (Ra)  -- (Rad) node [above, midway, font= \small]
  {$\mathcal{D}^{-1}$};
  \draw[->] (A)  -- (Ad) node [midway, above, font= \small]
  {$\mathcal{D}^{-1}$};
  \draw[->] (Aa)  -- (Aad) node [pos=0.7, above, font= \small]
  {$\mathcal{D}^{-1}$};  
\begin{scope}[decoration={snake, segment length=1mm, amplitude=0.3mm,
  post length=2pt}, font= \scriptsize]
  \draw[-to, {decorate}] (R) -- (A) node
  [above, midway, sloped] {homology};
  \draw[decorate,->] (Ra) -- (Aa);
  \draw[decorate,->] (Rd) -- (Ad);
  \draw[decorate,->] (Rad) -- (Aad);
  \end{scope}
  \draw[->] (R) -- (Ra) node [left, midway, font = \small] {$\alpha$} node
  [right, midway, font = \small] {$:2$};
  \draw[->] (A) -- (Aa) node [left, pos=0.7, font = \small] {$\alpha$} node
  [right, pos=0.7, font = \small] {$:2$};
  \draw[->] (Ad) -- (Aad) node [left, midway, font = \small] {$\alpha$} node
  [right, midway, font = \small] {$:2$};
  
  \draw[white, line width =2mm] (R) -- (Rd);
  \draw[white, line width =2mm] (Rd) -- (Rad);
  
  \draw[->] (R)  -- (Rd) node [pos=0.7, above, font= \small]
  {$\mathcal{D}^{-1}$};

  \draw[->] (Rd) -- (Rad) node [left, pos=0.7, font = \small] {$\alpha$} node
  [right, pos=0.7, font = \small] {$:2$};

\end{scope}}}
\]

The four terms in the vertices of the front square are complexes $C_{\ast}(D)$
obtained from $C(D)$ by suitable base changes $R\lra R_{\ast}$. They are complexes of free graded $R_{\ast}$-modules. Define the four homology  groups $\mtH_{\ast}(D)$ in the back square as the homology of these complexes. They are naturally modules over $R_{\ast}$ and, if a base point is  picked, modules  over $A_{\ast}$. 

Flatness  of the extensions $R\lra R_{\ast}$ implies that the natural maps 
\begin{equation} \label{eq_D_iso}
\mtH(D) \otimes_R R_{\ast} \lra \mtH_{\ast}(D) 
\end{equation}  
are isomorphisms of $R_{\ast}$-modules, also giving isomorphisms passing  from diagrams to links 
\begin{equation}\label{eq_L_iso}
\mtH(L) \otimes_R R_{\ast} \stackrel{\cong}{\lra} \mtH_{\ast}(L)
\end{equation} 

These homology theories extend to tangles as usual \cite{KhovFVIT}, by first extending $SL(2)$-equivariant rings $H^n$ over $R$ (with $H^1\cong A$) via base change $R\lra R_{\ast}$ to rings $H^n_{\ast}$. Bimodules and bimodule homomorphisms for flat tangles and their cobordisms extend as well, resulting in suitable 2-functors from the 2-category of flat tangle cobordisms to the 2-category of homogeneous bimodules over $H^n_{\ast}$, over all $n$. 

Extension to tangle cobordisms also works as usual. Due to  flatness of our base changes, invariance of maps induced by tangle cobordisms up to an overall sign follows from  the corresponding invariance for the theory built out  of $(R,A)$. 
For the invariance of link and tangle cobordism  maps we refer to the original papers~\cite{BNTangle, Jacobsson, KhTangleCob}.
This invariance up to a sign can be stated, for tangle cobordisms, in the language of maps  between complexes of graded  bimodules in the homotopy category. 

Isomorphisms (\ref{eq_L_iso}) are functorial in the link cobordism category, up to an overall sign. 
 
 \vspace{0.1in} 

Taking care of the sign requires modifying the theory and 
working with seamed circles or defect lines as in Caprau~\cite{CaprauTangle,CaprauWebFoams},  Clark--Morrison--Walker~\cite{ClarkMorrisonWalkerFunctoriality} and Vogel~\cite{vogel2015functoriality}, or $GL(2)$ foams as in Blanchet~\cite{Blanchet}. Defect lines in these theories are discussed  below, in Section~\ref{sec:ev}, mostly via the  evaluation  approach simplified from $GL(2)$ foams to surfaces  with seamed circles. 

\vspace{0.1in} 

{\it  $\alpha$-homology:}
As a complex of graded $R$-modules, 
\[ C_{\alpha}(D) \cong C(D)\cdot 1 \oplus C(D)\cdot \alpha_1 , 
\] 
thus isomorphic to two copies of $C(D)$, with a shift. Multiplication by $\alpha_1$ commutes with  taking cohomology. As graded $R$-modules, 
\[
\mtH_{\alpha}(D) \cong \mtH(D) \oplus \mtH(D)\cdot \alpha_1,
\] 
and, with a choice of a base point in $D$, that's an isomorphism of graded $A$-modules. 

We  see that the $\alpha$-homology $\mtH_{\alpha}(L)$, as a graded $A$-module, is isomorphic to the sum of two copies of $\mtH(L)$, one with a grading shift by $\{2\}$, the degree of $\alpha_1$, but has an additional symmetry induced by the transposition of $\alpha_1$ and $\alpha_2$ and boasts zero divisors in the homology of the unknot, viewed as a commutative Frobenius algebra via the pants, cup and cap cobordisms. 

\vspace{0.1in} 

{\it $\alpha\mcD$-homology:} Homology theory $\mtH_{\alpha\mcD}$ associated to the pair $(R_{\alpha\mcD},A_{\alpha\mcD})$ is essentially the  Lee homology. 
The  decomposition of $A_{\alpha\mcD}$ into  the direct  product (\ref{eq_dir_prod}) via the  idempotents in equation (\ref{eq_idemp}) parallels corresponding decomposition in the Lee homology. In  particular, 
arguments in Lee~\cite{Lee} and Rasmussen~\cite{RasmussenSlice} apply here as well and show that $\mtH_{\alpha\mcD}(L)$ is a free $R_{\alpha\mcD}$-module of rank $2^k$, where $k$ is the number of connected components of link $L$. 

Multiplication by $\mcD$ is a degree four isomorphism of $\mtH_{\alpha\mcD}(L)$. Quantum grading, lifting powers of $q$, is $\Z/4$-periodic in this theory. Invertibility  of $\mcD$ can be used to reduce the homology to one with a $\Z/4$-grading in the $q$-direction, see the remark below.  

Furthermore, 
\begin{equation}
  \mtH_{\alpha\mcD}(L) \cong \mtH_{\mcD}(L) \otimes_{R_{\mcD}} R_{\alpha\mcD} = 
  \mtH_{\mcD}(L)\cdot 1 \oplus \mtH_{\mcD}(L)\cdot  \alpha_1.
\end{equation}
Consequently, $\mtH_{\mcD}(L)$ is "half the size" of $\mtH_{\alpha\mcD}(L)$. 

\vspace{0.1in} 

\emph{Remark:} There are versions of
$C_{\mcD}(D)$ and  $\mtH_{\mcD}(D)$ chain complexes and homology groups with the  $q$-grading reduced to $\ZZ/4$. To define this theory, form the $\ZZ/4$-graded quotient ring $R_{\lambda}=R/(\mcD-\lambda)$ (another convenient notation is $R_{\lambda,\mcD}$), where $\lambda\in \{1,-1\}$ is an invertible degree zero element of $R$ (or a more general invertible element of a ground ring different  from $R$). We pick invertible $\lambda$ to make $\mcD$  invertible in $R_{\lambda}$.  Ring $A_{\lambda}=A\otimes_R R_{\lambda}$, 
which can also be denoted $A_{\lambda,\mcD}$, is defined via the base  change. The pair $(R_{\lambda},A_{\lambda})$ is a rank two Frobenius extension. Since $\deg(\mcD)$ is four, rings $R_{\lambda},A_{\lambda}$ are  $\ZZ/4$-graded only, ditto for the complexes  $C_{\lambda}(D)=C(D)\otimes_R R_{\lambda}$ and $\mtH_{\lambda}(D)=\mtH(C_{\lambda}(D)).$ A similar quotient  construction works for the Lee homology $\mtH_{\alpha\mcD}(D)$.

\section{Ring and module involutions and defect lines} \label{sec_fine_str}

\subsection{Involution \texorpdfstring{$\sigma$}{sigma} and Galois action.}
\label{subsec_invol_galois} 
    
Algebra $A$ carries an $R$-linear involution $\sigma$ given by $\sigma(X)=E_1-X$. As an algebra involution, it satisfies $\sigma(1)=1$. It transposes the roots $X_1=X,X_2=E_1-X$ in $A$ of the
polynomial $y^2-E_1y+E_2$ with coefficients in $R\subset A$, 
$\sigma(X_1)=X_2, \sigma(X_2)=X_1$. Ground ring $R$ is also the subring of $\sigma$-invariants of $A$. Notice that the $(-1)$-eigenspace of $\sigma$ is zero, which is not surprising since we are not working over a field. 

Via a base change $R\lra R_{\ast}$ this involution extends to an involution 
$\sigma$ of the $R_{\ast}$-algebra $A_{\ast}$, for any of the four Frobenius extensions $(R_{\ast},A_{\ast})$ in (\ref{eq_4_extensions}). In fact, it works for any base change $R\lra R_{\ast}$ of commutative rings, but we restrict to these four cases.  

Involution $\sigma$ acts $R_{\ast}$-linearly on the free $R_{\ast}$-module $A_{\ast}$ with a basis $\{1,X\}$. In this basis the action is given by the matrix 
\begin{equation}
    \sigma \ \longmapsto \ \left( \begin{matrix} 1 & E_1 \\ 0 & -1 \end{matrix} \right) \ .
\end{equation}

Let $A_{\ast}[\sigma]$ be the crossed product of $A_{\ast}$ with the group ring of the order two group $\ZZ_2$ generated by $\sigma$. Elements of $A_{\ast}[\sigma]$ have the form $a_0\cdot 1 + a_1\cdot \sigma$, with the multiplication rule that  moving  $\sigma$ to the right  of $a\in A_{\ast}$ produces $\sigma(a) \cdot \sigma$: 
\begin{equation}\label{eq_move_sigma0}
    b \cdot \sigma \cdot a = b\,  \sigma(a)\cdot \sigma,  \ a,b\in A_{\ast},\ \ \sigma^2=1. 
\end{equation}
This gives an $R_{\ast}$-linear action of the cross product $A_{\ast}[\sigma]$ on $A_{\ast}$, that is, a homomorphism 
from the cross-product to the endomorphism algebra
\begin{equation}
    A_{\ast}[\sigma] \lra \mathrm{End}_{R_{\ast}}(A_{\ast}) \cong 
    \mathrm{Mat}(2,R_{\ast}), 
\end{equation}
the latter isomorphic to the algebra of $2\times 2$ matrices with coefficients in $R_{\ast}$, where we picked the basis $\{1,X\}$ of $A_{\ast}$ as a free $R_{\ast}$-module. 

Algebra $A_{\ast}[\sigma]$ is four-dimensional over $R_{\ast}$, with a basis $\{1, \sigma, X, X \sigma\}$. Its basis elements act on $A_{\ast}=R_{\ast}\cdot 1 \oplus R_{\ast}\cdot X$ as the following $2\times 2$ matrices.
\begin{equation} \label{eq_4_matrices} 
1 \ \longmapsto \ \left( \begin{matrix} 1 & 0 \\ 0 & 1 \end{matrix} \right) , \ 
    \sigma \ \longmapsto \ \left( \begin{matrix} 1 & E_1 \\ 0 & -1 \end{matrix} \right) , \ 
    X \ \longmapsto \ \left( \begin{matrix} 0 & -E_2 \\ 1 & E_1 \end{matrix} \right) , \ 
    X\sigma \ \longmapsto \ \left( \begin{matrix} 0 & E_2 \\ 1 & 0 \end{matrix} \right) . 
\end{equation}
In the definition of a Galois extension of a commutative ring below we specialize the the case of a cyclic group of order two. 
\begin{dfn}[{\cite{AusGold, ChaseHarriRosen},{\cite[Theorem 12.2.9]{FordBook}}}]
\label{def_sep}
Suppose given an extension of commutative rings $S\subset T$ with an involution $\tau$ acting $S$-linearly on $T$ with $S$ the fixed subring and $T$ a projective $S$-module.  $(S,T)$ is called a \emph{Galois extension} of commutative rings with the cyclic Galois group $C_2=\{1,\tau\}$ of order two if the natural $S$-algebra homomorphism $T[\tau]\lra \End_S(T)$ is an isomorphism. 
Here $T[\tau]$ is the cross product of $T$ with the group ring of the order two group $\{1,\tau\}$. 
\end{dfn}
  
\begin{prop} $(R_{\ast},A_{\ast})$ is a Galois extension with the cyclic group $\{1,\sigma\}$ if and only if the discriminant $\mcD=E_1^2-4E_2$ is invertible in $R_{\ast}$. 
\end{prop}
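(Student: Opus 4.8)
The plan is to compute the determinant of the $R_\ast$-algebra map $\phi: A_\ast[\sigma] \to \End_{R_\ast}(A_\ast) \cong \mathrm{Mat}(2,R_\ast)$ in the basis $\{1,\sigma,X,X\sigma\}$ of the source and a suitable basis of the target, and to show this map is an isomorphism precisely when $\mcD$ is invertible. Since both sides are free $R_\ast$-modules of rank four, $\phi$ is an isomorphism if and only if its matrix (a $4\times 4$ matrix over $R_\ast$) has invertible determinant. Using the four explicit matrices in~(\ref{eq_4_matrices}) as the columns, I would arrange them against the standard basis $E_{11},E_{12},E_{21},E_{22}$ of $\mathrm{Mat}(2,R_\ast)$ and compute the resulting $4\times 4$ determinant; I expect it to come out (up to sign and units) equal to $\mcD = E_1^2 - 4E_2$.

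First I would set up the direction ``$\mcD$ invertible $\Rightarrow$ Galois'': write down the $4\times 4$ matrix whose rows record the coefficients of $1,\sigma,X,X\sigma$ acting as elements of $\mathrm{Mat}(2,R_\ast)$, namely the rows $(1,0,0,1)$, $(1,E_1,0,-1)$, $(0,-E_2,1,E_1)$, $(0,E_2,1,0)$, and evaluate its determinant. A short cofactor expansion along the first column reduces this to a $3\times 3$ computation, and I expect the answer to be $\pm(E_1^2-4E_2)=\pm\mcD$. If $\mcD$ is invertible in $R_\ast$, the matrix is invertible over $R_\ast$, hence $\phi$ is an $R_\ast$-module isomorphism, and being an algebra homomorphism already, it is an algebra isomorphism; so $(R_\ast,A_\ast)$ is Galois with group $\{1,\sigma\}$ in the sense of Definition~\ref{def_sep}, noting that $A_\ast$ is free (hence projective) over $R_\ast$ with fixed subring $R_\ast$ (the $(-1)$-eigenspace of $\sigma$ is spanned by no element unless $2$-torsion issues intervene — but $R_\ast$ is by hypothesis the fixed subring, which holds here since $\sigma(a+bX) = a+bX$ forces $b(E_1-2X)=0$, i.e.\ $b=0$ as $A_\ast$ is free on $\{1,X\}$).

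For the converse, if $\phi$ is an isomorphism then its determinant $\pm\mcD$ must be a unit in $R_\ast$; hence $\mcD$ is invertible. This direction is essentially immediate once the determinant has been identified.

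The main obstacle is simply making sure the determinant computation is organized correctly and that the identification with $\mcD$ is exact rather than off by a spurious factor; a useful sanity check is to localize at $\mcD$ (recovering extensions $2$ and $4$), where separability was already established in the Proposition on $(R_\mcD,A_\mcD)$, and to note that Galois extensions are in particular separable, so the two invertibility conditions must align. A secondary, more conceptual route — which I would mention as an alternative — is to invoke the standard characterization (as in~\cite[Theorem 12.2.9]{FordBook} or~\cite{ChaseHarriRosen}) that for a rank-two extension with involution, the Galois condition is equivalent to the trace form being unimodular, and then to observe that the trace form of $A_\ast$ over $R_\ast$ in the basis $\{1,X\}$ has Gram matrix $\left(\begin{smallmatrix} 2 & E_1 \\ E_1 & E_1^2-2E_2\end{smallmatrix}\right)$ with determinant $E_1^2-4E_2 = \mcD$; unimodularity of this form is then literally invertibility of $\mcD$. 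Either way the content is the same small determinant, and I would present the explicit-matrix computation as the primary argument with the trace-form remark as corroboration.
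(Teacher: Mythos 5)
Your proposal is correct and follows essentially the same route as the paper: the paper likewise writes the four matrices of (\ref{eq_4_matrices}) as the columns of a $4\times 4$ matrix over $R_{\ast}$ and observes that its determinant is $-\mcD$, so that the map $A_{\ast}[\sigma]\to\End_{R_{\ast}}(A_{\ast})$ is an isomorphism iff $\mcD$ is a unit. Your determinant (with the matrices as rows rather than columns) is the transpose of the paper's $U$ and indeed evaluates to $-\mcD$; the trace-form remark and the fixed-subring check are correct extras not present in the paper's shorter argument.
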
 

\begin{proof} The question is whether the four matrices in  (\ref{eq_4_matrices}) are a basis of the free $R_{\ast}$-module $\mathrm{Mat}(2,R_{\ast})$. Writing down these matrices in the column form, we get the matrix 
  \[
   U =  \begin{pmatrix}
      1 & 1 &0 & 0 \\
      0 & E_1 & -E_2 & E_2 \\
      0 & 0 & 1 & 1 \\
      1 & -1 & E_1 & 0
    \end{pmatrix}.
  \]
  A square matrix with coefficients in a  commutative ring $S$ is invertible if and only if its determinant is invertible in $S$. 
The determinant of $U$ is $-\mcD$, thus invertible iff $\mcD$ is. 
\end{proof}

Among our four rings $R_{\ast},$ the discriminant is invertible in $R_{\mcD}$ and $R_{\alpha \mcD}.$

\begin{cor} Extensions $(R_{\mcD},A_{\mcD})$ and $(R_{\alpha\mcD},A_{\alpha\mcD})$ are Galois with the involution $\sigma$. Extensions $(R,A)$ and $(R_{\alpha},A_{\alpha})$ are not Galois with this involution. 
\end{cor}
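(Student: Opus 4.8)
The plan is to deduce the corollary directly from the preceding Proposition, which reduces the Galois property of $(R_{\ast},A_{\ast})$ with respect to $\{1,\sigma\}$ to the single condition that the discriminant $\mcD=E_1^2-4E_2$ be invertible in the base ring $R_{\ast}$. So all that remains is to inspect, for each of the four rings $R_{\ast}$ appearing in (\ref{eq_4_extensions}), whether $\mcD$ is a unit.

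For the two rings on the right-hand (localized) side of the cube the discriminant is invertible by construction. In $R_{\mcD}=R[\mcD^{-1}]=\Z[E_1,E_2,\mcD^{-1}]$ the element $\mcD$ is literally inverted. In $R_{\alpha\mcD}=\Z[\alpha_1,\alpha_2,(\alpha_1-\alpha_2)^{-1}]$ we use the factorization $\mcD=(\alpha_1-\alpha_2)^2$ from (\ref{eq_discriminant}) and item~4 above: $\mcD$ is the square of a unit, hence a unit. By the Proposition, both $(R_{\mcD},A_{\mcD})$ and $(R_{\alpha\mcD},A_{\alpha\mcD})$ are Galois with involution $\sigma$.

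For the two rings on the left side we argue that $\mcD$ is \emph{not} a unit. The ring $R=\Z[E_1,E_2]$ is a polynomial ring over $\Z$, hence an integral domain whose only units are $\pm 1$; equivalently, $R$ is non-negatively graded with $R_0=\Z$, so no nonzero homogeneous element of positive degree is invertible. Since $\mcD$ is homogeneous of degree $4$ and nonzero, it is not a unit, so by the Proposition $(R,A)$ is not Galois with $\sigma$. The same reasoning applies verbatim to $R_{\alpha}=\Z[\alpha_1,\alpha_2]$: here $\mcD=(\alpha_1-\alpha_2)^2$, and $\alpha_1-\alpha_2$ is a nonzero non-unit in the domain $\Z[\alpha_1,\alpha_2]$ (degree $2$), so its square is again a non-unit. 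Hence $(R_{\alpha},A_{\alpha})$ is not Galois with $\sigma$ either.

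There is essentially no obstacle in this argument; the corollary is a bookkeeping consequence of the Proposition together with the explicit description of the four rings. The only point deserving an explicit word is the non-invertibility of $\mcD$ in $R$ and $R_{\alpha}$, which follows immediately from either the integral-domain structure of a polynomial ring over $\Z$ or from the grading (a homogeneous element of strictly positive degree in a connected graded ring cannot be a unit).
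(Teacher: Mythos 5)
Your argument is correct and matches the paper's: the corollary is read off from the ``if and only if'' of the preceding Proposition by checking invertibility of $\mcD$ in each of the four base rings, which the paper does in a single sentence and you spell out with slightly more detail (the grading/integral-domain reason why $\mcD$ is not a unit in $R$ and $R_{\alpha}$). No issues.
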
 

\vspace{0.1in} 

{\it Field extensions of degree two:} Let $F$ be a field. A homomorphism $\psi:R\lra F$ is determined by $a_1=\psi(E_1),a_2=\psi(E_2)$ in  $F$. To $\psi$ we can associate the base change ring $A_{\psi}= A\otimes_R F,$ which is an $F$-vector space with basis $\{1,X\}$ and multiplication $X^2=a_1 X - a_2$. The pair $(F,A_{\psi})$ is a field extension of degree two iff the 
polynomial $f(y) = y^2-a_1 y + a_2$ is irreducible in $F$, that is, does not have a root. 

Assume that field $F$ has characteristic other than two. Then 
for the $F$-algebra $A_{\psi}$ there are three possibilities: 
\begin{enumerate}
    \item $A_{\psi}$ is a field. This happens exactly when $\psi(\mcD)=a_1^2-4a_2$ is not a square in $F$, and then $A_{\psi}\cong  F[\sqrt{\psi(\mcD)}].$
    \item $A_{\psi}=Fe_1\times Fe_2$ is the product of two copies of $F$. This happens when $f(y)$ has two distinct roots $y_1,y_2$ in $F$, with the idempotents 
    \begin{equation}
        e_1=\frac{X-y_1}{y_2-y_1} \ \  \mathrm{and} \ \  
    e_2=\frac{X-y_2}{y_1-y_2}, \ \ 1 = e_1+e_2, \ e_i e_j = \delta_{i,j} e_i, \ i,j\in\{1,2\}.
    \end{equation} 
    Equivalently, $\psi(\mcD)\not=0 $ has a square root in  $F$. 
    \item $A_{\psi}\cong F[z]/(z^2)$ is a nilpotent extension of $F$ by a nilpotent element of order $2$. Equivalently, 
    $\psi(\mcD)=0$, that is, $a_1^2-4a_2=0$, and we can take $z= X -\frac{a_1}{2}$ for this   generator. 
\end{enumerate}
Cases (1) and (2) happen when the homomorphism $\psi:R\lra F$ extends to localization 
$\psi:R_{\mcD}\lra F$. 
 
Assume that $\mathrm{char}(F)=2$. The image  of the discriminant $\psi(\mcD)=a_1^2$ is a square. For  the $F$-algebra $A_{\psi}$ there are the following possibilities:
\begin{enumerate}
    \item $A_{\psi}$ is a field, that is, $y^2+a_1y+a_2$ has no roots in $F$. There are two cases. 
    \begin{enumerate}
        \item $a_1\not= 0$. Replacing $y$ by $a_1 z$ reduces the equation to $z^2+z+c=0$, $c=a_2 a_1^{-2}$, with no solutions in $F$ (which requires $c$ not to be in the image of the  map  $F\to F, z \mapsto z^2+z$).  Extension $(F,A_{\psi})$ is separable. 
        \item $a_1=0$ and $a_2$ is not a square in $F$. Extension $(F,A_{\psi})$ is inseparable.
    \end{enumerate}
    \item $A_{\psi}=Fe_1\times Fe_2$ is the product of two copies of $F$. This happens when $f(y)=y^2+a_1y+a_2$ has two distinct roots $y_1,y_2$ in $F$. The idempotents for the direct product decomposition are 
    \begin{equation}
        e_1=\frac{X-y_1}{y_2-y_1}, \ \  \mathrm{and} \ \  
    e_2=\frac{X-y_2}{y_1-y_2}.
    \end{equation} 
    \item $A_{\psi}\cong F[z]/(z^2)$ is a nilpotent extension of $F$ by a nilpotent element of order $2$. For this we need $a_1=0$ and $a_2=a^2$ to have a square root in $F$. One can  then take $z= X+a$ for the above isomorphism. 
\end{enumerate}
Among these four cases in characteristic two, (1a) and (2) correspond to $\psi(\mcD)\not=0$, giving a separable field extension and a direct product decomposition, respectively. When $\psi(\mcD)=0$, we are either in (1b) or (3), that is, $A_{\psi}$ is an inseparable field extension or a nilpotent extension of $F$. 

Looking across all characteristics, the case $\psi(\mcD)=0$ corresponds to a nilpotent extension (3) or an inseparable extension (1b). 
\vspace{0.1in}

\vspace{0.1in} 

\emph{Remark:} As is well-known, separability property in Definition~\ref{def_sep} for  extensions $S\subset T$ with the Galois group $\Z/2\cong\{1,\tau\}$
is equivalent to the following:  $S=T^{\tau}$ and there exist $x_1,x_2,y_1,y_2\in T$ such that 
\begin{equation}
     x_1 g(y_1)+ x_2 g(y_2)= \left\{  \begin{array}{ll} 1 & \mathrm{if }\ g=1 \\ 0 & \mathrm{if }\ g=\tau 
    \end{array} \right. 
\end{equation}
It is similar but not the same as our neck-cutting relations for $(R,A)$, see equations (\ref{eq_id_map}) and  (\ref{eq_neck_cut}). For comparisons and analogies between Frobenius and separable extensions we refer to~\cite{Kadison, CaenIonMili} and references therein.

\vspace{0.1in}

\subsection{Involution \texorpdfstring{$\sigma$}{sigma+-} and defect lines.} \label{sec_inv_d}
For a TQFT interpretation, it's convenient to see $\sigma$ as an
involution of the $R$-module $A$, not of $A$ as an algebra. Denote by $\sigma_+=\sigma$ this $R$-module map and
by $\sigma_-$ the $R$-module map $-\sigma$. We have 
\[
  \sigma_+(1)=1, \quad \sigma_+(X) = E_1-X,\qquad   \sigma_-(1)=-1, \quad
  \sigma_-(X) = X- E_1. 
\]
Also, $\sigma_{\pm}(a) = \pm (\epsilon(X a) - X\epsilon(a)),$ for $a\in A$. 
Pictorially, this reads 
\[
  \sigma_+ = \cfCircleSplit{0.5}{}{\bullet} -
 \cfCircleSplit{0.5}{\bullet}{}
 \qquad \text{and} \qquad
   \sigma_- = \cfCircleSplit{0.5}{\bullet} {}-
 \cfCircleSplit{0.5}{}{\bullet}.
\]
We introduce a diagrammatic notation for $\sigma_\pm$ as a defect line on an annulus with a choice of co-orientation, that is, a preferred side:
\[
  \sigma_+ = \NB{\tikz[scale=0.5]{\begin{scope}
  \begin{scope}[yshift= 1.5cm]
    \draw (1,0) arc (0:360:1cm and 0.3cm)coordinate [pos=0](e) coordinate [pos=0.5](f);
  \end{scope}
  \begin{scope}[yshift =-1.5cm]
    \draw (1,0) arc (0:-180:1cm and 0.3cm)coordinate [pos=0](g) coordinate [pos=1](h);
    \draw[densely dotted] (1,0) arc (0:180:1cm and 0.3cm); 
  \end{scope}
  \draw[very thin] (e) -- (g);
  \draw[very thin] (f) -- (h);
  \fill[pattern=vertical lines, pattern color= red, draw = none] (1,0) arc  (0:180:1cm and 0.3cm) -- (-1, 0.2) arc (180:0: 1cm and 0.3cm) --cycle;
  \draw[red] (1,0) arc  (0:180:1cm and 0.3cm);
  \draw [line width =5mm, white, dashed ] (0.9,0.2) -- (-0.9, 0.20);  
  \fill[pattern=vertical lines, pattern color= red, draw = none] (1,0) arc  (0:-180:1cm and 0.3cm) -- (-1, 0.2) arc (-180:0: 1cm and 0.3cm) --cycle;
  \draw[red] (1,0) arc  (0:-180:1cm and 0.3cm);
\end{scope}}},\qquad \qquad
  \sigma_- = \NB{\tikz[scale=0.5]{\begin{scope}
  \begin{scope}[yshift= 1.5cm]
    \draw (1,0) arc (0:360:1cm and 0.3cm)coordinate [pos=0](e) coordinate [pos=0.5](f);
  \end{scope}
  \begin{scope}[yshift =-1.5cm]
    \draw (1,0) arc (0:-180:1cm and 0.3cm)coordinate [pos=0](g) coordinate [pos=1](h);
    \draw[densely dotted] (1,0) arc (0:180:1cm and 0.3cm); 
  \end{scope}
  \draw[very thin] (e) -- (g);
  \draw[very thin] (f) -- (h);
  \fill[pattern=vertical lines, pattern color= red, draw = none] (1,0) arc  (0:180:1cm and 0.3cm) -- (-1, -0.2) arc (180:0: 1cm and 0.3cm) --cycle;
  \draw[red] (1,0) arc  (0:180:1cm and 0.3cm);
  \draw [line width =5mm, white, dashed ] (0.9,0.2) -- (-0.9, 0.20);  
  \fill[pattern=vertical lines, pattern color= red, draw = none] (1,0) arc  (0:-180:1cm and 0.3cm) -- (-1, -0.2) arc (-180:0: 1cm and 0.3cm) --cycle;
  \draw[red] (1,0) arc  (0:-180:1cm and 0.3cm);
\end{scope}}}.
\]
Co-orientation at this defect line can be reversed at the cost of adding a minus sign. 
 
 \[
  \NB{\tikz[scale=0.5]{\begin{scope}
  \begin{scope}[yshift= 1.5cm]
    \draw (1,0) arc (0:360:1cm and 0.3cm)coordinate [pos=0](e) coordinate [pos=0.5](f);
  \end{scope}
  \begin{scope}[yshift =-1.5cm]
    \draw (1,0) arc (0:-180:1cm and 0.3cm)coordinate [pos=0](g) coordinate [pos=1](h);
    \draw[densely dotted] (1,0) arc (0:180:1cm and 0.3cm); 
  \end{scope}
  \draw[very thin] (e) -- (g);
  \draw[very thin] (f) -- (h);
  \fill[pattern=vertical lines, pattern color= red, draw = none] (1,0) arc  (0:180:1cm and 0.3cm) -- (-1, 0.2) arc (180:0: 1cm and 0.3cm) --cycle;
  \draw[red] (1,0) arc  (0:180:1cm and 0.3cm);
  \draw [line width =5mm, white, dashed ] (0.9,0.2) -- (-0.9, 0.20);  
  \fill[pattern=vertical lines, pattern color= red, draw = none] (1,0) arc  (0:-180:1cm and 0.3cm) -- (-1, 0.2) arc (-180:0: 1cm and 0.3cm) --cycle;
  \draw[red] (1,0) arc  (0:-180:1cm and 0.3cm);
\end{scope}}} =-
  \NB{\tikz[scale=0.5]{\begin{scope}
  \begin{scope}[yshift= 1.5cm]
    \draw (1,0) arc (0:360:1cm and 0.3cm)coordinate [pos=0](e) coordinate [pos=0.5](f);
  \end{scope}
  \begin{scope}[yshift =-1.5cm]
    \draw (1,0) arc (0:-180:1cm and 0.3cm)coordinate [pos=0](g) coordinate [pos=1](h);
    \draw[densely dotted] (1,0) arc (0:180:1cm and 0.3cm); 
  \end{scope}
  \draw[very thin] (e) -- (g);
  \draw[very thin] (f) -- (h);
  \fill[pattern=vertical lines, pattern color= red, draw = none] (1,0) arc  (0:180:1cm and 0.3cm) -- (-1, -0.2) arc (180:0: 1cm and 0.3cm) --cycle;
  \draw[red] (1,0) arc  (0:180:1cm and 0.3cm);
  \draw [line width =5mm, white, dashed ] (0.9,0.2) -- (-0.9, 0.20);  
  \fill[pattern=vertical lines, pattern color= red, draw = none] (1,0) arc  (0:-180:1cm and 0.3cm) -- (-1, -0.2) arc (-180:0: 1cm and 0.3cm) --cycle;
  \draw[red] (1,0) arc  (0:-180:1cm and 0.3cm);
\end{scope}}}.
\]
One can also call these defect or seam lines  \emph{$\sigma$-defect lines}.
Notice that the diagrams for $\sigma_+$ and $\sigma_-$ are not diffeomorphic rel boundary. One can be taken to the other by reflection  in a horizontal plane, but that map is not the identity on the boundary. 
Sliding a dot through a $\sigma$-defect line converts it to the dual dot. Namely, as endomorphisms of $A$, 
\[  \sigma_+ \, X_1 = X_2 \, \sigma_+, \ \ \sigma_- \, X_1 = X_2 \, \sigma_-,
\]
and likewise with indices $1,2$ transposed.  Diagrammatically, 
\[
\cfSeamSquareTwo{0.6}{\bullet}{} = 
    \cfSeamSquareTwo{0.6}{}{\circ}\ , 
    \ \ \ 
   \cfSeamSquareTwo{0.6}{\circ}{} = 
    \cfSeamSquareTwo{0.6}{}{\bullet} \ .
    \] 
Sliding a star dot through a $\sigma$-defect line flips its co-orientation.  As endomorphisms of $A$, 
\[  \sigma_+ \, X_{\star} = X_{\star} \, \sigma_-, \ \ \sigma_- \, X_{\star} = X_{\star} \, \sigma_+ . 
\]
Alternatively, one can 
add a minus sign as star dot is slid:
\begin{equation}\label{eq_star_dot_flip}
\cfSeamSquareTwo{0.6}{\star}{} = - \ 
    \cfSeamSquareTwo{0.6}{}{\star}\ .
\end{equation} 

\vspace{0.1in} 

The involutory property of $\sigma_{\pm}$ says that two parallel defect circles, with co-orientation pointing in the same direction, can be removed. Equivalently, if two parallel defect circles both point either into or out of the annulus region separating them, they can be removed with a minus sign, see equation (\ref{eq:remove-2-circles}) below.  

Although 
$\sigma_+$ is an algebra involution, while $\sigma_-$ is a coalgebra involution, so that 
\begin{eqnarray}
  & & \sigma_{\pm}(1) =\pm 1, \ \
\sigma_{\pm} \circ m \circ (\sigma_{\pm}\otimes \sigma_{\pm})  = \pm m,\label{eq_sigma_m} \\
      & & \epsilon \circ \sigma_{\pm} = \mp \epsilon, \ \ 
       (\sigma_\pm  \otimes \sigma_\pm) \circ \Delta \circ \sigma_\pm
      =  \mp\Delta,\label{eq_sigma_d} 
\end{eqnarray}
 it's more natural to think of $\sigma_+$ and $\sigma_-$ as $R$-module isomorphisms only. Relations between $\sigma_{\pm}$, multiplication and comultiplication can be rewritten as the equations for the removal of three defect circles around the three holes of a thrice-punctured 2-sphere, see equation (\ref{eq:remove-3-circles}). 
 Likewise, the relations on $\sigma_{\pm}$ and the unit and the counit maps, see equations (\ref{eq_sigma_m}) and  (\ref{eq_sigma_d}) left, 
 are the removal relations for a defect circle that bounds a disk, see equation (\ref{eq:remove-1-circle}). 

\begin{gather}\label{eq:remove-1-circle}
  \cfDecSquare{0.6}{\circlein{0.7}} \ = - \ 
  \  \cfDecSquare{0.6}{\circleout{0.7}} 
  \  = -\ \ \cfDecSquare{0.6}{} \\[4pt]
  \label{eq:remove-2-circles}
  \NB{\tikz[scale =0.7]{\begin{scope}
  \begin{scope}[yshift= 1.5cm]
    \draw (1,0) arc (0:360:1cm and 0.3cm)coordinate [pos=0](e) coordinate [pos=0.5](f);
  \end{scope}
  \begin{scope}[yshift =-1.5cm]
    \draw (1,0) arc (0:-180:1cm and 0.3cm)coordinate [pos=0](g) coordinate [pos=1](h);
    \draw[densely dotted] (1,0) arc (0:180:1cm and 0.3cm); 
  \end{scope}
  \draw[very thin] (e) -- (g);
  \draw[very thin] (f) -- (h);
  \begin{scope}[yshift= -0.6cm]
    \fill[pattern=vertical lines, pattern color= red, draw = none]
    (1,0) arc (0:180:1cm and 0.3cm) -- (-1, 0.2) arc (180:0: 1cm and
    0.3cm) --cycle; \draw[red] (1,0) arc (0:180:1cm and 0.3cm); \draw
    [line width =5mm, white, dashed ] (0.9,0.3) -- (-0.9,
    0.30); 
    \fill[pattern=vertical lines, pattern color= red, draw = none]
    (1,0) arc (0:-180:1cm and 0.3cm) -- (-1, 0.2) arc (-180:0: 1cm and
    0.3cm) --cycle; \draw[red] (1,0) arc (0:-180:1cm and 0.3cm);
  \end{scope}
  \begin{scope}[yshift=0.6cm]
      \fill[pattern=vertical lines, pattern color= red, draw = none] (1,0) arc  (0:180:1cm and 0.3cm) -- (-1, -0.2) arc (180:0: 1cm and 0.3cm) --cycle;
  \draw[red] (1,0) arc  (0:180:1cm and 0.3cm);
  \draw [line width =5mm, white, dashed ] (0.9,0.2) -- (-0.9, 0.20);  
  \fill[pattern=vertical lines, pattern color= red, draw = none] (1,0) arc  (0:-180:1cm and 0.3cm) -- (-1, -0.2) arc (-180:0: 1cm and 0.3cm) --cycle;
  \draw[red] (1,0) arc  (0:-180:1cm and 0.3cm);
  \end{scope}
\end{scope}}} =
  \NB{\tikz[scale =0.7 ]{\begin{scope}
 
  \begin{scope}[yshift =-1.5cm]
    \draw (1,0) arc (0:-180:1cm and 0.3cm)coordinate [pos=0](g) coordinate [pos=1](h);
    \draw[densely dotted] (1,0) arc (0:180:1cm and 0.3cm); 
  \end{scope}
 
  \begin{scope}[yshift= 0.6cm]
    \fill[pattern=vertical lines, pattern color= red, draw = none]
    (1,0) arc (0:180:1cm and 0.3cm) -- (-1, 0.2) arc (180:0: 1cm and
    0.3cm) --cycle; \draw[red] (1,0) arc (0:180:1cm and 0.3cm); \draw
    [line width =5mm, white, dashed ] (0.9,0.3) -- (-0.9,
    0.30); 
    \fill[pattern=vertical lines, pattern color= red, draw = none]
    (1,0) arc (0:-180:1cm and 0.3cm) -- (-1, 0.2) arc (-180:0: 1cm and
    0.3cm) --cycle; \draw[red] (1,0) arc (0:-180:1cm and 0.3cm);
  \end{scope}
  \begin{scope}[yshift=-0.6cm]
      \fill[pattern=vertical lines, pattern color= red, draw = none] (1,0) arc  (0:180:1cm and 0.3cm) -- (-1, -0.2) arc (180:0: 1cm and 0.3cm) --cycle;
  \draw[red] (1,0) arc  (0:180:1cm and 0.3cm);
  \draw [line width =5mm, white, dashed ] (0.9,0.2) -- (-0.9, 0.20);  
  \fill[pattern=vertical lines, pattern color= red, draw = none] (1,0) arc  (0:-180:1cm and 0.3cm) -- (-1, -0.2) arc (-180:0: 1cm and 0.3cm) --cycle;
  \draw[red] (1,0) arc  (0:-180:1cm and 0.3cm);
  \end{scope}
   \begin{scope}[yshift= 1.5cm]
    \draw (1,0) arc (0:360:1cm and 0.3cm)coordinate [pos=0](e) coordinate [pos=0.5](f);
  \end{scope}
 \draw[very thin] (e) -- (g);
  \draw[very thin] (f) -- (h);
\end{scope}}} = - \ \  \cfCircleId{0.7}{} \\[4pt]
  \label{eq:remove-3-circles}
  \NB{\tikz[scale=0.4]{\begin{scope}[rotate=0, very thin]
  \draw[thin, red]            (1, -2) arc (0:180: 1cm and 0.3cm);
  \fill[pattern=vertical lines, pattern color= red, draw = none]
  (1,-2) arc  (0:180:1cm and 0.3cm) -- (-1, -1.8) arc (180:0: 1cm
  and 0.3cm) --cycle;
    \draw [line width =5mm, white, dashed ] (0.9,-1.7) -- (-0.9, -1.7);  
  \draw[thin, red]            (1, -2) arc (360:180: 1cm and 0.3cm);
  \fill[pattern=vertical lines, pattern color= red, draw = none]
  (1,-2) arc  (360:180:1cm and 0.3cm) -- (-1, -1.8) arc (180:360: 1cm
  and 0.3cm) --cycle;
    \draw (1, -3) -- (1,-2) .. controls +(0,0.5) and +(-120:0.5)  .. (-30:1.5);
    \draw (-1, -3) -- (-1,-2) .. controls +(0,0.5) and +(-60:0.5)  .. (-150:1.5);
  \draw[thin] (1, -3) arc (360:180: 1cm and 0.3cm);
  \draw[thin, densely dotted] (1, -3) arc (0:180: 1cm and 0.3cm);
  \end{scope}
\begin{scope}[rotate=120, very thin]
  \draw[thin, red]            (1, -2) arc (360:180: 1cm and 0.3cm);
  \fill[pattern=north east lines, pattern color= red, draw = none]
  (1,-2) arc  (360:180:1cm and 0.3cm) -- (-1, -1.8) arc (180:360: 1cm
  and 0.3cm) --cycle;
  \draw [line width =5mm, white, densely dashed ] (0.9,-2.2) -- (-0.9, -2.2);  
  \draw[thin, red]            (1, -2) arc (0:180: 1cm and 0.3cm);
  \fill[pattern=north east lines, pattern color= red, draw = none]
  (1,-2) arc  (0:180:1cm and 0.3cm) -- (-1, -1.8) arc (180:0: 1cm
  and 0.3cm) --cycle;
  \draw (1, -3) -- (1,-2) .. controls +(0,0.5) and +(-120:0.5)  .. (-30:1.5);
  \draw (-1, -3) -- (-1,-2) .. controls +(0,0.5) and +(-60:0.5)  .. (-150:1.5);
  \draw[thin] (1, -3) arc (360:180: 1cm and 0.3cm);
  \draw[thin,] (1, -3) arc (0:180: 1cm and 0.3cm);
\end{scope}
\begin{scope}[rotate=240, very thin]
  \draw[thin, red]            (1, -2) arc (360:180: 1cm and 0.3cm);
  \fill[pattern=north west lines, pattern color= red, draw = none]
  (1,-2) arc  (360:180:1cm and 0.3cm) -- (-1, -1.8) arc (180:360: 1cm
  and 0.3cm) --cycle;
  \draw [line width =5mm,white, densely dashed ] (0.9,-2.2) -- (-0.9, -2.2);  
  \draw[thin, red]            (1, -2) arc (0:180: 1cm and 0.3cm);
  \fill[pattern=north west lines, pattern color= red, draw = none]
  (1,-2) arc  (0:180:1cm and 0.3cm) -- (-1, -1.8) arc (180:0: 1cm
  and 0.3cm) --cycle;
  \draw (1, -3) -- (1,-2) .. controls +(0,0.5) and +(-120:0.5)  .. (-30:1.5);
  \draw (-1, -3) -- (-1,-2) .. controls +(0,0.5) and +(-60:0.5)  .. (-150:1.5);
  \draw[thin] (1, -3) arc (360:180: 1cm and 0.3cm);
  \draw[thin] (1, -3) arc (0:180: 1cm and 0.3cm);
\end{scope}}} = - \ \ 
  \NB{\tikz[scale=0.4]{\begin{scope}[rotate=0, very thin]
  \draw[thin, red]            (1, -2) arc (0:180: 1cm and 0.3cm);
  \fill[pattern=vertical lines, pattern color= red, draw = none]
  (1,-2) arc  (0:180:1cm and 0.3cm) -- (-1, -2.2) arc (180:0: 1cm
  and 0.3cm) --cycle;
  \draw [line width =5mm, white, dashed ] (0.9,-1.9) -- (-0.9, -1.9);  
  \draw[thin, red]            (1, -2) arc (360:180: 1cm and 0.3cm);
  \fill[pattern=vertical lines, pattern color= red, draw = none]
  (1,-2) arc  (360:180:1cm and 0.3cm) -- (-1, -2.2) arc (180:360: 1cm
  and 0.3cm) --cycle;
    \draw (1, -3) -- (1,-2) .. controls +(0,0.5) and +(-120:0.5)  .. (-30:1.5);
    \draw (-1, -3) -- (-1,-2) .. controls +(0,0.5) and +(-60:0.5)  .. (-150:1.5);
  \draw[thin] (1, -3) arc (360:180: 1cm and 0.3cm);
  \draw[thin, densely dotted] (1, -3) arc (0:180: 1cm and 0.3cm);
  \end{scope}
\begin{scope}[rotate=120, very thin]
  \draw[thin, red]            (1, -2) arc (360:180: 1cm and 0.3cm);
  \fill[pattern=north east lines, pattern color= red, draw = none]
  (1,-2) arc  (360:180:1cm and 0.3cm) -- (-1, -2.2) arc (180:360: 1cm
  and 0.3cm) --cycle;
  \draw [line width =5mm, white, densely dashed ] (0.9,-2.3) -- (-0.9, -2.3);  
  \draw[thin, red]            (1, -2) arc (0:180: 1cm and 0.3cm);
  \fill[pattern=north east lines, pattern color= red, draw = none]
  (1,-2) arc  (0:180:1cm and 0.3cm) -- (-1, -2.2) arc (180:0: 1cm
  and 0.3cm) --cycle;
  \draw (1, -3) -- (1,-2) .. controls +(0,0.5) and +(-120:0.5)  .. (-30:1.5);
  \draw (-1, -3) -- (-1,-2) .. controls +(0,0.5) and +(-60:0.5)  .. (-150:1.5);
  \draw[thin] (1, -3) arc (360:180: 1cm and 0.3cm);
  \draw[thin,] (1, -3) arc (0:180: 1cm and 0.3cm);
\end{scope}
\begin{scope}[rotate=240, very thin]
  \draw[thin, red]            (1, -2) arc (360:180: 1cm and 0.3cm);
  \fill[pattern=north west lines, pattern color= red, draw = none]
  (1,-2) arc  (360:180:1cm and 0.3cm) -- (-1, -2.2) arc (180:360: 1cm
  and 0.3cm) --cycle;
  \draw [line width =5mm,white, densely dashed ] (0.9,-2.3) -- (-0.9, -2.3);  
  \draw[thin, red]            (1, -2) arc (0:180: 1cm and 0.3cm);
  \fill[pattern=north west lines, pattern color= red, draw = none]
  (1,-2) arc  (0:180:1cm and 0.3cm) -- (-1, -2.2) arc (180:0: 1cm
  and 0.3cm) --cycle;
  \draw (1, -3) -- (1,-2) .. controls +(0,0.5) and +(-120:0.5)  .. (-30:1.5);
  \draw (-1, -3) -- (-1,-2) .. controls +(0,0.5) and +(-60:0.5)  .. (-150:1.5);
  \draw[thin] (1, -3) arc (360:180: 1cm and 0.3cm);
  \draw[thin] (1, -3) arc (0:180: 1cm and 0.3cm);
\end{scope}}} =  - \ \ 
  \NB{\tikz[scale=0.4]{\begin{scope}[rotate=0, very thin]
  \draw[thin] (1, -3) arc (360:180: 1cm and 0.3cm);
  \draw[thin, densely dotted] (1, -3) arc (0:180: 1cm and 0.3cm);
    \draw (1, -3) -- (1,-2) .. controls +(0,0.5) and +(-120:0.5)  .. (-30:1.5);
    \draw (-1, -3) -- (-1,-2) .. controls +(0,0.5) and +(-60:0.5)  .. (-150:1.5);
  \end{scope}
\begin{scope}[rotate=120, very thin]
  \draw[thin] (1, -3) arc (360:180: 1cm and 0.3cm);
  \draw[thin,] (1, -3) arc (0:180: 1cm and 0.3cm);
  \draw (1, -3) -- (1,-2) .. controls +(0,0.5) and +(-120:0.5)  .. (-30:1.5);
  \draw (-1, -3) -- (-1,-2) .. controls +(0,0.5) and +(-60:0.5)  .. (-150:1.5);
\end{scope}
\begin{scope}[rotate=240, very thin]
  \draw[thin] (1, -3) arc (360:180: 1cm and 0.3cm);
  \draw[thin] (1, -3) arc (0:180: 1cm and 0.3cm);
  \draw (1, -3) -- (1,-2) .. controls +(0,0.5) and +(-120:0.5)  .. (-30:1.5);
  \draw (-1, -3) -- (-1,-2) .. controls +(0,0.5) and +(-60:0.5)  .. (-150:1.5);
\end{scope}}} 
\end{gather}

\vspace{0.1in} 

Together with the involutory  property, this allows to remove a collection of $n$ distinct defect circles bounding an $n$-punctured 2-sphere facet without dots, perhaps with a sign, depending on co-orientations. When all circles are co-oriented into a dotless facet, the sign is $(-1)$. When all circles are co-oriented out of a dotless facet, the sign is $(-1)^{n+1}$. In general, if $k$ circles are co-oriented out of the facet, the sign is $(-1)^{k+1}$. In the special case $n=0$ there is still consistency since the facet is a dotless 2-sphere, evaluating to zero. If a region bounding an $n$-punctured sphere contains dots, they can be reduced to a linear combination of the same diagrams with at most one dot in the region. A dot can be flipped over a defect line to a dual dot on the other side, see equation (\ref{eq_star_dot_flip}) above. 

If a defect circle bounds a facet on both sides, the cobordism evaluates to zero, since the configuration of defect lines is not even (see Section~\ref{sec:ev} for details). 

To prove that a cobordism equipped with $\sigma$-defect lines gives a well-defined map between tensor powers of $A$ (which is implicitly assumed in this discussion), it's convenient to use evaluation of cobordisms with  $\sigma$-defect lines. We do this a little later, in Section~\ref{sec:ev}. 

Maps $\sigma_{\pm}:A \lra A$ also decompose via the dual dot as 
\begin{equation}\label{eq_sigma_cut}
\sigma_{\pm}(a)\ = \pm \left( \epsilon(X_2 \, a) - \ X_2 \, \epsilon(a) \right) .  
\end{equation}
Furthermore, we can interpret the equation 
\begin{equation}
    (\sigma_{\pm}\otimes 1)\Delta(1) = 
    (1\otimes \sigma_{\mp})\Delta(1)
\end{equation}
via the movement of a defect line from the left to the right side of a
tube. The tube has both boundary circles at the top, and the
co-orientation goes from pointing up to pointing down as the defect
line is isotoped, as illustrated below. 

\[
  \NB{\tikz[scale=0.5]{\begin{scope}
  \draw[very thin, dotted] (1, -1) arc (0:180:1cm and 0.3cm);
  \draw[very thin] (1, -1) arc (0:-180:1cm);
  \draw[very thin, dotted] (1, -1) arc (0:-180:1cm and 0.3cm);
  \fill[pattern=vertical lines, pattern color= red, draw = none] (-1,1) arc  (0:180:1cm and 0.3cm) -- (-3, 1.15) arc (180:0: 1cm and 0.3cm) --cycle;
  \draw[red] (-1,1) arc  (0:180:1cm and 0.3cm);
  \draw [line width =5mm, white, dashed ] (-1.1,1.2) -- (-2.9, 1.20);  
  \fill[pattern=vertical lines, pattern color= red, draw = none]
  (-1,1) arc  (0:-180:1cm and 0.3cm) -- (-3, 1.15) arc (-180:0: 1cm and
  0.3cm) --cycle;
    \draw[red] (-1,1) arc  (0:-180:1cm and 0.3cm);
      \draw (-1, 1.8) arc (0:360:1cm and 0.3cm);
  \draw (3, 1.8) arc (0:360:1cm and 0.3cm);
  \draw[very thin] (1,  1.8) -- (1,1) .. controls +(0,-0.5) and + (0, -0.5)
  .. ++(-2,0)-- +(0,0.8);
    \draw[very thin] (-1,  -1) .. controls +(0,0.3) and + (0, -0.3)
    .. ++(-2,2) -- +(0, 0.8);
    \draw[very thin] (1,  -1) .. controls +(0,0.3) and + (0, -0.3)
    .. ++(2,2)-- +(0, 0.8);
\end{scope}}} =
  \NB{\tikz[scale=0.5]{\begin{scope}
  \draw[very thin, dotted] (1, -1) arc (0:180:1cm and 0.3cm);
  \draw[very thin] (1, -1) arc (0:-180:1cm);
  \draw[very thin, dotted] (1, -1) arc (0:-180:1cm and 0.3cm);
  \fill[pattern=vertical lines, pattern color= red, draw = none] (3,1.15) arc  (0:180:1cm and 0.3cm) -- (1, 1) arc (180:0: 1cm and 0.3cm) --cycle;
  \draw[red] (3,1.15) arc  (0:180:1cm and 0.3cm);
  \draw [line width =5mm, white, dashed ] (2.9,1.4) -- (1.1, 1.4);  
  \fill[pattern=vertical lines, pattern color= red, draw = none]
  (3,1.15) arc  (0:-180:1cm and 0.3cm) -- (1, 1) arc (-180:0: 1cm and
  0.3cm) --cycle;
    \draw[red] (3,1.15) arc  (0:-180:1cm and 0.3cm);
    \draw (3, 1.8) arc (0:360:1cm and 0.3cm);
      \draw (-1, 1.8) arc (0:360:1cm and 0.3cm);
  \draw[very thin] (1,  1.8) -- (1,1) .. controls +(0,-0.5) and + (0, -0.5)
  .. ++(-2,0)-- +(0,0.8);
    \draw[very thin] (-1,  -1) .. controls +(0,0.3) and + (0, -0.3)
    .. ++(-2,2) -- +(0, 0.8);
    \draw[very thin] (1,  -1) .. controls +(0,0.3) and + (0, -0.3)
    .. ++(2,2)-- +(0, 0.8);
\end{scope}}},\qquad \qquad 
  \NB{\tikz[yscale= 0.5, xscale=-0.5]{\begin{scope}
  \draw[very thin, dotted] (1, -1) arc (0:180:1cm and 0.3cm);
  \draw[very thin] (1, -1) arc (0:-180:1cm);
  \draw[very thin, dotted] (1, -1) arc (0:-180:1cm and 0.3cm);
  \fill[pattern=vertical lines, pattern color= red, draw = none] (3,1.15) arc  (0:180:1cm and 0.3cm) -- (1, 1) arc (180:0: 1cm and 0.3cm) --cycle;
  \draw[red] (3,1.15) arc  (0:180:1cm and 0.3cm);
  \draw [line width =5mm, white, dashed ] (2.9,1.4) -- (1.1, 1.4);  
  \fill[pattern=vertical lines, pattern color= red, draw = none]
  (3,1.15) arc  (0:-180:1cm and 0.3cm) -- (1, 1) arc (-180:0: 1cm and
  0.3cm) --cycle;
    \draw[red] (3,1.15) arc  (0:-180:1cm and 0.3cm);
    \draw (3, 1.8) arc (0:360:1cm and 0.3cm);
      \draw (-1, 1.8) arc (0:360:1cm and 0.3cm);
  \draw[very thin] (1,  1.8) -- (1,1) .. controls +(0,-0.5) and + (0, -0.5)
  .. ++(-2,0)-- +(0,0.8);
    \draw[very thin] (-1,  -1) .. controls +(0,0.3) and + (0, -0.3)
    .. ++(-2,2) -- +(0, 0.8);
    \draw[very thin] (1,  -1) .. controls +(0,0.3) and + (0, -0.3)
    .. ++(2,2)-- +(0, 0.8);
\end{scope}}} =
  \NB{\tikz[yscale= 0.5, xscale=-0.5]{\begin{scope}
  \draw[very thin, dotted] (1, -1) arc (0:180:1cm and 0.3cm);
  \draw[very thin] (1, -1) arc (0:-180:1cm);
  \draw[very thin, dotted] (1, -1) arc (0:-180:1cm and 0.3cm);
  \fill[pattern=vertical lines, pattern color= red, draw = none] (-1,1) arc  (0:180:1cm and 0.3cm) -- (-3, 1.15) arc (180:0: 1cm and 0.3cm) --cycle;
  \draw[red] (-1,1) arc  (0:180:1cm and 0.3cm);
  \draw [line width =5mm, white, dashed ] (-1.1,1.2) -- (-2.9, 1.20);  
  \fill[pattern=vertical lines, pattern color= red, draw = none]
  (-1,1) arc  (0:-180:1cm and 0.3cm) -- (-3, 1.15) arc (-180:0: 1cm and
  0.3cm) --cycle;
    \draw[red] (-1,1) arc  (0:-180:1cm and 0.3cm);
      \draw (-1, 1.8) arc (0:360:1cm and 0.3cm);
  \draw (3, 1.8) arc (0:360:1cm and 0.3cm);
  \draw[very thin] (1,  1.8) -- (1,1) .. controls +(0,-0.5) and + (0, -0.5)
  .. ++(-2,0)-- +(0,0.8);
    \draw[very thin] (-1,  -1) .. controls +(0,0.3) and + (0, -0.3)
    .. ++(-2,2) -- +(0, 0.8);
    \draw[very thin] (1,  -1) .. controls +(0,0.3) and + (0, -0.3)
    .. ++(2,2)-- +(0, 0.8);
\end{scope}}}.
\]

We have 
\begin{equation}\label{eq_circ_sigma} 
 m \circ (\sigma_{\pm}\otimes 1) \circ \Delta = 0 = m \circ (1\otimes \sigma_{\pm}) \circ \Delta .
\end{equation}
Pictorially, this reads:
\[
  \NB{\tikz[xscale=0.5, yscale =0.5]{  \begin{scope}
    \draw[very thin] (0, -0.2) arc (270: 240: 3) coordinate[pos=0.8] (a);
    \draw[very thin] (0, -0.2) arc (270: 300: 3);
    \draw[very thin] (a) arc (114: 66: 3);
    \draw[very thin] (-1, -2) .. controls +(0, 1) and + (0, -1) ..
    (-3,0) .. controls +(0, 1) and +(0, -1) ..  (-1, 2)
    coordinate[pos=0.5] (b);
      \fill[pattern=north east lines, pattern color= red, draw = none]
      (a) -- ($(a)+ (-0.1,-0.1)$) .. controls +(0.1, 0.6) and +( 0.6, 0)
      .. ($(b)+ (-0.1,-0.1)$) -- (b) .. controls +(0.6,0) and +( 0.1,0.6)
      .. (a);
    \draw[red] (a).. controls +(0.1, 0.6) and +( 0.6, 0) .. (b);

    \draw[densely dotted] (1, -2) arc (0:180:1cm and 0.3cm);
    \draw (1, -2) arc (0:-180:1cm and 0.3cm);
    \draw (1, 2) arc (0:360:1cm and 0.3cm);
    \draw[very thin] (-1, -2) .. controls +(0, 1) and + (0, -1) ..   (-3,0) .. controls +(0, 1) and +(0, -1) ..  (-1, 2);
    \draw[very thin] ( 1, -2) .. controls +(0, 1) and + (0, -1) ..   ( 3,0) .. controls +(0, 1) and +(0, -1) ..  ( 1, 2);
  \end{scope}}}=
  \NB{\tikz[xscale=0.5, yscale =0.5]{  \begin{scope}
    \draw[very thin] (0, -0.2) arc (270: 240: 3) coordinate[pos=0.8] (a);
    \draw[very thin] (0, -0.2) arc (270: 300: 3);
    \draw[very thin] (a) arc (114: 66: 3);
    \draw[very thin] (-1, -2) .. controls +(0, 1) and + (0, -1) ..
    (-3,0) .. controls +(0, 1) and +(0, -1) ..  (-1, 2)
    coordinate[pos=0.5] (b);
      \fill[pattern=north east lines, pattern color= red, draw = none]
      (a) -- ($(a)+ (0.1,0.1)$) .. controls +(0.1, 0.6) and +( 0.6, 0)
      .. ($(b)+ (0.1,0.1)$) -- (b) .. controls +(0.6,0) and +( 0.1,0.6)
      .. (a);
    \draw[red] (a).. controls +(0.1, 0.6) and +( 0.6, 0) .. (b);

    \draw[densely dotted] (1, -2) arc (0:180:1cm and 0.3cm);
    \draw (1, -2) arc (0:-180:1cm and 0.3cm);
    \draw (1, 2) arc (0:360:1cm and 0.3cm);
    \draw[very thin] (-1, -2) .. controls +(0, 1) and + (0, -1) ..   (-3,0) .. controls +(0, 1) and +(0, -1) ..  (-1, 2);
    \draw[very thin] ( 1, -2) .. controls +(0, 1) and + (0, -1) ..   ( 3,0) .. controls +(0, 1) and +(0, -1) ..  ( 1, 2);
  \end{scope}}}=
  \NB{\tikz[xscale=0.5, yscale =0.5]{  \begin{scope}[xscale=-1]
    \draw[very thin] (0, -0.2) arc (270: 240: 3) coordinate[pos=0.8] (a);
    \draw[very thin] (0, -0.2) arc (270: 300: 3);
    \draw[very thin] (a) arc (114: 66: 3);
    \draw[very thin] (-1, -2) .. controls +(0, 1) and + (0, -1) ..
    (-3,0) .. controls +(0, 1) and +(0, -1) ..  (-1, 2)
    coordinate[pos=0.5] (b);
      \fill[pattern=north west lines, pattern color= red, draw = none]
      (a) -- ($(a)+ (0.1,0.1)$) .. controls +(0.1, 0.6) and +( 0.6, 0)
      .. ($(b)+ (0.1,0.1)$) -- (b) .. controls +(0.6,0) and +( 0.1,0.6)
      .. (a);
    \draw[red] (a).. controls +(0.1, 0.6) and +( 0.6, 0) .. (b);

    \draw[densely dotted] (1, -2) arc (0:180:1cm and 0.3cm);
    \draw (1, -2) arc (0:-180:1cm and 0.3cm);
    \draw (1, 2) arc (0:360:1cm and 0.3cm);
    \draw[very thin] (-1, -2) .. controls +(0, 1) and + (0, -1) ..   (-3,0) .. controls +(0, 1) and +(0, -1) ..  (-1, 2);
    \draw[very thin] ( 1, -2) .. controls +(0, 1) and + (0, -1) ..   ( 3,0) .. controls +(0, 1) and +(0, -1) ..  ( 1, 2);
  \end{scope}}}=
  \NB{\tikz[xscale=0.5, yscale =0.5]{  \begin{scope}[xscale=-1]
    \draw[very thin] (0, -0.2) arc (270: 240: 3) coordinate[pos=0.8] (a);
    \draw[very thin] (0, -0.2) arc (270: 300: 3);
    \draw[very thin] (a) arc (114: 66: 3);
    \draw[very thin] (-1, -2) .. controls +(0, 1) and + (0, -1) ..
    (-3,0) .. controls +(0, 1) and +(0, -1) ..  (-1, 2)
    coordinate[pos=0.5] (b);
      \fill[pattern=north west lines, pattern color= red, draw = none]
      (a) -- ($(a)+ (-0.1,-0.1)$) .. controls +(0.1, 0.6) and +( 0.6, 0)
      .. ($(b)+ (-0.1,-0.1)$) -- (b) .. controls +(0.6,0) and +( 0.1,0.6)
      .. (a);
    \draw[red] (a).. controls +(0.1, 0.6) and +( 0.6, 0) .. (b);

    \draw[densely dotted] (1, -2) arc (0:180:1cm and 0.3cm);
    \draw (1, -2) arc (0:-180:1cm and 0.3cm);
    \draw (1, 2) arc (0:360:1cm and 0.3cm);
    \draw[very thin] (-1, -2) .. controls +(0, 1) and + (0, -1) ..   (-3,0) .. controls +(0, 1) and +(0, -1) ..  (-1, 2);
    \draw[very thin] ( 1, -2) .. controls +(0, 1) and + (0, -1) ..   ( 3,0) .. controls +(0, 1) and +(0, -1) ..  ( 1, 2);
  \end{scope}}}=0.
\]
In these cobordisms, the defect circle bounds the same region on both sides. We'll see in the next section that any cobordism with this property defines the zero map. More generally, if a cobordism admits a circle that intersects defect lines odd number of times, the map associated to such cobordism is zero, see Section~\ref{sec:ev}.

\section{Two evaluations of seamed surfaces}
 \label{sec:evaluation}
 
In this section we continue to use variables $\alpha_1,\alpha_2$ generating the ring $R_{\alpha}$ as above, with symmetric functions in $\alpha_1,\alpha_2$ giving us the subring $R=\Z[E_1,E_2]$. To connect the evaluation formulas below with those in~\cite{RW1,KR1} one should replace our $\alpha_1,\alpha_2$ by $X_1,X_2$ as in~\cite{RW1,KR1}  and replace our $X_1,X_2$, heavily used in Section~\ref{sec:algebrasl2}, by a different notation, for example $Y_1,Y_2$.

\subsection{An evaluation over \texorpdfstring{$R$}{R} with defect lines}
\label{sec:ev}
\newcommand{\seam}{\ensuremath{\Theta}}

\begin{dfn}
  A \emph{closed seamed surface} is a closed compact surface $F$
  equipped with a PL embedding in $\RR^3$ and with finitely many
  disjoint simple closed curves on $F$ and dots. Each of these curves
  comes with a co-orientation, that is, a preferred side. Such a curve
  is also called a \emph{seam} or a \emph{defect}. The set of seams of
  $F$ is denoted $\seam(F)$.
\end{dfn}
The set $\seam(F)$ of seams may be empty. Denote by $f(F)$ the set of
connected components of $F$ with all seams removed. Its elements are called
\emph{facets}, or, more precisely, \emph{open facets}. We consider both open
facets and their closures in $F$. The closure of a facet may contain
one or more seams. For simplicity, by a facet we usually mean a closed
facet.

Dots on $F$, if any, are placed away from the seams. A dot may float freely in its facet but cannot cross a seam.

Preferred side at a seam may be indicated by a short interval or an arrow pointing
from a point on the seam into the corresponding side. Alternatively, we can draw a whole comb of spaced out intervals from the seam and into that side.

\begin{figure}[ht]
  \centering \NB{\tikz[]{\input{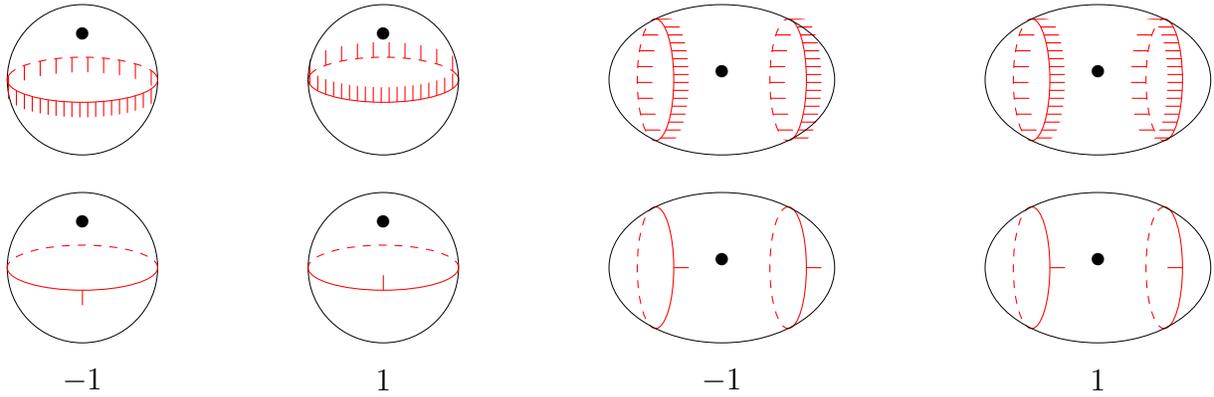}}}
  \caption{Four examples of seamed and dotted spheres represented with both  comb and
  segment notation. Their $\kup{\cdot}$-evaluation is given at the bottom. See example~\ref{example_comp} for  details in evaluating the fourth diagram.}
  \label{fig:exa-seamed-surface}
\end{figure}

Using the identities for neck-cutting (\ref{eq_neck_cut}), dot reduction (\ref{eq_dot_red}), removal of contractible seam (\ref{eq:remove-1-circle}), and evaluation of a seamless sphere with at most one dot (\ref{eq:spheres}), one
can define a \emph{combinatorial evaluation} of closed seamed
surfaces. That is a map from the set of closed seamed surfaces to the ring $R$. One would need to check that this is well-defined, which follows
from a straightforward computation. 

A feature of such reductions is that they may not be implementable while keeping all surfaces in the intermediate steps embedded in $\RR^3$. One example is to take a torus embedded in $\RR^3$ as the boundary of a knotted solid torus and add several seamed circles in parallel along the longitude of the torus. The first step in the  combinatorial evaluation is to do surgeries along annuli separating seamed circles on the torus. This step cannot be done in a natural local way inside $\RR^3$ and requires forgetting the embedding but, at least in some similar cases, making sure that all intermediate surfaces are orientable. This is not a serious obstacle, though.  

Instead, to show that the map is well-defined, we now exhibit a closed
formula for this combinatorial evaluation.

Two facets $f_1,f_2$ are called \emph{adjacent} if they share at least
one seam. A facet is called \emph{self-adjacent} if it comes to some
seam from both sides.

We call a seamed surface \emph{even} iff the union of its seams represents zero
in $\mathrm{H}_1(F,\Z/2)$. Otherwise a surface is called \emph{odd} (and will evaluate to zero). A seamed surface with a self-adjacent facet is necessarily odd. We
can refine this terminology and call each connected component of $F$
\emph{even} or \emph{odd} depending on this property of its
seams. Note that a component $F_k$ is \emph{even} if each circle in
generic position to the seamed circles in $F_k$ has even number of
intersections with the union of seamed circles.

\begin{dfn}
  A \emph{checkerboard coloring} of a closed seamed surface $F$ is a
  map $c: f(F) \to \{1,2\}$ from the set of facets of $F$ to the two-element set $\{1,2\}$  such that, along every seam, its two
  sides have different colors. The set of checkerboard colorings
  of $F$ is denoted $\adm(F)$. 
\end{dfn}

A coloring $c$ of the facets
induces a coloring of the seams, by assigning to a seam the color of the facet into which its co-orientation points. For a seamed circle $\gamma$ denote its induced coloring by  $c(\gamma)\in\{1,2\}$. 

Note that $F$ with a self-adjacent facet has no checkerboard coloring,
$\adm(F)=\emptyset$. More generally, $F$ admits a checkerboard
coloring iff it is an even seamed surface.

Denote by $|F|$ the number of connected components of $F$. An even
surface admits $2^{|F|}$ checkerboard colorings.
Denote by $\theta(F)=|\seam(F)|$ the number of seams of $F$. 

For $i= 1,2$  denote by
$F_i(c)$ the union of facets of $F$ colored by $i$. Boundary circles of facets are included in both $F_1(c),F_2(c)$. 
Denote 
by $\seam_i(c)$ the set 
of seams of $F$ colored by $i$ and by $\theta_i(c)$ its cardinality, 
$\theta_i(c)=|\Theta_i(c)|$. We
have $\seam(c)=\seam_1(c)\sqcup \seam_2(c)$ and 
\[ \theta(F) = \theta_1(c)+ \theta_2(c)
\] 
is the number of singular circles of $F$, which does not depend on the coloring. 

Denote by $d_i(c)$ the number of dots on facets of 
color $i$ for a coloring $c$ and by $d(F)$ the number of dots on $F$. Necessarily, $d(F)=d_1(c)+d_2(c)$ for any coloring $c$. Define the degree 
\begin{equation}
    \deg(F) = -\chi(F)+ 2 d(F).
\end{equation}
Since $F_i(c)\subset F\subset \RR^3$, and $F$ is a closed surface in $\RR^3$, both $F$ and $F_i(c)$ are orientable.   Hence, the Euler
characteristic of $F_i(c)$ has the same parity as the number of connected
component of its boundary. Define 
\begin{equation} \label{eq_s_bar}
s(F,c) =\theta_1(c) +\frac{\chi(F_1(c))
+ \theta(F)}2 =\theta_1(c)+\frac{\chi(\overline{F}_1(c))}{2}\in \ZZ, 
\end{equation} 
where  $\chi(\overline{F}_1(c))=\chi(F_1(c))
+ \theta(F)$ is the Euler characteristic of the closed surface 
$\overline{F}_1(c)$ given by attaching  2-disks to $F_1(c)$ along all boundary circles. 

Recall the rings $R=\Z[E_1,E_2]$, $R_{\alpha}=\Z[\alpha_1,\alpha_2]$, and $R_{\alpha\mcD}=R_{\alpha}[(\alpha_1-\alpha_2)^{-1}]$, the localization of $R_{\alpha}$ at $\alpha_1-\alpha_2$. There are ring inclusions 
\[ R \subset R_{\alpha}\subset R_{\alpha\mcD}.
\] 
Finally, the \emph{$c$-evaluation}
$\kup{F,c}$ (or the evaluation of $F$ at $c$) and the \emph{evaluation} $\kup{F}$ are given by the
following formulas:

\begin{align}
  \label{eq_ev1}
  \kup{F, c}&= 
  (-1)^{s(F,c)}\frac{\alpha_1^{d_1(c)}\alpha_2^{d_2(c)}}{(\alpha_2 - \alpha_1)^{\chi(F)/2}}\in R_{\alpha\mcD},  \\ \label{eq_ev2}
  \kup{ F }&= \sum_{c\in \adm(F)} \kup{F,c}. 
\end{align}
The denominator term does not depend on a choice of coloring. 

The idea to extend the Robert--Wagner foam evaluation~\cite{RW1} to the case when $F_i(c)$ are not closed surfaces by capping their boundary circles with disks and taking the Euler characteristic of the resulting surfaces $\overline{F}_i(c)$ was proposed by Yakov Kononov \cite{KononovPC}, who also pointed out that  such closure constructions are used implicitly in the physics TQFT literature.  The  alternative is to use the Euler characteristic of $F_1(c)$, which may be an odd integer. This requires adding $\sqrt{-1}$ to the ground ring, see Section~\ref{subsec:ev}. A deformation of evaluation (\ref{eq_ev1}) appears in~\cite{InprepKhovKitchKon}, with much larger 
 state spaces associated to collections of marked circles in the plane.

\emph{Remark:} Equation (\ref{eq_ev1}) has $\alpha_2-\alpha_1$ in the denominator, compared to $X_i-X_j$ for $i<j$ in~\cite{RW1}. This is done to make the 2-sphere with one dot evaluate to $1$ rather than $-1$. The same answer can be achieved by changing to $\alpha_1-\alpha_2$ in the denominator of (\ref{eq_ev1}) and to $\overline{F}_2(c)$ in place of $\overline{F}_1(c)$ in formula (\ref{eq_s_bar}), so that 
\begin{equation}
  \label{eq_ev3}
  \kup{F, c} = 
  (-1)^{\theta_1(c)+\chi(\overline{F}_2(c))/2}\frac{\alpha_1^{d_1(c)}\alpha_2^{d_2(c)}}{(\alpha_1 - \alpha_2)^{\chi(F)/2}}. 
\end{equation}

If $F$ and $G$ are closed seamed surfaces, it follows directly from the
definitions that 
\begin{equation} \label{eq:product}
     \kup{F\sqcup G} = \kup{F} \cdot \kup{G}.
\end{equation}

\begin{lem}[{Compare with \cite[Proposition 2.18]{RW1} and \cite[Theorem 2.17]{KR1}}] \label{lem:ev-poly}
  $\kup{F}$ is an homogeneous symmetric
  polynomial in $\alpha_1$ and $\alpha_2$ of degree $\deg(F)$, for any seamed surface $F$.
\end{lem}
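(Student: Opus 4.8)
The plan is to show separately that $\kup{F}$ is a polynomial (no denominators survive), that it is symmetric in $\alpha_1,\alpha_2$, and that it is homogeneous of the claimed degree. The degree count is the easiest: each term $\kup{F,c}$ has $q$-degree equal to $2d_1(c)+2d_2(c)-2\cdot\tfrac{\chi(F)}2 = 2d(F)-\chi(F) = \deg(F)$, using $\deg(\alpha_i)=2$ and $\deg((\alpha_2-\alpha_1)^{-1})=-2$; since every summand in \eqref{eq_ev2} has the same degree, $\kup{F}$ is homogeneous of degree $\deg(F)$ provided it lies in $R_\alpha$. So the two real points are polynomiality and symmetry.

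For symmetry, I would use the free involution on the set $\adm(F)$ of checkerboard colorings given by swapping the two colors, $c\mapsto \bar c$. Under this swap, $d_1(\bar c)=d_2(c)$, $\theta_1(\bar c)=\theta_2(c)$, and $\overline{F}_1(\bar c)=\overline{F}_2(c)$, so comparing \eqref{eq_ev1} with the alternative formula \eqref{eq_ev3} one sees that $\sigma_\alpha$ (the transposition $\alpha_1\leftrightarrow\alpha_2$) sends $\kup{F,c}$ to $\kup{F,\bar c}$ up to the sign discrepancy between the two formulas, which is exactly accounted for by $\chi(F)/2 = \theta_1(c)+\theta_2(c)$ being split differently; summing over all of $\adm(F)$, which is stable under $c\mapsto\bar c$, gives $\sigma_\alpha\kup{F}=\kup{F}$. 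I would spell out the sign bookkeeping: one needs $(-1)^{s(F,c)}/(\alpha_2-\alpha_1)^{\chi(F)/2}$ applied with $\sigma_\alpha$ to equal $(-1)^{s(F,\bar c)}/(\alpha_2-\alpha_1)^{\chi(F)/2}$, i.e. that $s(F,c)$ and $s(F,\bar c)$ have opposite parity precisely when $\chi(F)/2$ is odd — this follows from $s(F,c)+s(F,\bar c)=\theta(F)+\tfrac{\chi(F_1(c))+\chi(F_2(c))}2+\theta(F)=\tfrac{\chi(F)}2 \pmod 2$ after using $\chi(F_1)+\chi(F_2)=\chi(F)$ (facets glued along the $\theta(F)$ seam circles, each of Euler characteristic $0$, counted in both).

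Polynomiality is the main obstacle. The individual terms $\kup{F,c}$ genuinely have $(\alpha_2-\alpha_1)^{\chi(F)/2}$ in the denominator, which can be a negative power, so one must show the sum over colorings clears denominators. I see two routes. The first, closer to \cite{RW1,KR1}, is a direct local argument: pick a seam circle $\gamma$, and partition $\adm(F)$ into pairs of colorings that agree away from the two facets touching $\gamma$; show each such pair-sum, and more globally the full sum, is a symmetric Laurent polynomial whose only possible pole is at $\alpha_1=\alpha_2$, and then bound the pole order using the combinatorial evaluation: since $\kup{F}$ equals the combinatorial evaluation built from neck-cutting \eqref{eq_neck_cut}, dot reduction \eqref{eq_dot_red}, contractible-seam removal \eqref{eq:remove-1-circle}, and sphere evaluations \eqref{eq:spheres}, all of which land in $R\subset R_\alpha$, the result is automatically in $R$ — hence in particular a polynomial. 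Actually this observation gives the cleanest proof: \emph{if} one grants (as the text asserts, pending the well-definedness that this very Lemma helps establish) that $\kup{F}$ reproduces the combinatorial evaluation, then $\kup{F}\in R$ and symmetry is automatic too. To avoid circularity I would instead give the self-contained argument: induct on $\theta(F)$, using that removing a seam via the relevant skein relation changes the closed-up Euler characteristics $\chi(\overline{F}_i)$ in a controlled way and matches the change in the coloring sum, the base case $\theta(F)=0$ being a disjoint union of closed surfaces evaluated by \eqref{eq:spheres} and the torus/handle formulas, all manifestly in $R$. The delicate step, and the one I expect to absorb most of the work, is checking that the $(-1)^{s(F,c)}$ signs are compatible with these surgeries so that the inductive reduction of $\kup{F}$ agrees termwise with the combinatorial reduction; this is the analogue of the sign verification in \cite[Proposition 2.18]{RW1}.
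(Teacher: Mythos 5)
Your degree count and your symmetry argument are both correct and essentially the paper's: the paper likewise deduces symmetry from the identity $\tau(\kup{F,c})=\kup{F,\tau(c)}$ for the transposition acting simultaneously on colorings and on $\alpha_1,\alpha_2$, and your congruence $s(F,c)+s(F,\bar c)\equiv \chi(F)/2 \pmod 2$ is exactly the verification of that identity.

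The gap is in polynomiality, which you rightly single out as the only real issue but then do not actually prove. Your first route (quoting the combinatorial evaluation) is circular, as you yourself note, and your second route is an induction on $\theta(F)$ whose crucial step --- compatibility of the signs $(-1)^{s(F,c)}$ with the seam-removing surgeries --- you explicitly leave unchecked; it would also require first establishing the local relations of the subsequent lemma and a termination argument for the reduction. None of this is needed. By multiplicativity (\ref{eq:product}) one reduces to connected $F$; a connected closed surface embedded in $\RR^3$ is orientable, hence $\chi(F)\le 2$ with equality only for the sphere. When $\chi(F)\le 0$ the factor $(\alpha_2-\alpha_1)^{-\chi(F)/2}$ sits in the numerator and there is no denominator to clear. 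The only honest denominator occurs for $F\cong\SS^2$, where $H_1(\SS^2,\ZZ/2)=0$ forces $F$ to be even with exactly two colorings $c,c'$ related by the color swap, and the two terms of (\ref{eq_ev2}) combine to
\[
(-1)^{s(F,c)}\,\frac{\alpha_1^{d_1(c)}\alpha_2^{d_2(c)}-\alpha_1^{d_2(c)}\alpha_2^{d_1(c)}}{\alpha_2-\alpha_1},
\]
which is manifestly a symmetric polynomial. So the lemma follows from the two-coloring computation on the sphere alone; as written, your proof of the polynomiality claim is incomplete because you missed this structural shortcut and left the substitute argument's key sign verification as an assertion.
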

\begin{proof} Let us show that $\kup{F}$ is a polynomial. Because of (\ref{eq:product}),
  we can assume that $F$ is connected. Since $F$ is embedded in
  $\RR^3$, it is orientable. In particular, $\chi(F) \leq 0$ unless
  $F$ is a sphere, and this is the only case for which the statement is
  non-trivial. Since $H_1(\SS^2, \ZZ_2)$ is trivial, $F$ is even
  regardless of its seams and admits exactly two checkerboard
  colorings. Denote them $c$ and $c'$.
  One has
  $d_i(c') = d_{3-i}(c)$, $\theta_i(c') = \theta_{3-i}(c)$ and
  $F_{i}(c') = F_{3-i}(c)$  for $i=1,2$.
  Hence,
  \begin{align*}
    \kup{F} &=\frac{(-1)^{s(F,c)}}{\alpha_2    -\alpha_1}
    \left(
    \alpha_1^{d_1(c)}\alpha_2^{d_2(c)} + (-1)^{s(F,c') + s(F,c)}
      \alpha_1^{d_1(c')}\alpha_2^{d_2(c')} 
      \right) \\
     &=\frac{(-1)^{s(F,c)}}{\alpha_2-\alpha_1}
     \left(
     \alpha_1^{d_1(c)} \alpha_2^{d_2(c)} + (-1)^{\theta_1(c) + \theta_2(c) +
      \frac{\chi(F_1(c) + \chi(F_2(c))}{2} +\theta(F) }
      \alpha_1^{d_2(c)}\alpha_2^{d_1(c)} 
      \right) \\
     &=\frac{(-1)^{s(F,c)}}{\alpha_2-\alpha_1}
     \left(
     \alpha_1^{d_1(c)}\alpha_2^{d_2(c)} + (-1)^{\frac{\chi(F)}{2}}
      \alpha_1^{d_2(c)}\alpha_2^{d_1(c)} \right) \\
    &= {(-1)^{s(F,c)}}\frac{\alpha_1^{d_1(c)}
      \alpha_2^{d_2(c)} -
      \alpha_1^{d_2(c)}\alpha_2^{d_1(c)}}{{\alpha_2 -\alpha_1}},
  \end{align*} 
  and $\kup{F}$ is a symmetric polynomial in $\alpha_1$ and $\alpha_2$.
  The statement about the degree follows directly from the definition of
  degree of a seamed surfaces and of the $c$-evaluation.

That $\kup{F}$ is symmetric follows directly from
the identity $\tau(\kup{F,c}) = \kup{F,\tau(c)}$ for the transposition $\tau=(1,2)$ acting on both the set of colorings and on the ring of rational functions in $\alpha_1,\alpha_2$. 
\end{proof}
Thus, $\kup{F}\in R$ for any seamed surface $F$. 

\emph{Remark:} If $F$ has no checkerboard colorings ($F$ has an odd
component), then $\langle F \rangle = 0$.

\begin{exa} \label{example_comp}  In the following computation, colorings are depicted
  directly on seamed surfaces: facets with color 1 are hashed and
  facets with color $2$ are plain white.
  \begin{align*}
    \kup{\NB{\tikz[scale =0.8]{\begin{scope}
    \clip (-1.6, -1.1) rectangle (1.6, 1.1);
  \draw[white, very thin] (1.5,0) arc (0:360: 1.5cm and 1cm)
  coordinate[pos=0.15] (a) coordinate[pos=0.35] (b);
  \draw[red]
  let \p{a}=(a)
  in
  (a) arc (90: 270: {0.3*\y{a}} and \y{a})
  (b) arc (90: 270: {0.3*\y{a}} and \y{a});
  \fill[pattern=horizontal lines, pattern color= red, draw = none]
  let \p{a}=(a)
  in
  (a) arc  (90: 270: {0.3*\y{a}} and \y{a}) -- ++(-0.2,  0) arc (270:90:{0.3*\y{a}} and \y{a}) --cycle
  (b) arc  (90: 270: {0.3*\y{a}} and \y{a}) -- ++(0.2,  0) arc
  (270:90:{0.3*\y{a}} and \y{a}) --cycle;
  \draw [line width =3cm, white, dashed ] (0,1) -- (0, -1);  
  \draw[red]
  let \p{a}=(a)
  in
  (a) arc (90: -90: {0.3*\y{a}} and \y{a})
  (b) arc (90: -90: {0.3*\y{a}} and \y{a});
  \fill[pattern=horizontal lines, pattern color= red, draw = none]
  let \p{a}=(a)
  in
  (a) arc  (90: -90: {0.3*\y{a}} and \y{a}) -- ++(-0.2,  0) arc (-90:90:{0.3*\y{a}} and \y{a}) --cycle
  (b) arc  (90: -90: {0.3*\y{a}} and \y{a}) -- ++(0.2,  0) arc
  (-90:90:{0.3*\y{a}} and \y{a}) --cycle;
  \draw[black, very thin] (1.5,0) arc (0:360: 1.5cm and 1cm);
  \node at (0, 0.1) {$\bullet$};
\end{scope}
}}} &=
    \kup{\NB{\tikz[scale =0.8]{\begin{scope}
  \clip (-1.6, -1.1) rectangle (1.6, 1.1);
  \draw[white, very thin] (1.5,0) arc (0:360: 1.5cm and 1cm)
  coordinate[pos=0.15] (a) coordinate[pos=0.35] (b);
  \draw[red]
  let \p{a}=(a)
  in
  (a) arc (90: 270: {0.3*\y{a}} and \y{a})
  (b) arc (90: 270: {0.3*\y{a}} and \y{a});
  \fill[pattern=horizontal lines, pattern color= red, draw = none]
  let \p{a}=(a)
  in
  (a) arc  (90: 270: {0.3*\y{a}} and \y{a}) -- ++(-0.2,  0) arc (270:90:{0.3*\y{a}} and \y{a}) --cycle
  (b) arc  (90: 270: {0.3*\y{a}} and \y{a}) -- ++(0.2,  0) arc
  (270:90:{0.3*\y{a}} and \y{a}) --cycle;
  \draw [line width =3cm, white, dashed ] (0,1) -- (0, -1);  
  \draw[red]
  let \p{a}=(a)
  in
  (a) arc (90: -90: {0.3*\y{a}} and \y{a})
  (b) arc (90: -90: {0.3*\y{a}} and \y{a});
  \fill[pattern=horizontal lines, pattern color= red, draw = none]
  let \p{a}=(a)
  in
  (a) arc  (90: -90: {0.3*\y{a}} and \y{a}) -- ++(-0.2,  0) arc (-90:90:{0.3*\y{a}} and \y{a}) --cycle
  (b) arc  (90: -90: {0.3*\y{a}} and \y{a}) -- ++(0.2,  0) arc
  (-90:90:{0.3*\y{a}} and \y{a}) --cycle;
  \draw[black, very thin] (1.5,0) arc (0:360: 1.5cm and 1cm);
  \node at (0, 0.1) {$\bullet$};
  \fill[pattern = north east lines, opacity=0.3]
  let \p{a}=(a)
  in
  (\x{a}, \y{a}) arc (54: 126:1.5cm and 1cm)  arc (90:270:{0.3*\y{a}}
  and   \y{a}) arc (234: 306: 1.5cm and 1cm) arc (270: 90: {0.3*\y{a}} and
  \y{a});
  \fill[pattern = north west lines, opacity=0.3]
  let \p{a}=(a)
  in
  (a) arc (54: 126:1.5cm and 1cm)  arc (90:-90: {0.3*\y{a}} and
  \y{a}) arc (234: 306:1.5cm and 1cm) arc (-90: 90: {0.3*\y{a}} and
  \y{a});
\end{scope}
}}} +
    \kup{\NB{\tikz[scale =0.8]{\begin{scope}
  \clip (-1.6, -1.1) rectangle (1.6, 1.1);
  \draw[white, very thin] (1.5,0) arc (0:360: 1.5cm and 1cm)
  coordinate[pos=0.15] (a) coordinate[pos=0.35] (b);
  \draw[red]
  let \p{a}=(a)
  in
  (a) arc (90: 270: {0.3*\y{a}} and \y{a})
  (b) arc (90: 270: {0.3*\y{a}} and \y{a});
  \fill[pattern=horizontal lines, pattern color= red, draw = none]
  let \p{a}=(a)
  in
  (a) arc  (90: 270: {0.3*\y{a}} and \y{a}) -- ++(-0.2,  0) arc (270:90:{0.3*\y{a}} and \y{a}) --cycle
  (b) arc  (90: 270: {0.3*\y{a}} and \y{a}) -- ++(0.2,  0) arc
  (270:90:{0.3*\y{a}} and \y{a}) --cycle;
  \draw [line width =3cm, white, dashed ] (0,1) -- (0, -1);  
  \draw[red]
  let \p{a}=(a)
  in
  (a) arc (90: -90: {0.3*\y{a}} and \y{a})
  (b) arc (90: -90: {0.3*\y{a}} and \y{a});
  \fill[pattern=horizontal lines, pattern color= red, draw = none]
  let \p{a}=(a)
  in
  (a) arc  (90: -90: {0.3*\y{a}} and \y{a}) -- ++(-0.2,  0) arc (-90:90:{0.3*\y{a}} and \y{a}) --cycle
  (b) arc  (90: -90: {0.3*\y{a}} and \y{a}) -- ++(0.2,  0) arc
  (-90:90:{0.3*\y{a}} and \y{a}) --cycle;
  \draw[black, very thin] (1.5,0) arc (0:360: 1.5cm and 1cm);
  \node at (0, 0.1) {$\bullet$};
  \fill[pattern = north east lines, opacity=0.3]
  let \p{a}=(a)
  in
  (\x{a}, \y{a}) arc (54: -54:1.5cm and 1cm)  arc (270:90:{0.3*\y{a}}
  and \y{a});
  \fill[pattern = north west lines, opacity=0.3]
  let \p{a}=(a)
  in
  (\x{a}, \y{a}) arc (54: -54:1.5cm and 1cm)  arc (-90:90:{0.3*\y{a}}   and   \y{a});

  \draw[black, very thin] (1.5,0) arc (0:360: 1.5cm and 1cm);
  \node at (0, 0.1) {$\bullet$};
  \fill[pattern = north east lines, opacity=0.3]
  let \p{a}=(a)
  in
  (b) arc (126: 234:1.5cm and 1cm)  arc (270:90:{0.3*\y{a}}
  and \y{a});
  \fill[pattern = north west lines, opacity=0.3]
  let \p{a}=(a)
  in
  (b) arc (126: 234:1.5cm and 1cm)  arc (-90:90:{0.3*\y{a}}   and   \y{a});
\end{scope}
}}} \\ &=
    (-1)^{2 + \frac{0+2}{2}} \frac{\alpha_1}{(\alpha_2 -
    \alpha_1)^{\frac{2}{2}}} + (-1)^{0 + \frac{2+2}{2}}
\frac{\alpha_2}{(\alpha_2 -
    \alpha_1)^{\frac{2}{2}}} 
    \\ &= \frac{-\alpha_1}{\alpha_2 -
    \alpha_1}+\frac{\alpha_2}{\alpha_2 -
    \alpha_1} =1,
  \end{align*}
  using formulas (\ref{eq_s_bar})-(\ref{eq_ev2}). 
  In more details, $\chi(F)=\chi(\SS^2)=2$, so the denominator is $\alpha_2-\alpha_1$ in both terms. For the first coloring $c_1$ (the annulus facet has color $1$), 
  \[ \theta_1(c_1)= 2, \  \chi(\overline{F}_1(c_1))=\chi(\SS^2) = 2, \  s(F,c_1)= 2+1=3, \ d_1(c_1)=1, \ d_2(c_1)=0. 
  \] 
  For the second coloring $c_2$ (the annulus facet has color $2$), 
  \[ \theta_1(c_2)= 0, \  \chi(\overline{F}_1(c_2))=\chi(\SS^2\sqcup \SS^2) = 4, \  s(F,c_2)= 0+2=2, \ d_1(c_2)=0, \ d_2(c_2)=1, 
  \] 
  so the contributions of $\kup{F,c_1}$ and $\kup{F,c_2}$ to the sum are as above, and $\kup{F}=1$.  
\end{exa}

\begin{lem} \label{lem:ev-loc-rel}
  Evaluation of seamed surfaces satisfies the following local relations:
  \begingroup
\allowdisplaybreaks
  \begin{gather}\label{eq:neck-cutting-kup}
    \kup{\cfCircleId{0.5}{}} =
    \kup{\cfCircleSplit{0.5}{\bullet}{}} +
    \kup{\cfCircleSplit{0.5}{}{\bullet}} -E_1
    \kup{\cfCircleSplit{0.5}{}{}}, \\[4pt]
\label{eq:neck-cutting-kup-2}    \kup{\NB{\tikz[scale=0.5]{\begin{scope}
  \begin{scope}[yshift= 1.5cm]
    \draw (1,0) arc (0:360:1cm and 0.3cm)coordinate [pos=0](e) coordinate [pos=0.5](f);
  \end{scope}
  \begin{scope}[yshift =-1.5cm]
    \draw (1,0) arc (0:-180:1cm and 0.3cm)coordinate [pos=0](g) coordinate [pos=1](h);
    \draw[densely dotted] (1,0) arc (0:180:1cm and 0.3cm); 
  \end{scope}
  \draw[very thin] (e) -- (g);
  \draw[very thin] (f) -- (h);
  \fill[pattern=vertical lines, pattern color= red, draw = none] (1,0) arc  (0:180:1cm and 0.3cm) -- (-1, 0.2) arc (180:0: 1cm and 0.3cm) --cycle;
  \draw[red] (1,0) arc  (0:180:1cm and 0.3cm);
  \draw [line width =5mm, white, dashed ] (0.9,0.2) -- (-0.9, 0.20);  
  \fill[pattern=vertical lines, pattern color= red, draw = none] (1,0) arc  (0:-180:1cm and 0.3cm) -- (-1, 0.2) arc (-180:0: 1cm and 0.3cm) --cycle;
  \draw[red] (1,0) arc  (0:-180:1cm and 0.3cm);
\end{scope}}}}= 
     \kup{\cfCircleSplit{0.5}{}{\bullet}} -
 \kup{\cfCircleSplit{0.5}{\bullet}{}}, 
    \\[4pt]
    \label{eq:dot-migration-kup-1}
    \kup{\cfSeamSquare{0.6}{\bullet}{}} +
    \kup{\cfSeamSquare{0.6}{}{\bullet}} =
    E_1 \kup{\cfSeamSquare{0.6}{}{}}, \\[4pt]
    \label{eq:dot-migration-kup-2}
    \kup{\cfSeamSquare{0.6}{\bullet}{\bullet}} =
    E_2 \kup{\cfSeamSquare{0.6}{}{}},\\[4pt]
    \label{eq:dot_reduction}
    \kup{\cfDecSquare{0.6}{ \bullet\ \bullet}} =
    E_1\kup{\cfDecSquare{0.6}{\bullet}} -
    E_2\kup{\cfDecSquare{0.6}{}}\\[4pt]
    \label{eq:circle-removal-1}
    \kup{\cfDecSquare{0.6}{\circlein{0.7}}} = -
  \kup{\cfDecSquare{0.6}{\circleout{0.7}}} 
  = - \kup{\cfDecSquare{0.6}{}},
  \\[4pt]
   \label{eq:kup-ev-sphere}
   \kup{\cfSphere{0.5}{}} = 0, \qquad
      \kup{\cfSphere{0.5}{\bullet}} = 1.
  \end{gather}
  \endgroup
\end{lem}
\begin{proof}
  The only identities which is not straightforward are
  (\ref{eq:neck-cutting-kup}) and    (\ref{eq:neck-cutting-kup-2}). The proof of these identities are similar
  to that of \cite[Proposition 4.7]{KhovKitch} and \cite[Proposition 2.22]{KR1}. We only prove (\ref{eq:neck-cutting-kup}),  the other one is analogous.

  Let us denote by $F$ the seam surface on the left-hand side of the
  identity and by $G_t$, $G_b$ and $G$ the seamed surfaces on the right-hand
  side, where $t$ and $b$ stand for dot on  \textbf{t}op and dot on
  \textbf{b}ottom. As surfaces, $G_t$ and $G_b$ and $G_t$ are identical,
  they only differ by their dot distributions.  There is a canonical
  one-to-one correspondence between $\adm(G)$, $\adm(G_b)$ and $\adm(G_t)$. Let
  $c$ be an element of $\adm(G)$. There are $4$ possible
  local models for $c$. They are given in Figure~\ref{fig:col-circle}.
  \begin{figure}[ht]
    \centering
    \NB{\tikz[scale=0.6]{\input{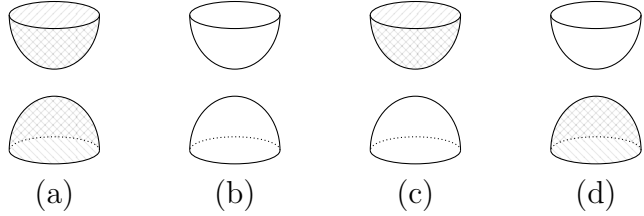}}}
    \caption{The four possible local models for an coloring $c$ of
    $G$. Hash means facet has color $1$ while solid white means
    facet has color $2$.}
    \label{fig:col-circle}
  \end{figure}

  In cases (c) (resp.{} (d)), one has
  \begin{align*}
    &\kup{G_t, c}= \alpha_1\kup{G,c}
    \quad\text{and}\quad 
    \kup{G_b, c}= \alpha_2\kup{G,c}
    \\ (\text{resp.{}}
        &\kup{G_t, c}= \alpha_1\kup{G,c}
    \quad\text{and}\quad 
    \kup{G_b, c}= \alpha_2\kup{G,c})
  \end{align*}
  therefore, in both of these cases, $\kup{G_t, c} +  \kup{G_b, c} 
  -E_1 \kup{G,c} = 0$.
  The admissible colorings of $G$ of types (a) and (b) are in a one-to-one
  correspondence with admissible colorings of $F$. Let $c$ be a
  coloring of type (a) of $G$, with the corresponding coloring of $F$ 
  still denoted by $c$. On the one hand, one has:
  \begin{gather*}
    \theta_1(G,c) = \theta_1(F,c), \quad \chi(G) = \chi(F)+2, \quad \chi(G_1(c)) =
    \chi(F_1(c)) +2,
     \quad
    \theta(G) = \theta(F),
  \end{gather*}
  On the other hand,  $\kup{G_t, c} = \kup{G_b,c} = \alpha_1
  \kup{G,c}$. Hence
  \[
\kup{G_t,c} + \kup{G_b,c} - E_1\kup{G,c} = (\alpha_1
-\alpha_2)\kup{G,c}= - \frac{\alpha_1 - \alpha_2}{\alpha_2 - \alpha_1}
\kup{F,c} = \kup{F,c}.
  \]
   If the coloring $c$ is of type (d), a similar computation gives as well 
  $\kup{F,c} = \kup{G_t,c} + \kup{G_b,c} - E_1 \kup{G,c}$.
  Summing over all admissible colorings, one obtains $\kup{F} =
  \kup{G_t} + \kup{G_t} - E_1\kup{G}$.
\end{proof}

\vspace{0.1in}

Formulas (\ref{eq:neck-cutting-kup}), (\ref{eq:dot_reduction}), and (\ref{eq:kup-ev-sphere}) coincide with the corresponding formulas for the 2-dimensional TQFT assigned to the Frobenius pair $(R,A)$, see Section~\ref{sec_road_map}. 

As in Section~\ref{sec_road_map}, one can introduce the hollow and star  dots, which will satisfy the same relations as earlier.  

We'll see in Section~\ref{sec:categories} that the state spaces associated to collections of planar circles in this theory coincide with those for the $(R,A)$ TQFT. Seams  correspond to $\sigma$-defect lines in Section~\ref{sec_inv_d}. Allowing them to end on the boundary enriches $(R,A)$-TQFT, see examples at the end of Section~\ref{sec:ssfunc}.

\vspace{0.1in}

\subsection{An evaluation over \texorpdfstring{$R_{\omega}$}{R omega} with defect lines}
\label{subsec:ev}

Instead of completing the surface $F_1(c)$ to a closed surface $\overline{F}_1(c)$, which has Euler characteristic divisible by two, one can keep $\chi(F_1(c))$ in the evaluation formulas (\ref{eq_s_bar}), 
(\ref{eq_ev1}) at the cost of adding $\sqrt{-1}$ to the ground ring $R$ to make sense of $(-1)^{\chi(F_1(c))/2}.$ 

Denote by 
$\ZZ_{\omega}=\ZZ[\omega]/(\omega^2+1)$ the ring of Gaussian integers and let  
\begin{equation}
R_{\omega}=\ZZ_{\omega}\otimes_{\ZZ} R=\ZZ_{\omega}[E_1,E_2]
\end{equation}
be the  ring  obtained from $R$ by formally adding a square root $\omega$ of $-1$.

Furthermore, denote $A_{\omega} = R_{\omega}\otimes_R A$.
The  pair $(R_{\omega},A_{\omega})$ is a Frobenius extension of rank two, with $\{1,X\}$ a basis of $A_{\omega}$ as a free graded $R_{\omega}$-module.

We give an
alternative evaluation of seamed surfaces in this new algebraic
context, using the ring $R_{\omega}$, together with the rings 
\[ R_{\omega\alpha} = \ZZ_{\omega}\otimes_{\ZZ} R_{\alpha} = \Z[\alpha_1,\alpha_2,\omega]/(\omega^2+1)
\] 
and 
\[ R_{\omega\alpha\mcD} = R_{\alpha\mcD}[\omega]/(\omega^2+1) = \Z_{\omega}[\alpha_1,\alpha_2,(\alpha_1-\alpha_2)^{-1}].
\] 
There are ring inclusions
\[ R_{\omega}\subset R_{\omega\alpha}
\subset R_{\omega\alpha\mcD}.
\] 
Define the evaluation of $F$ at a coloring $c$ and the overall evaluation of $F$ by 
\begin{align}
  \label{eq:evo-col}
  \kupo{F,c} &:= \omega^{2\theta_1(c) +
  \chi(F_1(c))}\frac{\alpha_1^{n_1(c)}\alpha_2^{n_2(c)}}{(\alpha_2 -
  \alpha_1)^{\chi(F)/2}} = \omega^{-\theta(F)}\kup{F,c}, 
  \\ \label{eq:evo} \kupo{ F }&:=
  \sum_{c\in \adm(F)} \kupo{ F,c} = \omega^{-\theta(F)}\kup{F}.
\end{align}
Note that $ \kupo{F,c}\in R_{\omega\alpha\mcD}$ 
while $\kupo{F}\in R_{\omega}$ for any  seamed surface $F$. 
Thus, $\kup{F}$ is an homogeneous symmetric polynomial in $\alpha_1$ and
$\alpha_2$ of degree $\deg(F)$ with coefficients in the ring of Gaussian integers $\ZZ_{\omega}=\ZZ[\omega]/(\omega^2+1)$. 

\begin{lem} \label{lem:evo-loc-rel}
  Evaluation of seamed surfaces satisfies the following local relations:
  \begingroup
\allowdisplaybreaks
      \begin{gather}\label{eq:neck-cutting-kup-omega}
    \kup{\cfCircleId{0.5}{}}_{\omega} =
    \kup{\cfCircleSplit{0.5}{\bullet}{}}_{\omega} +
    \kup{\cfCircleSplit{0.5}{}{\bullet}}_{\omega} -E_1
    \kup{\cfCircleSplit{0.5}{}{}}_{\omega}, \\[4pt]
    \label{eq:neck-cutting-kup-omega-2}    \kupo{\NB{\tikz[scale=0.5]{\begin{scope}
  \begin{scope}[yshift= 1.5cm]
    \draw (1,0) arc (0:360:1cm and 0.3cm)coordinate [pos=0](e) coordinate [pos=0.5](f);
  \end{scope}
  \begin{scope}[yshift =-1.5cm]
    \draw (1,0) arc (0:-180:1cm and 0.3cm)coordinate [pos=0](g) coordinate [pos=1](h);
    \draw[densely dotted] (1,0) arc (0:180:1cm and 0.3cm); 
  \end{scope}
  \draw[very thin] (e) -- (g);
  \draw[very thin] (f) -- (h);
  \fill[pattern=vertical lines, pattern color= red, draw = none] (1,0) arc  (0:180:1cm and 0.3cm) -- (-1, 0.2) arc (180:0: 1cm and 0.3cm) --cycle;
  \draw[red] (1,0) arc  (0:180:1cm and 0.3cm);
  \draw [line width =5mm, white, dashed ] (0.9,0.2) -- (-0.9, 0.20);  
  \fill[pattern=vertical lines, pattern color= red, draw = none] (1,0) arc  (0:-180:1cm and 0.3cm) -- (-1, 0.2) arc (-180:0: 1cm and 0.3cm) --cycle;
  \draw[red] (1,0) arc  (0:-180:1cm and 0.3cm);
\end{scope}}}}= 
 \omega\kupo{\cfCircleSplit{0.5}{}{\bullet}} -
     \omega\kupo{\cfCircleSplit{0.5}{\bullet}{}}, 
    \\[4pt]
    \label{eq:dot-migration-kup-omega-1}
    \kup{\cfSeamSquare{0.6}{\bullet}{}}_{\omega} +
    \kup{\cfSeamSquare{0.6}{}{\bullet}}_{\omega} =
    E_1 \kup{\cfSeamSquare{0.6}{}{}}_{\omega}, \\[4pt]
    \label{eq:dot-migration-kup-omega-2}
    \kup{\cfSeamSquare{0.6}{\bullet}{\bullet}}_{\omega} =
    E_2 \kup{\cfSeamSquare{0.6}{}{}}_{\omega},\\[4pt]
    \kup{\cfDecSquare{0.6}{ \bullet\ \bullet}}_{\omega} =
    E_1\kup{\cfDecSquare{0.6}{\bullet}}_{\omega} -
    E_2\kup{\cfDecSquare{0.6}{}}_{\omega}\\[4pt]
    \label{eq:circle-removal-omega-1}
    \kup{\cfDecSquare{0.6}{\circlein{0.7}}}_{\omega} = -
  \kup{\cfDecSquare{0.6}{\circleout{0.7}}}_{\omega} 
  = - \omega\kup{\cfDecSquare{0.6}{}}_{\omega}, \\[4pt]
  \label{eq:seam-saddle}
   \kup{\cfDoubleSeamA{0.6}}_\omega=\omega
   \kup{\cfDoubleSeamB{0.6}}_\omega,\\[4pt]
   \label{eq:kupo-ev-sphere}
   \kupo{\cfSphere{0.5}{}} = 0, \qquad
      \kupo{\cfSphere{0.5}{\bullet}} = 1.
  \end{gather}
  \endgroup
\end{lem}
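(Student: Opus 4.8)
The evaluation $\kupo{\cdot}$ is obtained from $\kup{\cdot}$ by the overall rescaling $\kupo{F}=\omega^{-\theta(F)}\kup{F}$ of \eqref{eq:evo}, where $\theta(F)$ is the number of seam circles, and the same rescaling holds coloring by coloring, $\kupo{F,c}=\omega^{-\theta(F)}\kup{F,c}$, by \eqref{eq:evo-col}. The plan is therefore to deduce every relation except the seam-saddle relation \eqref{eq:seam-saddle} from its $\kup$-counterpart in Lemma~\ref{lem:ev-loc-rel} by multiplying through by the appropriate power of $\omega$, and to prove \eqref{eq:seam-saddle} separately, since it has no $R$-analogue among the relations of Lemma~\ref{lem:ev-loc-rel}.

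In the relations \eqref{eq:neck-cutting-kup-omega}, \eqref{eq:dot-migration-kup-omega-1}, \eqref{eq:dot-migration-kup-omega-2}, the dot-reduction relation, and the sphere evaluations \eqref{eq:kupo-ev-sphere}, every diagram carries the same number of seams --- none in the neck-cutting, dot-reduction, and sphere relations, one in each of the two dot-migration relations. Embedding the local picture in an arbitrary closed seamed surface, applying the corresponding relation of Lemma~\ref{lem:ev-loc-rel}, and using $\kup{\cdot}=\omega^{\theta}\kupo{\cdot}$, the common factor $\omega^{\theta}$ cancels from the two sides, so the $\omega$-relation is exactly the image of the $R$-relation. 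In \eqref{eq:neck-cutting-kup-omega-2} and in the second equality of \eqref{eq:circle-removal-omega-1}, the two diagrams being compared differ by one seam circle (for the contractible seam, removing the circle also merges the enclosed disk-facet with its neighbour, relabelling one facet), so here the substitution $\kup{\cdot}=\omega^{\theta}\kupo{\cdot}$ introduces an extra factor $\omega^{\pm1}$; rewriting it as $\mp\omega$ via $\omega^{2}=-1$ produces precisely the factors of $\omega$ displayed. This step is pure bookkeeping with seam counts and signs.

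The one substantive identity is the seam-saddle relation \eqref{eq:seam-saddle}, which I would prove directly from the closed formula \eqref{eq:evo-col}, in the style of the proof of \eqref{eq:neck-cutting-kup} above (compare \cite[Proposition~2.22]{KR1} and \cite[Proposition~4.7]{KhovKitch}). Embed the two local pictures of \eqref{eq:seam-saddle} into closed seamed surfaces $F$ and $F'$ that agree outside the displayed square. Since the two pictures have the same seam endpoints and co-orientations on the square's boundary, a checkerboard coloring of the complement of the square extends uniquely to each local model, giving a canonical bijection $\adm(F)\to\adm(F')$, $c\mapsto c'$ (with the understanding that if one of $F,F'$ admits no coloring then so does the other, and both sides of \eqref{eq:seam-saddle} vanish). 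For a matched pair $(c,c')$ one has $\chi(F)=\chi(F')$ and the same dot distribution, so the monomial factor $\alpha_1^{d_1(c)}\alpha_2^{d_2(c)}/(\alpha_2-\alpha_1)^{\chi(F)/2}$ in \eqref{eq:evo-col} is unchanged; the recoupling of the two seam arcs replaces, inside the square, a pair of boundary-biting disk-facets of one colour by a single boundary-biting disk-facet of that colour (and the reverse for the other colour), which changes $\theta(F)$, changes $\chi(F_1(c))$ by an odd integer, and changes $\theta_1(c)$ by an integer whose parity one must control. One then checks that the exponent $2\theta_1(c)+\chi(F_1(c))$ of \eqref{eq:evo-col} shifts by an integer $\equiv 1 \pmod 4$, so that $\kupo{F,c}=\omega\,\kupo{F',c'}$ for every coloring; summing over $\adm(F)$ gives \eqref{eq:seam-saddle}. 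An alternative is to cut the two seam arcs apart and reassemble them using \eqref{eq:neck-cutting-kup-omega-2}, \eqref{eq:circle-removal-omega-1}, and the involutory property of $\sigma_{\pm}$, but the coloring argument is the cleanest. The main obstacle is this parity computation: one must verify that the shift of $2\theta_1(c)+\chi(F_1(c))$ under the recoupling is the same residue modulo $4$ for every coloring and for every way of closing up $F$ and $F'$, so that a single, coloring-independent factor $\omega$ is produced rather than an ambiguous $\omega^{\pm1}$ or $\pm1$. This is precisely the point at which keeping $\chi(F_1(c))$ itself in the exponent --- the distinguishing feature of the $R_{\omega}$-evaluation over the $R$-evaluation --- carries the argument.
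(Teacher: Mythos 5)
Your strategy is the one the paper itself intends: the lemma is stated without proof, and the remark that follows it points exactly to the coloring-wise rescaling $\kupo{F,c}=\omega^{-\theta(F)}\kup{F,c}$ of \eqref{eq:evo-col} combined with Lemma~\ref{lem:ev-loc-rel}. For the relations in which every diagram carries the same number of seams your cancellation argument is complete. One warning about the two relations where the seam count drops by one: the rescaling contributes $\omega^{-1}=-\omega$, and pushing the signs through, $\kup{F}=-\kup{G}$ in \eqref{eq:circle-removal-1} forces $\kupo{F}=-\omega^{-1}\kupo{G}=+\omega\,\kupo{G}$, the opposite of the $-\omega$ printed in \eqref{eq:circle-removal-omega-1}; the same happens for \eqref{eq:neck-cutting-kup-2} versus \eqref{eq:neck-cutting-kup-omega-2}. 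Given \eqref{eq:evo-col}, the two circle-removal relations cannot both hold with the printed signs (a direct evaluation of a dotted sphere with one contractible seam against \eqref{eq_ev1} and \eqref{eq:evo-col} confirms this for either co-orientation), so your blanket claim that the bookkeeping ``produces precisely the factors of $\omega$ displayed'' is not quite right --- carry the signs through explicitly and record the discrepancy rather than asserting agreement.

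The genuinely incomplete step is \eqref{eq:seam-saddle}. You have correctly diagnosed why no rescaling can prove it --- reconnecting the two arcs changes $\theta(F)$ by $+1$ or $-1$ according to whether they lie on one seam circle or two, so there is no uniform $\kup{\cdot}$-relation to rescale --- and the coloring-bijection framework modeled on the proof of \eqref{eq:neck-cutting-kup} is the right one. But ``one then checks that $2\theta_1(c)+\chi(F_1(c))$ shifts by $1$ modulo $4$'' is the entire content of the relation and has to be carried out. It is a finite case analysis: for a matched pair of colorings the middle band and the two corner regions of the local models swap colors, so $\chi(F_1(c))$ changes by $\pm 1$ according to which color occupies the band; $\theta(F)$ changes by $\pm 1$ according to the merge/split alternative; and the change in $\theta_1(c)$ is governed by the colors of the one, two, or three seam circles meeting the square. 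Until the cases (merge versus split, crossed with the two possible band colors) are all verified to give the same shift modulo $4$, the factor relating the two sides could a priori depend on the global closure --- which is exactly what the lemma denies. This is also the step where retaining $\chi(F_1(c))$ instead of $\chi(\overline{F}_1(c))$ actually matters, so it deserves the computation rather than an assertion.
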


From relation (\ref{eq:neck-cutting-kup-omega-2}) we see that the  seam line is now  $\sigma_{\pm}$ scaled by $\omega$ rather than  just $\sigma_{\pm}$ as in equation (\ref{eq:neck-cutting-kup-2}). Multiplicative factor $\omega^{-\theta(F)}$ in the equation (\ref{eq:evo}) tells us how other formulas, including equations (\ref{eq:remove-2-circles}) and (\ref{eq:remove-3-circles}), will modify in this evaluation.

\vspace{0.1in}

\subsection{Universal construction} \label{sec:uc}

The \emph{universal construction} constructs
functors from a cobordism category to an algebraic category.
It was introduced by
Blanchet--Habegger--Masbaum--Vogel~\cite{BHMV} and used to build foam state spaces in~\cite{SL3,RW1,KR1}. For this construction, one needs
an evaluation of closed objects, such  as cobordisms, that is, a map from the isomorphism classes of 
closed $n$-manifolds to some commutative ring. In the cases we consider the evaluation is $\kup{\cdot}$ or $\kup{\cdot}_\omega$. 

In favorable situations, functors obtained by the universal construction are
TQFTs. However, this is not always the case. In particular these
functors can fail to be monoidal.

Suppose $\Cob$ is a category of cobordisms, $S$ is a commutative unital ring and
\[\tau\co \mathrm{End}_{\Cob}(\emptyset) \to S
\] 
 a monoid homomorphism. Here $\End_{\Cob}(\emptyset)$ is the monoid of isomorphism classes of cobordisms with the empty boundary as elements and the  
disjoint union as the composition.
The map $\tau$ should take composition of cobordisms to  the product of corresponding elements in $S$. In particular it
maps the empty $n$-cobordism $\emptyset_n = \id_{\emptyset_{n-1}}$ to $1_S$. In these notations we distinguish between the empty $n$-cobordism $\emptyset_n$ and the empty $(n-1)$-cobordism $\emptyset_{n-1}$. The former is the identity endomorphism of the latter. 

Let $M$ be an object of
$\Cob$. Define $\widetilde{\mcF_\tau}(M)$ to be the free $S$-module
generated by $\Hom_\Cob(\emptyset, M)$. For any $W$ in $\Hom(M,
\emptyset)$, define the $S$-linear map $\varphi_W\co
\widetilde{\mcF_\tau}(M) \to S$ on basis elements $V \in
\Hom_\Cob(\emptyset, M)$ by $\varphi_W(V) = \tau(W \circ V)$. Finally, let 
\[
  \mcF_\tau(M) = \widetilde{\mcF}_\tau(M)\left/\bigcap_{W \in
    \Hom_\Cob(\emptyset, M)} \Ker(\varphi_W)\right. 
\]
be the quotient module, $\mcF_\tau(M)\in \Smod$. 
Thus, $\mcF_\tau(M)$ is the quotient of the free $S$-module $\widetilde{\mcF_\tau}(M)$ by the kernel of a suitable bilinear form. 
For a cobordism $W$ representing a generator of $\widetilde{\mcF}_\tau(M)$, denote by $[W]$ its equivalence
class in $\mcF_\tau(M)$. One extends $\mcF$ to a functor by defining
for any $W \in \Hom_\Cob(M_1, M_2)$ and $V \in \widetilde{\mcF}_\tau(M_1)$:
\[
\mcF_\tau([W])([V]) = [W\circ V].
\]

\emph{Bilinear pairing.} 
   For every object $M$ in $\Cob$, evaluation $\tau$ induces an $R$-bilinear pairing $(\cdot, \cdot)_\tau$ on
  $\mcF_\tau(M)$: If $W_1$ and $W_2$ are two elements of
  $\Hom_{\Cob}(\emptyset, M)$, define
  \begin{equation}({[W_1] , [W_2]})_\tau = \tau( \overline{W_1}\circ W_2) = \varphi_{\overline{W_1}}(W_2), \label{eq:bil-pairing}\end{equation}
  where $\overline{W_1} \in \Hom_\Cob(M, \emptyset)$ is the mirror image of $W_1$.
  This pairing is non-degenerate at least on the right. We did not require skew-invariance of $\tau$ under the flip (often related to orientation-reversal of closed cobordisms), where ring $S$ would carry a bar involution,  with 
  $\overline{\tau(W)} = \tau(\overline{W})$ for closed cobordisms $W$. This condition would make the bilinear form skew-invariant as well. In our examples, the involution on the ring is the identity and $\tau(W)=\tau(\overline{W})$ for closed cobordisms $W$, making the bilinear form $(\cdot,\cdot)_{\tau}$ symmetric.

\vspace{0.1in} 

\subsection{Category of seamed surfaces}
\label{sec:categories}

Seamed surfaces can be extended to a category whose objects are finite disjoint
unions of \emph{marked circles}. Marks (or defects) on circles are endpoints of seams ending on the boundary of a surface. 

A \emph{marked circle} is a circle
equipped with a PL-embedding in $\RR^2$. It carries a finite number (possibly none) of
marked points, also called \emph{seam points}. Each marked point carries a co-orientation, that is a preferred direction in a circle at this point. 
Equivalently, we say  that a co-orientation is a preferred side of a circle near a marked point. Marked points and their co-orientations are depicted by red solid
arrows on circles. If one chooses an orientation $o$ of a seam circle,
co-orientation at each marked point either agrees or disagrees with
$o$. If the orientation agrees, respectively disagrees, with co-orientation, we label the point as $+$ point (plus point), respectively, $-$ point (minus point), relative to this orientation. Reversing the orientation of the circle flips plus and minus points. If there are as many plus as minus points, the circle is called \emph{balanced}. This notion does not depend on the choice of orientation. If a circle has
no marked point it is \emph{unmarked}.

An object of the category $\SeSu$ is a finite collection $C$ of disjoint marked circles in the plane. We call such $C$ a \emph{marked  embedded one-manifold} or \emph{meom}, for short.

Let us orient circles in $C$ so that outermost circles are oriented clockwise. When all outermost circles are removed, the outermost circles in the remaining one-manifold must have anticlockwise orientation, and so on.  Iterating this condition, we come to a canonical choice of orientation $o(C)$ for circles in any $C$. 

\begin{figure}[ht]
  \centering
  \NB{\tikz[scale=1]{\input{\imagesfolder/fe_marked-circles-exa}}}
  \caption[{Example of a seambedded one-manifold. Orientation are
  depicted by black and thin arrows and marks are indicated by red triangular arrows.}]
  {Example of a meom. Orientations are
  depicted by thin black arrows $(\feOrientation{0.8})$
  and marks are indicated by triangular red arrows
  $(\feMarkedPoint{0.8})$. The outer left circle is not balanced, the
  others are.}
  \label{fig:exa-marked-circles}
\end{figure}

Since marked points in $C$ carry co-orientations, each point is either compatibly oriented relative to $o(C)$ (a $+$ point) or oppositely oriented (a $-$ point). On a given circle in $C$, we can encode the sequence of orientations as a sequence of signs $\undell=(\ell_1,\dots, \ell_k)$, $\ell_i\in\{+,-\}$, up to cyclic order, as we go along the circle following its orientation. 

If an object $C$ is a single circle in the plane, necessarily clockwise oriented, with the sequence of signs $\undell$, we denote it by $\SS^1_{\undell}$.  

We say that $C$ is \emph{balanced} if it has as many plus signs as minus signs in the collection of  sequences for its circles. For instance, if $C$ has three circles, with cyclic sequences $\{(+++),(--+),(--)\}$, it is balanced. Note that $C$ may be balanced without individual circles having this property. Notice also that the assignment of pluses and minuses to marked points depends not only on their co-orientations but also on the  parity of the circle in its nesting in $C$, that is, whether its orientation in the plane is clockwise or anticlockwise. 

More generally, to $C$ we associate its \emph{weight} $w(C)$, the difference between the number of pluses and minuses on its circles. A meom $C$ is balanced if and only if $w(C)=0$. 

A circle is called \emph{odd}, respectively \emph{even}, if it has an odd, respectively even, number of seam points. A meom $C$ is called \emph{even} if each circle in it is even. 
 
If $C_0$ and $C_1$ are two meoms, a
\emph{seamed cobordism from $C_0$ to $C_1$} is a compact surface $F$
  equipped with a proper PL-embedding in $\RR^2\times [0,1]$ and with finitely many
  disjoint simple curves with co-orientations in $F$ (\emph{seams}) and dots such that:
  \begin{itemize}
  \item The embedding of $F$ is transverse to $\RR^2 \times \{0,1\}$.
  \item The seams of $F$ are properly embedded and transverse to the boundary.
  \item The boundary of $F$ is equal to $C_0 \times \{0\}\sqcup C_1\times
    \{1\}$.
  \item Marked points of $C_0$ and $C_1$ coincide with the
    intersection of seams of $F$ and the boundary. Their
    co-orientations agree with the ones induced by seams.  
  \end{itemize}

  \begin{figure}[ht]
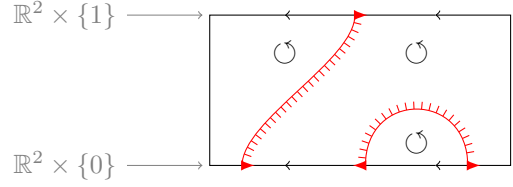

    \centering
    \NB{\tikz[]{\begin{scope}[decoration={border,segment
  length=1mm,amplitude=1mm,angle=90}]
  \draw (-1, 1) -- (-2, 1) -- (-2, -1) -- (-1,-1);
  \draw[-to] (1,1) -- (-1,1);
  \draw[to-to] (1,1) -- (2, 1) -- (2, -1) -- (1, -1);
  \draw[-to] (1,-1) -- (-1,-1);
  \draw[red, -latex, thick] (0,1) -- +(0.1, 0);
  \draw[red, -latex, thick] (-1.5,-1) -- +(0.1, 0);
    \draw[red, -latex, thick] (0,-1) -- +(-0.1, 0);
  \draw[red, -latex, thick] ( 1.5,-1) -- +(0.1, 0);
  \draw[red, postaction={draw, decorate}] (-0.07, 1) .. controls +(0,
  -0.6) and + (0, 0.6) .. (-1.57, -1);
    \draw[red, postaction={draw, decorate}] (0.07, -1) .. controls +(0,
    1) and + (0, 1) .. (1.43, -1);
    \node at (0.75, -0.7) {$\circlearrowleft$};
    \node at (0.75, 0.5) {$\circlearrowleft$};
    \node at (-1, 0.5) {$\circlearrowleft$};
\draw[gray, very thin,<-, font= \footnotesize] (-2.1 ,-1) -- +(-1, 0) node [left, gray]
{$\mathbb{R}^2 \times \{0\}$};  
\draw[gray, very thin,<-, font= \footnotesize] (-2.1 ,1) -- +(-1, 0) node [left, gray]
{$\mathbb{R}^2 \times \{1\}$};  
\end{scope}}}
    \caption{Diagrammatic summary of conventions for orientation and
    co-orientations.  }
    \label{fig:conv-orientations}
  \end{figure}

Our orientation conventions for surfaces and their top and bottom boundaries are the following. Orientation of a surface $F$ induces an orientation of its top boundary $\partial_1 F$ by sticking the first vector of an orientation basis out of the surface, see Figure~\ref{fig:conv-orientations}. The second vector then shows the direction for the boundary orientation. For the bottom boundary $\partial_0 F$ the convention is the opposite: when the second vector of an orientation basis points out, the first vector shows induced orientation of the boundary. We also adopt the convention that 
\[ \partial F = \partial_1 F  \sqcup (-\partial_0 F).
\] 

In Figure~\ref{fig:conv-orientations} top and bottom horizontal lines indicate parts of circles in $\RR^2\times\{1\}$ and $\RR^2\times \{0\}$, respectively. Marked points, shown as red triangles at these boundary lines, inherit co-orientations from those of seamed arcs. Top marked point is a minus point, the three marked points at the bottom edge have signs $(-+-)$, reading from left to right. The seam with both endpoints on the bottom connects a $+$ and a $-$ endpoints (different signs). An edge betweeen the top and the bottom boundary connects two $-$ enpoints (same sign on both).

Meoms and seamed cobordisms between them, up to rel boundary isotopies in $\RR^2\times [0,1]$, form a category denoted
$\SeSu$. Composition is given by superposition and rescaling.

In $\SeSu$ a morphism from $C_0$ to $C_1$ exists iff $w(C_0)=w(C_1)$, that is, if they have the same weight. Indeed, such a morphism may have several seamed arcs connecting marked points on the same boundary $\partial_i F$, $i=0,1$, and connecting marked points of $\partial_0 F$ to the points of $\partial_1 F$. Seamed arcs of the first type connect a plus point and a minus point, each contributing zero to $w(C_i)$. Seamed arcs of the second type connect points of $\partial_0 F$ and $\partial_1 F$ of the same sign, contributing zero to $w(C_0)-w(C_1)$. Consequently, $w(C_0)=w(C_1)$. 
\vspace{0.1in} 

\emph{Remark:} It's important to point out that our setup with orientations of circles and surfaces carries a secondary role, and these orientations can almost be ignored. We are orienting collections $C$ of circles in 
$\RR^2$ so that any surface $F$ embedded in $\RR^2\times [0,1]$ can  be compatibly oriented, together with its boundary. These orientations also allow to assign signs to all marked points on the boundary of $F$ 
In fact, we need less data. What's useful is the relative index of two marked points on $\partial F$ carrying co-orientations  to see if they can be boundary  points of a single seam.   Marked points of $\partial F$, with co-orientations, decompose as a disjoint union of  two sets. For any two points $p_0$ and $p_1$ in the opposite sets it's  possible to replace $F$ by a surface $F'$ in $\RR^2\times[0,1]$ with the same boundary and boundary co-orientations as in  $F$, such that $p_0$ and  $p_1$ are connected by a seam in $F'$. Equivalently, $F$ can be completed by a surface $F''$ with $\partial F=\partial F''$ and $F''$ lying 'outside' of $F$ with $F\cup F''$ a closed surface in $\RR^3$, such that $p_0$ and $p_1$ are connected by a seam in $F''$ and, consequently, belong to the same defect circle (seamed circle) in $F\cup F''$. Points $p_0$ and $p_1$ from the same  subset cannot be connected by a seam in $F$ or in any such replacement $F'$ or complement $F''$. 

\vspace{0.1in}

Applying the universal construction to $\kup{\cdot}$ and 
$\kup{\cdot}_\omega$ and the category $\SeSu$ yields functors,
denoted $\kup{\cdot}$ and $\kup{\cdot}_\omega$, respectively. These are functors 
\[ \kup{\cdot} : \SeSu \lra  R\mathrm{-gmod}, \ \ \  \kup{\cdot}_\omega : \SeSu \lra  R_{\omega}\mathrm{-gmod},
\]
from $\SeSu$ to the category of graded $R$-modules, respectively graded $R_{\omega}$-modules, and homogeneous module homomorphisms. 
The
images of objects of $\SeSu$ by these functors, that is, $\kup{C}$ and $\kup{C}_{\omega}$, are called \emph{state
spaces} of $C$. 

Suppose $F\co C_0\to C_1$ is a seam surface. It follows from the definition of evaluation of seam surface that the $R$-module map $\kup{F}$ (resp.{} $R_\omega$-module map $\kupo{F}$) is homogenous and its degree is given by the formula:
\[
\deg \kup{F} = \deg \kupo{F} = -\chi(F) + 2d(F).
\]

\subsection{State spaces for functors \texorpdfstring{$\kup{\cdot}$}{<.>} and \texorpdfstring{$\kupo{\cdot}$}{<.>omega}}

\label{sec:ssfunc}

The state space $\kup{\emptyset_1}\cong R$ of the empty collection of circles is a rank one free $R$-module generated by $[\emptyset_2]$, where $\emptyset_2$ is the empty surface. If a meom $C$ is unbalanced (that is, $w(C)\not=0$), there are no cobordisms from the empty meom $\emptyset_1$ into it and $\kup{C}=0$, $\kup{C}_{\omega}=0$. 

Given meoms $C$ and $C'$, there is a natural injective graded $R$-module homomorphism 
\begin{equation}
    \kup{C}\otimes_R \kup{C'} \lra \kup{C\sqcup C'} 
\end{equation}
intertwining monoidal structures on the category $\SeSu$ and  the category of graded $R$-modules. This functor is not monoidal, though. The above homomorphism is not an isomorphism, in general, since we can take $C$ and $C'$ unbalanced, with zero state spaces, but make $C\sqcup C'$ balanced, with $\kup{C\sqcup C'}\not=0$. A simple example is choosing an unbalanced circle for $C$, with even number of marked points, and taking $C'=C^!$, the mirror image of $C$. The natural tube cobordism from $\emptyset_1$ to $C\sqcup C^!$, composed with its reflection, evaluates to $\pm 2$. This implies nontriviality of the state space $C\sqcup C^!$. 

\begin{lem}\label{lem:kup-circle-unmarked}
  The state space $\kup{\SS^1}$ of a single unmarked circle is isomorphic to  $A$ as a graded $R$-module. 
\end{lem}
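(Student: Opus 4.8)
The plan is to pin down an explicit $R$-basis of $\kup{\SS^1}$ consisting of the class $[\iota]$ of the cup cobordism (a disk $\iota\co\emptyset_1\to\SS^1$) and the class $[\iota_\bullet]$ of the same disk carrying a single dot, and then to map $[\iota]\mapsto 1$, $[\iota_\bullet]\mapsto X$. I would proceed in three steps: (i) verify that $[\iota]$ and $[\iota_\bullet]$ are $R$-linearly independent in $\kup{\SS^1}$; (ii) verify that they generate $\kup{\SS^1}$ as an $R$-module; (iii) observe that the resulting $R$-module isomorphism $\kup{\SS^1}\cong A$ respects gradings, up to an overall shift --- the cup class sitting in degree $-\chi(\text{disk})=-1$.

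For step (i), let $\overline{\iota},\overline{\iota}_\bullet\co\SS^1\to\emptyset_1$ be the mirror caps. Gluing cups onto caps produces dotted $2$-spheres, so the sphere evaluation and dot reduction relations of Lemma~\ref{lem:ev-loc-rel} give, for the pairing \eqref{eq:bil-pairing},
\[
([\iota],[\iota])=\kup{\cfSphere{0.5}{}}=0,\qquad
([\iota],[\iota_\bullet])=\kup{\cfSphere{0.5}{\bullet}}=1,\qquad
([\iota_\bullet],[\iota_\bullet])=\kup{\cfSphere{0.5}{\bullet\,\bullet}}=E_1.
\]
The Gram matrix $\left(\begin{smallmatrix} 0 & 1 \\ 1 & E_1 \end{smallmatrix}\right)$ has determinant $-1\in R^{\times}$, so any $R$-linear relation between $[\iota]$ and $[\iota_\bullet]$ in $\kup{\SS^1}$ is killed by pairing it successively against $[\iota]$ and $[\iota_\bullet]$; hence these classes are $R$-linearly independent and span a free rank-two direct summand.

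For step (ii) I would run the standard reduction to normal form. All the local relations of Lemma~\ref{lem:ev-loc-rel} descend to $\kup{\SS^1}$: each one becomes, after gluing an arbitrary cap onto it, an identity between closed-surface evaluations, which holds. Let $F\co\emptyset_1\to\SS^1$ be a generating cobordism; since $\SS^1$ carries no marks, every seam of $F$ is a closed curve. First peel off each closed component of $F$ and replace it by its scalar evaluation (well defined and $R$-valued by Lemma~\ref{lem:ev-poly}), so that $F$ may be assumed connected. Remove every null-homotopic seam via \eqref{eq:circle-removal-1}. If $F$ still has positive genus, apply the neck-cutting relation \eqref{eq:neck-cutting-kup} along a parallel push-off of one of the remaining (now essential) seams --- or, when no seam is left, along any non-separating curve in a dotless region. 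In each resulting term the chosen seam has become parallel to a newly capped disk, hence null-homotopic, and is removed by \eqref{eq:circle-removal-1}, while the dot-migration relation \eqref{eq:dot-migration-kup-1} disposes of a dot that may have landed beside it; at the same time the genus strictly decreases, or $F$ splits off a closed component that is again absorbed into $R$. Iterating, $F$ is reduced to a single disk, possibly with seams and dots; a disk has no essential simple closed curve, so all its seams are null-homotopic and vanish, and dot reduction \eqref{eq:dot_reduction} then brings the dot count down to at most one. Thus $[F]\in R[\iota]+R[\iota_\bullet]$. Together with (i) this shows $\{[\iota],[\iota_\bullet]\}$ is a free $R$-basis, and mapping it to $\{1,X\}$ produces the desired isomorphism.

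The main obstacle is making the reduction in (ii) fully rigorous: one must check that, once contractible seams are removed, a connected surface of positive genus always admits a curve to compress along (possibly a push-off of an essential seam) whose surgery strictly lowers a fixed complexity --- genus plus number of components plus number of seams --- without being obstructed by the remaining seams or by dots created en route. This is exactly where the hypothesis that $\SS^1$ is unmarked enters: it forces every seam to be a closed curve, so push-offs of seams are legitimate compressing curves, and it guarantees that the terminal pieces of the reduction are honest disks with trivial first homology. The topological bookkeeping is of the same kind as in \cite[Proposition~2.22]{KR1} and \cite[Proposition~4.7]{KhovKitch}, and once it is carried out the lemma follows.
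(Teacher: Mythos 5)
Your step (i) is exactly the paper's injectivity argument: the same pairing computation producing the Gram matrix $\bigl(\begin{smallmatrix} 0 & 1 \\ 1 & E_1 \end{smallmatrix}\bigr)$ with unit determinant, hence linear independence of the cup and the dotted cup.

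Step (ii) is where you diverge and where a real gap remains. You set up a full normal-form reduction (strip closed components, kill contractible seams, compress along push-offs of essential seams or along non-separating curves to lower the genus) and then concede that the termination argument --- exhibiting, at every stage, a compressing curve whose surgery strictly lowers your complexity without being obstructed by the remaining seams and dots --- is ``the main obstacle'' still to be carried out. As written, surjectivity is therefore not established. The paper's proof avoids all of this with a single move: since $\SS^1$ is unmarked, a collar circle just inside the boundary of any $F\in\Hom_{\SeSu}(\emptyset,\SS^1)$ meets no seams, so one application of the neck-cutting relation (\ref{eq:neck-cutting-kup}) along that collar writes $[F]$ as an $R$-combination of three terms, each a cup or a dotted cup disjoint from a \emph{closed} seamed surface; Lemma~\ref{lem:ev-poly} evaluates each closed piece to an element of $R$, and surjectivity follows immediately. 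No induction, no genus count, no case analysis of where the seams lie --- the entire topological complexity of $F$ is swept into the closed part and absorbed by the closed evaluation. You already invoke Lemma~\ref{lem:ev-poly} to peel off closed components of $F$; the observation you are missing is that a single boundary-parallel neck-cut turns everything except a disk into exactly such a closed component, so the elaborate reduction is unnecessary.
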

\begin{proof}
  We first construct a map from $A$ to
  $\kup{\SS^1}$. Recall that $A$ is a free $R$-module and that
  $1, X$ is a basis of $A$. Define $\Phi:A\lra \kup{\SS^1}$ as the $R$-module map which
  maps $1$ to the class of $\cfCup{0.4}{}$ and $X$ to the class of
  $\cfCup{0.4}{\bullet}$.

 Let us first show that the map $\Phi$ is surjective. By $R$-linearity, it is enough to show that any element of the form
 $[F]$ with $F \in \Hom_{\SeSu}( \emptyset, \SS^1)$ has a
  preimage by $\Phi$. Using the neck-cutting relation (\ref{eq:neck-cutting-kup}) one
  obtains that $[F]$ equals an $R$-linear combination of three
  surfaces. Two of them are disjoint unions of $\cfCup{0.4}{}$ and 
 closed seamed surfaces. One of them is the disjoint union of
 $\cfCup{0.4}{\bullet}$ and a closed seamed surface. Evaluate
 closed seam surfaces, we see that $[F]$ is the an $R$-linear combination of $\cfCup{0.4}{}$ and
 $\cfCup{0.4}{\bullet}$. Hence, $\Phi$ is surjective.

  Let us now show that $\Phi$ is injective. It is enough to show that
  $\left[\cfCup{0.4}{}\right]$ and $\left[\cfCup{0.4}{\bullet}\right]$ are linearly
  independent. We can compute the matrix of the $R$-bilinear pairing
  (\ref{eq:bil-pairing}) on this set:
  \[
    \begin{pmatrix}
     \kup{\cfSphere{0.5}{\color{white} \bullet}} &    \kup{\cfSphere{0.5}{\bullet}} \\[0.5cm]
      \kup{\cfSphereSpecial{0.5}{}{\bullet}} &
      \kup{\cfSphereSpecial{0.5}{\bullet}{\bullet}}
    \end{pmatrix}=
    \begin{pmatrix}
      0 & 1 \\
      1 & E_1
    \end{pmatrix}.  
  \]
This matrix is invertible, proving that $\left[\cfCup{0.4}{}\right]$
and $\left[\cfCup{0.4}{\bullet}\right]$ are linearly independent.
\end{proof}

We remind the reader that in this lemma $A$ is viewed as a module, not as a ring, and its generator $1$ lives in degree $-1$.

One can easily adapt the previous proof to obtain the following
proposition.
\begin{prop}\label{prop:kupp-add-circle}
  Let $C$ be a meom in $\SeSu$ obtained by inserting an unmarked circle $\SS^1$ into one of the regions of a meom $C'$ as an innermost circle. Then $\kup{C} \simeq A
  \otimes_R \kup{C'}$.  
\end{prop}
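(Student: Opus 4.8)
The plan is to adapt the proof of Lemma~\ref{lem:kup-circle-unmarked}, carried out relative to $C'$. Write $C$ for the meom consisting of the circles of $C'$ together with one extra unmarked circle $\SS^1$ sitting innermost inside a region $r$ of $C'$, and let $D$ (respectively $D_\bullet$) denote the disk in $\RR^2\times[0,1]$ that caps off $\SS^1$, undotted (respectively carrying one dot), placed near the top over $r$. Since $\SS^1$ is innermost in $r$, for every $F'\in\Hom_{\SeSu}(\emptyset,C')$ the disjoint union $F'\sqcup D$ (and $F'\sqcup D_\bullet$) is an embedded seamed cobordism $\emptyset\to C$. Recalling that $\{1,X\}$ is an $R$-basis of $A$, I define $\Phi\co A\otimes_R\kup{C'}\to\kup{C}$ by $\Phi(1\otimes[F'])=[F'\sqcup D]$ and $\Phi(X\otimes[F'])=[F'\sqcup D_\bullet]$, extended $R$-linearly. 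This map is graded of degree $0$ once one recalls (as in the remark after Lemma~\ref{lem:kup-circle-unmarked}) that the generator $1\in A$ sits in degree $-1$: by the degree formula $\deg\kup{F}=-\chi(F)+2d(F)$ one has $\deg[F'\sqcup D]=\deg[F']-1$ and $\deg[F'\sqcup D_\bullet]=\deg[F']+1$.

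First I would check that $\Phi$ is well defined, i.e.\ that $[F']\mapsto[F'\sqcup D]$ and $[F']\mapsto[F'\sqcup D_\bullet]$ descend from the free module $\widetilde{\mcF}(C')$ of the universal construction to the quotient $\kup{C'}$. The point is that for any $W\in\Hom_{\SeSu}(C,\emptyset)$, capping its seam-free, unmarked boundary circle $\SS^1$ by an undotted (respectively dotted) disk produces a cobordism $W_0$ (respectively $W_1$) in $\Hom_{\SeSu}(C',\emptyset)$, and since gluing the disk $D$ to the collar of $\SS^1$ in $W$ again yields a disk, one has the equalities of closed seamed surfaces $W\circ(F'\sqcup D)=W_0\circ F'$ and $W\circ(F'\sqcup D_\bullet)=W_1\circ F'$. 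Hence $\varphi_W(F'\sqcup D)=\varphi_{W_0}(F')$ and $\varphi_W(F'\sqcup D_\bullet)=\varphi_{W_1}(F')$, so if $\sum_i r_i[F'_i]=0$ in $\kup{C'}$ then $\sum_i r_i[F'_i\sqcup D]=0$ and $\sum_i r_i[F'_i\sqcup D_\bullet]=0$ in $\kup{C}$. Equivalently, $\Phi$ is the composite of $\mathrm{id}\otimes(\text{the isomorphism }A\cong\kup{\SS^1}\text{ of Lemma~\ref{lem:kup-circle-unmarked}})$ with the natural injective homomorphism $\kup{\SS^1}\otimes_R\kup{C'}\to\kup{C}$ discussed before the proposition, after moving the extra circle through $\RR^2\times[0,1]$ from a side position into $r$; this second viewpoint also yields injectivity for free.

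For surjectivity I would run the neck-cutting argument exactly as in Lemma~\ref{lem:kup-circle-unmarked}, but near the new circle: given $G\in\Hom_{\SeSu}(\emptyset,C)$, apply the neck-cutting relation~(\ref{eq:neck-cutting-kup}) to a collar of the boundary circle $\SS^1$. This writes $[G]=[G'\sqcup D_\bullet]+[G''\sqcup D]-E_1\,[G'\sqcup D]$, where $G',G''\in\Hom_{\SeSu}(\emptyset,C')$ are obtained from $G$ by capping $\SS^1$ with a disk, the surface $G''$ carrying the extra dot produced by neck-cutting (which may be slid into the bulk). Every term lies in the image of $\Phi$, so $[G]=\Phi\bigl(X\otimes[G']+1\otimes[G'']-E_1(1\otimes[G'])\bigr)$; since such classes span $\kup{C}$, $\Phi$ is onto. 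For injectivity one may either invoke injectivity of the natural map $\kup{\SS^1}\otimes_R\kup{C'}\to\kup{C}$ together with injectivity of $A\to\kup{\SS^1}$ from Lemma~\ref{lem:kup-circle-unmarked}, or argue directly: given $\xi=1\otimes a+X\otimes b\in\ker\Phi$, evaluate $\varphi_W$ on $\Phi(\xi)$ for test cobordisms $W=(\text{disk cup over }\SS^1)\sqcup W'$ and $W=(\text{dotted disk cup over }\SS^1)\sqcup W'$ with $W'\in\Hom_{\SeSu}(C',\emptyset)$ arbitrary; using $\kup{\SS^2}=0$, $\kup{\SS^2_\bullet}=1$ from~(\ref{eq:kup-ev-sphere}) and $\kup{\SS^2_{\bullet\bullet}}=E_1$ from~(\ref{eq:dot_reduction}), these force $\varphi_{W'}(b)=0$ and then $\varphi_{W'}(a)=0$ for all such $W'$, hence $a=b=0$. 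Combining, $\Phi$ is a graded $R$-module isomorphism $A\otimes_R\kup{C'}\cong\kup{C}$.

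The step I expect to be the main obstacle is the well-definedness bookkeeping in the universal construction: one must argue carefully that disjointly inserting the capping disk over the innermost new circle is compatible with passing to the quotient defining $\kup{C}$ — and, if one takes the second viewpoint, that sliding the extra unmarked circle in and out through the bounding cylinder $\RR^2\times[0,1]$ induces an isomorphism of state spaces. Everything else is a direct transcription of the surjectivity computation in the proof of Lemma~\ref{lem:kup-circle-unmarked}.
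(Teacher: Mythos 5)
Your proposal is correct and follows exactly the route the paper intends: the paper gives no written proof, saying only that one can easily adapt the proof of Lemma~\ref{lem:kup-circle-unmarked}, and your argument is precisely that adaptation (neck-cutting a collar of the new innermost circle for surjectivity, and pairing against capped test cobordisms using $\kup{\SS^2}=0$, $\kup{\SS^2_\bullet}=1$, $\kup{\SS^2_{\bullet\bullet}}=E_1$ for injectivity), with the well-definedness of $\Phi$ handled correctly via the universal construction. Nothing is missing.
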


\begin{cor}
The state space $\kup{\sqcup_{i=1}^k\SS^1}$ of a collection of $k$ unmarked circles is isomorphic to  $A^{\otimes k}$ as a graded $R$-module.   
\end{cor}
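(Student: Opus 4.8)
The plan is to argue by induction on $k$, using Proposition~\ref{prop:kupp-add-circle} for the inductive step and Lemma~\ref{lem:kup-circle-unmarked} together with the identification $\kup{\emptyset_1}\cong R$ for the base case. The only geometric input required is the elementary planarity fact that any finite collection of disjoint embedded circles in $\RR^2$ has an \emph{innermost} circle, i.e.\ one bounding a disk in the plane whose interior is disjoint from all the remaining circles; this is immediate, since the nesting relation on disjoint planar circles is a forest and hence has minimal elements.

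First I would set up the induction. For $k=0$ the state space is $\kup{\emptyset_1}\cong R=A^{\otimes 0}$, and for $k=1$ it is $\kup{\SS^1}\cong A$ by Lemma~\ref{lem:kup-circle-unmarked}. For the inductive step, given a collection $C=\sqcup_{i=1}^{k}\SS^1$ of $k\geq 1$ unmarked circles, choose an innermost circle $\gamma\subset C$ and set $C'=C\setminus\gamma$, a collection of $k-1$ unmarked circles. Since $\gamma$ is innermost, the disk it bounds meets no other circle of $C$, so $\gamma$ is positioned inside one of the regions of $C'$ as an innermost circle in the sense of Proposition~\ref{prop:kupp-add-circle}. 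That proposition then yields a graded $R$-module isomorphism $\kup{C}\simeq A\otimes_R\kup{C'}$, and the inductive hypothesis gives $\kup{C'}\simeq A^{\otimes(k-1)}$, whence $\kup{C}\simeq A\otimes_R A^{\otimes(k-1)}=A^{\otimes k}$ as graded $R$-modules.

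I do not expect a serious obstacle here: Proposition~\ref{prop:kupp-add-circle} already absorbs the substantive work (the neck-cutting argument and the nondegeneracy of the bilinear pairing, as in the proof of Lemma~\ref{lem:kup-circle-unmarked}), and what remains is the bookkeeping of the induction plus the observation about innermost circles. The one point deserving a word of care is that the resulting isomorphism depends on the chosen order in which the circles are peeled off; since permuting tensor factors of $A$ produces canonically isomorphic graded $R$-modules, the isomorphism class $A^{\otimes k}$ appearing in the statement is unambiguous, and one could record the compatible identifications if a functorial version were wanted. I would also note that the argument respects the internal grading throughout, each tensor factor $A$ being generated in degree $-1$ as in the remark following Lemma~\ref{lem:kup-circle-unmarked}, so the asserted isomorphism is genuinely one of graded modules.
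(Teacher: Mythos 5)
Your proof is correct and is exactly the argument the paper intends: the corollary follows by induction on $k$, peeling off an innermost circle via Proposition~\ref{prop:kupp-add-circle}, with Lemma~\ref{lem:kup-circle-unmarked} (or the identification of the state space of the empty collection with $R$) as the base case. Your observation about innermost circles also explains the paper's follow-up remark that the same isomorphism holds for arbitrarily nested collections of unmarked circles.
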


The same isomorphism holds for arbitrarily nested collections of unmarked circles.

Each circle corresponds to a tensor factor. Using relations in  Lemma~\ref{lem:ev-loc-rel}, maps associated to generating cobordisms between unmarked circles can be identified with the following maps:
\begin{gather*}
  \kup{\NB{\tikz[scale=0.5]{\begin{scope}
  \begin{scope}[yshift= 1.5cm]
    \draw (1,0) arc (0:360:1cm and 0.3cm)coordinate [pos=0](e) coordinate [pos=0.5](f);
  \end{scope}
  \begin{scope}[yshift =-1.5cm]
    \draw (1,0) arc (0:-180:1cm and 0.3cm)coordinate [pos=0](g) coordinate [pos=1](h);
    \draw[densely dotted] (1,0) arc (0:180:1cm and 0.3cm); 
  \end{scope}
  \draw[very thin] (e) -- (g);
  \draw[very thin] (f) -- (h);
  \fill[pattern=vertical lines, pattern color= red, draw = none] (1,0) arc  (0:180:1cm and 0.3cm) -- (-1, 0.2) arc (180:0: 1cm and 0.3cm) --cycle;
  \draw[red] (1,0) arc  (0:180:1cm and 0.3cm);
  \draw [line width =5mm, white, dashed ] (0.9,0.2) -- (-0.9, 0.20);  
  \fill[pattern=vertical lines, pattern color= red, draw = none] (1,0) arc  (0:-180:1cm and 0.3cm) -- (-1, 0.2) arc (-180:0: 1cm and 0.3cm) --cycle;
  \draw[red] (1,0) arc  (0:-180:1cm and 0.3cm);
\end{scope}}}}= \sigma_+ \co A\to A, \qquad
  \kup{\NB{\tikz[scale=0.5]{\begin{scope}
  \begin{scope}[yshift= 1.5cm]
    \draw (1,0) arc (0:360:1cm and 0.3cm)coordinate [pos=0](e) coordinate [pos=0.5](f);
  \end{scope}
  \begin{scope}[yshift =-1.5cm]
    \draw (1,0) arc (0:-180:1cm and 0.3cm)coordinate [pos=0](g) coordinate [pos=1](h);
    \draw[densely dotted] (1,0) arc (0:180:1cm and 0.3cm); 
  \end{scope}
  \draw[very thin] (e) -- (g);
  \draw[very thin] (f) -- (h);
  \fill[pattern=vertical lines, pattern color= red, draw = none] (1,0) arc  (0:180:1cm and 0.3cm) -- (-1, -0.2) arc (180:0: 1cm and 0.3cm) --cycle;
  \draw[red] (1,0) arc  (0:180:1cm and 0.3cm);
  \draw [line width =5mm, white, dashed ] (0.9,0.2) -- (-0.9, 0.20);  
  \fill[pattern=vertical lines, pattern color= red, draw = none] (1,0) arc  (0:-180:1cm and 0.3cm) -- (-1, -0.2) arc (-180:0: 1cm and 0.3cm) --cycle;
  \draw[red] (1,0) arc  (0:-180:1cm and 0.3cm);
\end{scope}}}}= \sigma_- \co A\to A, \\[4pt]
  \kup{\cfCap{0.5}{}} = \epsilon\co A\to R, \qquad
  \kup{\cfCup{0.5}{}} = \begin{array}{rcl} R &  \to & A \\ 1_R &\mapsto &1_A
  \end{array}, \qquad
  \kup{\cfCup{0.5}{\bullet}} = \begin{array}{rcl} R &  \to & A \\ 1_R &\mapsto &X
  \end{array}, \\[4pt]
  \kup{\cfDelta{0.5}{}} = \Delta\co A\to A\otimes A,   \qquad
  \kup{\cfMu{0.5}{}} = m \co A\to A\otimes A.   \\
\end{gather*}
The maps are those of the (1+1)-dimensional TQFT $(R,A)$ and the $\sigma$-defect circles, so the theory \hyperlink{frobext2}{1} of Sections~\ref{sec:algebrasl2} and \ref{sec_fine_str} appears out of evaluation $\kup{\cdot}$. 
\vspace{0.1in}

Let us denote by $\SS^1_{+-}$ the balanced seamed circle in the plane with two seam
points. It is unique up to planar isotopy. 

\begin{lem}
  Suppose that $\gamma$ is a separating closed 
  curve on  a seamed surface $F$ that intersects seams transversely in exactly two points
  with opposite co-orientations. Then the following relation holds:
  \begin{gather}
  \label{eq:neckcut-2pt}  
 \kup{ \NB{\tikz[scale=0.7]{\begin{scope}[scale=0.8, decoration={border,segment length=1mm,amplitude=1mm,angle=-90}]
  \begin{scope}[yshift= 1.5cm]
    \draw (1,0) arc (0:360:1cm and 0.3cm)coordinate [pos=0](e)
    coordinate [pos=0.5](f) coordinate[pos=0.65] (bt)
    coordinate[pos=0.85] (at);
  \end{scope}
    \draw[densely dashed, green!50!black] (1,0) arc (0:-180:1cm and
    0.3cm) coordinate [pos=1] (gamma);
    \draw[<-, very thin, green!50!black] (gamma) -- +(-0.5,0) node[left, green!50!black] {$\gamma$};
    \draw[dotted, green!50!black] (1,0) arc (0:180:1cm and 0.3cm);   
  \begin{scope}[yshift =-1.5cm]
    \draw (1,0) arc (0:-180:1cm and 0.3cm)coordinate [pos=0](g)
    coordinate [pos=1](h) coordinate[pos=0.3] (ab)
    coordinate[pos=0.7] (bb);
    \draw[densely dotted] (1,0) arc (0:180:1cm and 0.3cm); 
  \end{scope}
  \draw (e) -- (g);
  \draw (f) -- (h);
  \draw[red, postaction={draw, decorate}] (at) -- (ab);
  \draw[red, postaction={draw, decorate}] (bb) -- (bt);
\end{scope}
}}} \ \stackrel{\warning{0.4}}{=}
  \ \kup{\NB{\tikz[scale=0.7]{\begin{scope}[scale =0.8]
\begin{scope}  [decoration={border,segment length=1mm,amplitude=1mm,angle=-90}]
  \begin{scope}[yshift= 1.5cm]
    \draw (1,0) arc (0:360:1cm and 0.3cm)coordinate [pos=0](e)
    coordinate [pos=0.5](f) coordinate[pos=0.65] (bt)
    coordinate[pos=0.85] (at);
    \draw[very thin] (1,0) arc (0:-180:1cm and 1.2cm);
  \end{scope}
  \begin{scope}[yshift =-1.5cm]
    \draw (1,0) arc (0:-180:1cm and 0.3cm)coordinate [pos=0](g)
    coordinate [pos=1](h) coordinate[pos=0.3] (ab)
    coordinate[pos=0.7] (bb);
    \draw[very thin] (1,0) arc (0:180:1cm and 1.2cm);
    \draw[densely dotted] (1,0) arc (0:180:1cm and 0.3cm); 
\node at (0,0.9) {$\bullet$};
  \end{scope}
  \draw[red, postaction={draw, decorate}] (at) .. controls +(0, -1)  and +(0, -1) .. (bt);
  \draw[red, postaction={draw, decorate}] (bb) .. controls +(0, 1)  and +(0, 1) .. (ab);
\end{scope}

\end{scope}}}}\ - \ 
  \kup{\NB{\tikz[scale=0.7]{\begin{scope}[scale =0.8]
\begin{scope}  [decoration={border,segment length=1mm,amplitude=1mm,angle=-90}]
  \begin{scope}[yshift= 1.5cm]
    \draw (1,0) arc (0:360:1cm and 0.3cm)coordinate [pos=0](e)
    coordinate [pos=0.5](f) coordinate[pos=0.65] (bt)
    coordinate[pos=0.85] (at);
    \draw[very thin] (1,0) arc (0:-180:1cm and 1.2cm);
    \node at (0,-0.6) {$\bullet$};
  \end{scope}
  \begin{scope}[yshift =-1.5cm]
    \draw (1,0) arc (0:-180:1cm and 0.3cm)coordinate [pos=0](g)
    coordinate [pos=1](h) coordinate[pos=0.3] (ab)
    coordinate[pos=0.7] (bb);
    \draw[very thin] (1,0) arc (0:180:1cm and 1.2cm);
    \draw[densely dotted] (1,0) arc (0:180:1cm and 0.3cm); 
  \end{scope}
  \draw[red, postaction={draw, decorate}] (at) .. controls +(0, -1)  and +(0, -1) .. (bt);
  \draw[red, postaction={draw, decorate}] (bb) .. controls +(0, 1)  and +(0, 1) .. (ab);
\end{scope}

\end{scope}}}}.
\end{gather}
Notation \warning{0.6} above the equal sign emphasizes the fact this \emph{local} relation is valid
provided that a \emph{global} condition ($\gamma$ being a separating curve) is satisfied.
\end{lem}
\begin{proof}
  Let us denote by $F$ the seam surface on the left-hand side of the
  identity and by $G_1$ and $G_2$ the seamed surfaces on the right-hand
  side. As surfaces $G_1$ and $G_2$ are identical, they only differ by
  their dot distributions. Denote by $G$ the seam surface which is
  identical to $G_1$ and $G_2$ but with the visible dot removed. There is a canonical
  one-to-one correspondence between $\adm(G)$, $\adm(G_1)$ and $\adm(G_2)$. Let
  $c$ be an element of $\adm(G)$. There are $4$ possible
  local types of $c$. They are given in Figure~\ref{fig:col-2pt-circle}.
  \begin{figure}[ht]
    \centering
    \NB{\tikz[scale=0.6]{\input{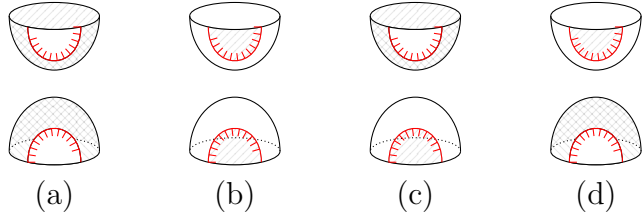}}}
    \caption{The four possible local types of a coloring $c$ of
    $G$. Hash means facet has color $1$ while solid white means
    facet has color $2$.}
    \label{fig:col-2pt-circle}
  \end{figure}

  In cases (c) (resp.{} (d)), one has
  \begin{align*}
  \kup{G_1, c}= \kup{G_2, c} &=
  \alpha_2\kup{G,c} \\ (\text{resp.{}} \quad \kup{G_1, c}= \kup{G_2, c} &=
  \alpha_1\kup{G,c}),
  \end{align*}
  therefore, in both of these cases, $\kup{G_2, c} - \kup{G_1, c}
  =0$.

  The admissible colorings of $G$ of types (a) and (b) are in a one-to-one
  correspondence with admissible colorings of $F$. Let $c$ be a
  coloring of type (a) of $G$, with the corresponding coloring of $F$ 
  still denoted by $c$. On the one hand, one has:
  \begin{gather*}
    \theta_1(G,c) = \theta_1(F,c), \quad \chi(G) = \chi(F)+2, \quad \chi(G_1(c)) =
    \chi(F_1(c)) +1,
     \quad
    \theta(G) = \theta(F) +1.
  \end{gather*}
  The last identity comes from $\gamma$ being a separating
  curve of $F$ and implies 
  \[ \kup{F,c} = -\frac{\kup{G,c} }{\alpha_2 - \alpha_1}.
  \] 
  On the other hand,  $\kup{G_1, c} = \alpha_1 \kup{G,c}$ and
  $\kup{G_2,c} = \alpha_2 \kup{G,c}$ so that, finally,
  $\kup{F,c} = \kup{G_1,c} - \kup{G_2,c}$.

  If the coloring $c$ is of type (b), a similar computation gives as well 
  $\kup{F,c} = \kup{G_1,c} - \kup{G_2,c}$.
  Summing over all admissible colorings, one obtains $\kup{F} = \kup{G_1} - \kup{G_2}$.
\end{proof}

\begin{cor}
  The state space $\kup{\SS^1_{+-}}$ 
  is isomorphic to $A$   as a graded $R$-module.
\end{cor}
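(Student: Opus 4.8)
The plan is to mimic the proof of Lemma~\ref{lem:kup-circle-unmarked} almost verbatim, with the ordinary neck-cutting relation~(\ref{eq:neck-cutting-kup}) replaced by the two-point neck-cutting relation~(\ref{eq:neckcut-2pt}). First I would fix a standard cup cobordism: let $D\co\emptyset\to\SS^1_{+-}$ be the disk carrying a single seam arc joining the two marked points of $\SS^1_{+-}$ (with co-orientations matching the two boundary marks; such an arc exists precisely because the two marks of $\SS^1_{+-}$ carry opposite signs). The seam arc cuts $D$ into two facets; fix one of them and let $D_\bullet$ be $D$ with one dot placed on that facet. I would then define $\Phi\co A\to\kup{\SS^1_{+-}}$ by $\Phi(1)=[D]$, $\Phi(X)=[D_\bullet]$, extended $R$-linearly, and show $\Phi$ is an isomorphism of graded $R$-modules.

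For surjectivity, observe that any seamed cobordism $F\co\emptyset\to\SS^1_{+-}$ has exactly one seam arc, necessarily joining the two marks of $\SS^1_{+-}$ (an arc needs two boundary endpoints and only two are available), plus possibly some closed seams. Take $\gamma$ to be a pushoff of $\SS^1_{+-}$ into the interior of $F$, close enough to the boundary that it meets the seam arc transversely in exactly two points, carrying the opposite co-orientations inherited from the two marks. Since $\gamma$ is separating, relation~(\ref{eq:neckcut-2pt}) applies and expresses $[F]$ as a difference of two classes, each obtained by cutting $F$ along $\gamma$ and capping both sides. The collar side caps off to a disk with one seam arc from one mark to the other, i.e.\ to $D$ (possibly with a dot), while the complementary side caps off to a closed seamed surface whose evaluation lies in $R$ by Lemma~\ref{lem:ev-poly}; multiplicativity~(\ref{eq:product}) then shows each of the two classes is an element of $R$ times the class of $D$ with at most one dot. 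A dot on the other facet of $D$ is rewritten via the dot-migration relation~(\ref{eq:dot-migration-kup-1}) as $E_1[D]-[D_\bullet]$. Hence $[F]\in R[D]+R[D_\bullet]$ and $\Phi$ is onto.

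For injectivity I would show $[D]$ and $[D_\bullet]$ are $R$-linearly independent by computing the Gram matrix of the pairing~(\ref{eq:bil-pairing}). The mirror $\overline{D}$ is the cap with a seam arc, so $\overline{D}\circ D$ is a $2$-sphere with a single contractible seam circle: by~(\ref{eq:circle-removal-1}) and~(\ref{eq:kup-ev-sphere}) its evaluation is $0$; inserting one dot gives $\pm1$, and $\overline{D_\bullet}\circ D_\bullet$ is a sphere with a seam circle and two dots on one facet, which reduces via~(\ref{eq:dot_reduction}) to $\pm E_1$. The matrix $\begin{pmatrix} 0 & \pm 1 \\ \pm 1 & \pm E_1 \end{pmatrix}$ has unit determinant, so $[D],[D_\bullet]$ are independent and $\Phi$ is injective. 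Finally $\Phi$ is degree-preserving: $D$ has $\chi=1$ and no dots, so $[D]$ lives in degree $-1$, the degree of $1\in A$ in the convention recalled after Lemma~\ref{lem:kup-circle-unmarked}, and $[D_\bullet]$ lives in degree $1=\deg X$. I expect the only genuinely delicate point to be the bookkeeping in the surjectivity step: verifying that the pushoff $\gamma$ really is separating and meets the seam in exactly two oppositely co-oriented points, and correctly tracking where the auxiliary dot in~(\ref{eq:neckcut-2pt}) lands when the capped-off collar is identified with $D$; everything else parallels Lemma~\ref{lem:kup-circle-unmarked} and Proposition~\ref{prop:kupp-add-circle}.
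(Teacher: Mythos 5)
Your proposal is correct and follows essentially the same route as the paper, which proves this corollary by repeating the argument of Lemma~\ref{lem:kup-circle-unmarked} with the two-point neck-cutting relation (\ref{eq:neckcut-2pt}) in place of (\ref{eq:neck-cutting-kup}): surjectivity onto the span of the seamed cup and dotted seamed cup via neck-cutting along a boundary pushoff, and injectivity via the unimodular $2\times 2$ Gram matrix. Your added bookkeeping (the unique seam arc forced by the two boundary marks, the oppositely co-oriented intersections with the pushoff guaranteed by balancedness, and the dot-migration step) fills in exactly the details the paper leaves implicit.
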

\begin{proof}[Sketch of the proof]
  Same argument as for Lemma~\ref{lem:kup-circle-unmarked} shows that
  $\kup{\SS^1_{+-}}$ is generated by $\feSeamedCup{0.4}{}$ and $\feSeamedCup{0.4}{{}_\bullet}$. One shows
  similarly that these elements are $R$-linearly independent. 
\end{proof}

The behavior of the functor $\kup{ \cdot}$ on disjoint unions of 
circles with marked points is not fully monoidal, even if the circles are balanced.  

First, consider the seamed surface $\SS^1_{+-}\times \SS^1$ given by taking meom $\SS^1_{+-}$, a balanced circle with two marks in the plane, and multiplying by the circle $\SS^1$ to get a standardly embedded torus in $\RR^3$ with two seamed circles. The evaluation of this surface  
\[
\kup{\SS^1_{+-}\times \SS^1} = -2 \neq 2 =
\rk_R \kup{\SS^1_{+-}}
\] 
is minus two, different from two, which is the rank of $\kup{\SS^1_{+-}}$ as a free $R$-module. Usually, in the TQFT land, multiplying an $(n-1)$-manifold $M$ by a circle and evaluating gives the dimension of the state space associated to the manifold $M$. In the example above, the evaluation is $-2$ rather than $2$, perhaps implying the need to make the latter a super-module sitting in odd degree and hinting  that we cannot expect monoidality property on the nose without further modifications. 

Similarly, the evaluation $\kup{\SS^1_{++}\times \SS^1} = 2$, but the circle $\SS^1_{++}$ is unbalanced, with the trivial state space of zero dimension. Of course, a cross-section $\SS^1_{++}\sqcup \SS^1_{--}$ of $\SS^1_{++}\times \SS^1$ is balanced, while individual circles in it are not, 
with the state space $\kup{\SS^1_{++}\sqcup \SS^1_{--}}$ nontrivial, showing a failure of monoidality for trivial reasons. 

Next, consider the following six elements of $\kup{\SS_{+-} \sqcup
\SS_{+-}}$:
\begin{gather*}
  \left[ \feSeamedCup{0.6}{}\ \feSeamedCup{0.6}{}\right],
  \qquad 
  \left[ \feSeamedCup{0.6}{{}_\bullet}\ \feSeamedCup{0.6}{}\right],
  \qquad
  \left[ \feSeamedCup{0.6}{}\ \feSeamedCup{0.6}{{}_\bullet} \right], \\
  \qquad 
  \left[\feSeamedTwistedTube{0.6}{0.31}{}\right],
   \qquad
  \left[ \feSeamedCup{0.6}{{}_\bullet}\ \feSeamedCup{0.6}{{}_\bullet} \right]
  \qquad \text{and} \qquad
  \left[\feSeamedTwistedTube{0.6}{0.31}{{}_\bullet}\right].
\end{gather*} The surfaces are listed in the order of increasing degree, which is $-2,0,0,0,2,2$ in this order. 
  The matrix of the $R$-bilinear pairing (Gram matrix) on these elements is:
  \begin{equation}\label{eq:Gram-mat-S2S2}
    \begin{pmatrix}
      0&0&0&0&1   &  1  \\
      0&0&1&1& E_1 & E_1  \\
      0&1&0&1& E_1 & E_1 \\
      0&1&1&{-2}&E_1 &  -E_1\\
      1&E_1&E_1&E_1& E_1^2& E_1^2 - E_2 \\
      1&E_1&E_1&-E_1&E_1^2-E_2& -E_1^2 + 2E_2
    \end{pmatrix}   \end{equation}
Its determinant is $4E_1^2-16E_2=4(E_1^2-4E_2)=4\mcD$, which is not a zero divisor in $R$. Consequently, 
these six elements are
$R$-linearly independent in $\kup{\SS^1_{+-} \sqcup
\SS^1_{+-}}$ and span a graded submodule isomorphic to $R^6$, of graded rank $(q+q^{-1})^2+1+q^2$. 

The natural map of state spaces 
\begin{equation*}
    \kup{\SS^1_{+-} } \otimes_R \kup{\SS^1_{+-}} \lra \kup{\SS^1_{+-}  \sqcup\SS^1_{+-}} 
\end{equation*}
given by putting two seamed surfaces with boundary $\SS^1_{+-}$ next to each other is injective. It takes products of standard basis vectors for $\SS^1_{+-}$ to the first, second, third and fifth surfaces, among the six elements above. In particular, this map is not surjective, missing the free $R$-module generated by the two other surfaces, given by a seamed tube, either with a dot or undecorated. 

In particular, the state spaces  $\kup{\SS^1_{+-} \sqcup
\SS^1_{+-}}$ and $\kup{\SS^1_{+-} } \otimes \kup{\SS^1_{+-}}$ are not isomorphic. With more work, one can check that the above six surfaces are a basis of
$\kup{\SS^1_{+-} \sqcup \SS^1_{+-}}$. 

We see that the functor $\kup{\cdot}$, even restricted to meoms with every circle balanced, is not a monoidal functor from the category  of seamed surfaces with boundary embedded in $\RR^3$ to the category of graded $R$-modules, strengthening non-monoidality property observed at the beginning of this section. 

The matrix (\ref{eq:Gram-mat-S2S2}) becomes unimodular if we invert $2$
and  $\mcD$, but we don't explore this here. Note also that the value
of the closed genus three surface is $2\mcD$, see (\ref{eq:gen3surface}). 

Finally, let us inspect the state space $\kup{\SS^1_4}$, where
$\SS^1_4=\SS^1_{+-+-}$ is the circle with four seam points with alternating
co-orientations, see Figure~\ref{fig:S4}.

\begin{figure}[ht]
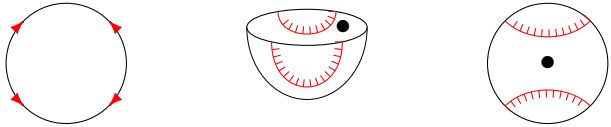

  \centering
  \NB{\tikz[scale=0.8]{\begin{scope}
  \draw (1,0) arc (0:360:1cm);
  \draw[red, -latex, thick] (43:1cm) arc (43:45:1cm);
  \draw[red, -latex, thick] (137:1cm) arc (137:135:1cm);
  \draw[red, -latex, thick] (223:1cm) arc (223:225:1cm);
  \draw[red, -latex, thick] (317:1cm) arc (317:315:1cm);
\end{scope}

\begin{scope}[decoration={border,segment
  length=1mm,amplitude=1mm,angle=-90}, xshift= 4cm, yshift =0.6cm]
  \draw (1,0) arc (0:360:1cm and 0.3cm)coordinate [pos=0.17](al)
    coordinate[pos=0.33] (bl)
    coordinate [pos=0.5](f) coordinate[pos=0.65] (bt)
    coordinate[pos=0.85] (at);
    \draw[very thin] (1,0) arc (0:-180:1cm and 1.2cm);
    \node at (0.6, 0) {$\bullet$};
    \draw[red, postaction={draw, decorate}] (at) .. controls +(0, -1)  and +(0, -1) .. (bt);
    \draw[red, postaction={draw, decorate}] (al) .. controls +(0, -0.5)  and +(0, -0.5) .. (bl);
  \end{scope}

 \begin{scope}[xshift =8cm]
\begin{scope}[decoration={border,segment
  length=1mm,amplitude=1mm,angle=-90}]
  \draw (1,0) arc (0:360:1cm);
  \draw[red, postaction={draw, decorate}] (45:1cm) .. controls (45:0.5cm)  and (135:0.5cm) .. (135:1cm);
  \draw[red, postaction={draw, decorate}] (225:1cm) .. controls (225: 0.5cm)  and (315:0.5cm) .. (315:1cm);
  \node at (0,0) {$\bullet$};
\end{scope}    
 \end{scope}}}
  \caption{$\SS_4^1$ is on the left; in the middle is a seamed surface from the  empty web 
  $\emptyset$ to $\SS_4^1$, and on the right, its flat representation.}
  \label{fig:S4}
\end{figure}

One can show using the neck-cutting relation that the following four elements generate
$\kup{\SS_4^1}$:
\[
\NB{\tikz[scale=0.6]{\begin{scope}[decoration={border,segment
  length=1mm,amplitude=1mm,angle=-90}]
 \begin{scope}[xshift =0cm]
  \draw (1,0) arc (0:360:1cm);
  \draw[red, postaction={draw, decorate}] (45:1cm) .. controls (45:0.5cm)  and (315:0.5cm) .. (315:1cm);
  \draw[red, postaction={draw, decorate}] (225:1cm) .. controls (225: 0.5cm)  and (135:0.5cm) .. (135:1cm);
\end{scope}    
 \begin{scope}[xshift =4cm]
  \draw (1,0) arc (0:360:1cm);
  \draw[red, postaction={draw, decorate}] (45:1cm) .. controls (45:0.5cm)  and (135:0.5cm) .. (135:1cm);
  \draw[red, postaction={draw, decorate}] (225:1cm) .. controls (225: 0.5cm)  and (315:0.5cm) .. (315:1cm);
\end{scope}    
 \begin{scope}[xshift =8cm]
  \draw (1,0) arc (0:360:1cm);
  \draw[red, postaction={draw, decorate}] (45:1cm) .. controls (45:0.5cm)  and (315:0.5cm) .. (315:1cm);
  \draw[red, postaction={draw, decorate}] (225:1cm) .. controls (225: 0.5cm)  and (135:0.5cm) .. (135:1cm);
  \node at (0,0) {$\bullet$};
\end{scope}    
\begin{scope}[xshift =12cm]
  \draw (1,0) arc (0:360:1cm);
  \draw[red, postaction={draw, decorate}] (45:1cm) .. controls (45:0.5cm)  and (135:0.5cm) .. (135:1cm);
  \draw[red, postaction={draw, decorate}] (225:1cm) .. controls (225: 0.5cm)  and (315:0.5cm) .. (315:1cm);
  \node at (0,0) {$\bullet$};
\end{scope}    
 \end{scope}}}
\]

The matrix of the $R$-bilinear pairing on these vectors is:
  \[
    \begin{pmatrix}
      0 & 0 & 1 & -1 \\
      0 & 0 & 1 & 1   \\
      1 & 1 & E_1 & 0  \\
      -1 & 1 & 0 & E_1
    \end{pmatrix}\]
The determinant of this matrix is $4$, which is not a zero divisor.
Hence, $\kup{\SS_4^1}$ is a free $R$-module of rank $4$ and graded
rank $2(q+q^{-1})$.
At the same time,  $\kup{\SS^1_{+-}}$  and $\kup{\SS^1}$ are free $R$-modules of rank $2$ and graded rank $q+q^{-1}$. We see that rank and graded rank of the module $\kup{\SS^1_{\underline{\ell}}}$ assigned to a circle with a balanced sequence $\underline{\ell}=(\ell_1,\dots, \ell_{2k})$ of signs may depend on $k$. 
This is a more subtle phenomenon than the observation that 
$\kup{\SS^1_{\underline{\ell}}}=0$ for an unbalanced $\underline{\ell}$ due to absence of seamed embedded 
surfaces bounding $\SS^1_{\undell}$. 

Even more substantial dependence of $\kup{\SS^1_{\undell}}$ on the choice of a balanced sequence $\undell$ is investigated in~\cite{InprepKhovKitchKon} for a deformation of evaluation $\kup{\cdot}$.  

The rank of the state space $\kup{\SS^1_{\undell}}$ depends
not only on the length of $\undell$ but also on its cyclic ordering. Consider
$\SS^1_{4'} := \SS^1_{++--}$. One can show that
$\kup{\SS^1_{4'}}$ is generated by
\[
\NB{\NB{\tikz[scale= 0.8]{\input{\imagesfolder/fe_S4p}}}}
  \]
and that these two elements are linearly independent, thus
$\kup{\SS^1_{4'}}$ is a free $R$-module of graded rank $q+q^{-1}$.
Table~\ref{tab:grade-rang-kup} collects rank and graded rank information for state spaces $\kup{\SS^1_{\undell}}$, for shortest balanced sequences $\undell$. Cyclic permutation of the sequence does not change the state space. 
\begin{table}
\centering
 \begin{tabular}{| c | c | c | } 
 \hline 
  Sequence $\undell \rule{0pt}{2.5ex}$ & rank  & graded rank  \\ [0.5ex] 
 \hline 
  $\emptyset$ & 2 & $q+q^{-1}$ \rule{0pt}{2.5ex}\\ [0.5ex] 
 \hline
  ${+}{-}$ \rule{0pt}{2.5ex}& 2 & $q+q^{-1}$ \\ 
 [0.5ex] 
 \hline
  ${+}{+}{-}{-}$ \rule{0pt}{2.5ex} & 2 & $q+q^{-1}$ \\
  [0.5ex] 
 \hline
  ${+}{-}{+}{-}$ \rule{0pt}{2.5ex} & 4 & $2(q+q^{-1})$ \\
 [0.7ex] 
 \hline
\end{tabular}

\vspace{0.2cm} 
\caption{Ranks and graded ranks of the space  $\kup{\SS^1_{\undell}}$ for balanced $\undell$ of short length.}
\label{tab:grade-rang-kup}
\end{table}

This somewhat unusual behavior improves in the alternative evaluation $\kup{\cdot}_\omega$. Our original reason for considering evaluation $\kup{\cdot}_\omega$ was to rederive some of the constructions of Caprau~\cite{CaprauTangle, CaprauWebFoams} and Clark, Morrison, and Walker~\cite{ClarkMorrisonWalkerFunctoriality} via an evaluation framework. We encourage the reader to match the relations in~\cite[Section 3.2]{CaprauWebFoams} with those in Lemma~\ref{lem:evo-loc-rel} above. Orienting a seamed line, as done  in~\cite{CaprauWebFoams}, is equivalent to co-orienting it, as in~\cite{ClarkMorrisonWalkerFunctoriality} and the present paper, as long as the ambient surface comes with an orientation.

Using the neck-cutting relation for $\kup{\cdot}_\omega$ one obtains an analogue of Proposition~\ref{prop:kupp-add-circle}.

\begin{prop}\label{prop:unmarkedcircles}
  Let $C$ be an object of $\SeSu$ obtained by inserting an unmarked circle $\SS^1$ into one of the regions of a meom $C'$ as an innermost circle. Then $\kupo{C} \simeq A
  \otimes_R \kupo{C'}$.  
\end{prop}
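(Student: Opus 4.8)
The plan is to mimic the proof of Proposition~\ref{prop:kupp-add-circle} (equivalently Lemma~\ref{lem:kup-circle-unmarked}), replacing the local relation (\ref{eq:neck-cutting-kup}) and the spherical evaluations (\ref{eq:kup-ev-sphere}) by their $\omega$-counterparts (\ref{eq:neck-cutting-kup-omega}) and (\ref{eq:kupo-ev-sphere}) from Lemma~\ref{lem:evo-loc-rel}. The key point is that the $\omega$-evaluation of a \emph{closed} seamed surface differs from the ordinary evaluation only by the scalar $\omega^{-\theta(F)}$, and the neck-cutting relation (\ref{eq:neck-cutting-kup-omega}) is \emph{identical in form} to (\ref{eq:neck-cutting-kup}) (no extra $\omega$ appears, since cutting the neck of the unmarked tube does not change $\theta$); so the whole argument goes through essentially verbatim, producing the same Frobenius algebra $A$ but now over $R_\omega$.

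Concretely, first I would define an $R_\omega$-module map $\Phi_\omega\colon A_\omega \to \kupo{C}$ extending the one from Proposition~\ref{prop:kupp-add-circle}: on the new innermost circle, send the basis $\{1,X\}$ of $A$ to the classes of the seamed cup without a dot and with a dot, respectively, tensored with a fixed cobordism filling $C'$; more precisely, $\Phi_\omega = \mathrm{id}_A\otimes(\text{the map }A\to\kupo{C'})$ built so that $\kupo{C}\to A\otimes_R\kupo{C'}$ is surjective by the $\omega$-neck-cutting. Surjectivity: given any $F\colon\emptyset\to C$, apply (\ref{eq:neck-cutting-kup-omega}) along a small circle on the new component parallel to $\SS^1$; this expresses $[F]$ as an $R_\omega$-linear combination of three surfaces, each a disjoint union of a (possibly dotted) seamed cup over the new circle with a seamed surface bounding $C'$, times the scalar evaluation of a closed seamed surface (a sphere, possibly with a dot, evaluated via (\ref{eq:kupo-ev-sphere})), which lands in the image of $\Phi_\omega$.

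For injectivity I would exhibit that the two generators over the new circle are $R_\omega$-linearly independent relative to $\kupo{C'}$ by computing the relevant $2\times 2$ block of the bilinear pairing (\ref{eq:bil-pairing}) for $\kupo{\cdot}$: capping the two generating cups with the two generating caps produces closed components that are a dotless sphere, two spheres with one dot, and a twice-dotted sphere (the latter reduced via (\ref{eq:dot_reduction})$_\omega$), giving a matrix $\left(\begin{smallmatrix}0&1\\ 1&E_1\end{smallmatrix}\right)$ up to a global unit power of $\omega$ coming from the uniform seam count — invertible over $R_\omega$. Tensoring this nondegeneracy with the already-known structure of $\kupo{C'}$ (by an induction on the number of circles, or simply by the definition of the universal-construction pairing being nondegenerate on the right) forces $\Phi_\omega$ to be injective, hence an isomorphism of graded $R_\omega$-modules $\kupo{C}\cong A\otimes_R\kupo{C'}$.

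The main obstacle, and really the only subtlety, is bookkeeping the power of $\omega$: unlike the $R$-evaluation, every closed-surface scalar now carries a factor $\omega^{-\theta}$, and one must check that the $\omega$-powers arising on the two sides of the neck-cutting and pairing computations cancel or combine into a global invertible unit so that the Gram block remains unimodular over $R_\omega$. Since $\theta$ is unchanged by the neck-cut on an unmarked tube and is the same for all the capped-off spheres appearing in the pairing, these powers are uniform and invertible, so the obstacle is resolved; the rest is routine adaptation of Proposition~\ref{prop:kupp-add-circle}.
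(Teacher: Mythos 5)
Your proposal is correct and takes essentially the same route as the paper, which disposes of this proposition in one line by invoking the $\omega$-neck-cutting relation (\ref{eq:neck-cutting-kup-omega}) and adapting Proposition~\ref{prop:kupp-add-circle} (itself an adaptation of Lemma~\ref{lem:kup-circle-unmarked}); your surjectivity-via-neck-cutting and injectivity-via-the-Gram-block argument is exactly that adaptation. Your bookkeeping worry resolves even more simply than you state: since the inserted circle is unmarked, every closed component split off in the neck-cutting and pairing computations has $\theta=0$, so no power of $\omega$ appears at all and the Gram block is literally $\left(\begin{smallmatrix}0&1\\ 1&E_1\end{smallmatrix}\right)$.
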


\begin{lem}\label{lem:balanced-circle}
  Let $C$ be an object of $\SeSu$ with (at least) one balanced
  circle $S$. Let $C'$ be obtained from $C$ by changing marked points on $S$ 
  to obtain another balanced circle $S'$. Then $\kup{C}_\omega \simeq
  \kup{C'}_\omega$.
\end{lem}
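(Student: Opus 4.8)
The plan is to realize the prescribed change of markings on $S$ by a pair of mutually (up to a unit) inverse seamed cobordisms and to evaluate their two composites using the local relations of Lemma~\ref{lem:evo-loc-rel}; since $\kupo{\cdot}$ is a functor on $\SeSu$, this will immediately give the isomorphism of state spaces.

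First I would reduce to a single elementary move. Up to planar isotopy a marked circle is determined by its cyclic sequence of signs $\undell$, and cyclic rotation of $\undell$ is a planar isotopy, hence changes nothing. A balanced $\undell$ which is not empty must contain two \emph{adjacent} marked points of opposite co-orientation (otherwise all signs along the circle would coincide, forcing $\undell=\emptyset$), and erasing such a pair yields a shorter balanced sequence; iterating, every balanced $\undell$ reduces to $\emptyset$. As this deletion move is reversible (delete $\leftrightarrow$ insert an adjacent opposite pair), it suffices to treat the case where $S'$ is obtained from $S$ by deleting two adjacent marked points $m_a,m_b$ of opposite co-orientation, the rest of $C$ being left untouched, and to compose the resulting isomorphisms.

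Next I would write down the cobordisms. Let $W\colon C\to C'$ be the product cobordism on every circle of $C$ other than $S$, and over $S$ the cylinder $S\times[0,1]$ carrying ``through'' seams from the surviving marked points of $S$ straight up to those of $S'$, together with one extra seam arc: a small \emph{cap} with both endpoints on the bottom boundary joining $m_a$ to $m_b$. This is a legitimate object of $\SeSu$ precisely because $m_a,m_b$ are adjacent (so the cap avoids the other seams) and oppositely co-oriented (as required of a seam arc with both endpoints on one boundary component). Let $W'\colon C'\to C$ be the mirror cobordism, with a \emph{cup} arc recreating the two marked points near the top boundary. Since $W$ and $W'$ are cylinders with no dots, $\deg\kupo{W}=\deg\kupo{W'}=-\chi(W)+2d(W)=0$, so the isomorphism produced will preserve the grading.

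Then I would evaluate the two composites. In $W\circ W'$ the cup of $W'$ and the cap of $W$ meet along the middle copy of $S$ and merge into a single contractible seam circle in the interior of the cylinder over $S$; removing it by the circle‑removal relation \eqref{eq:circle-removal-omega-1} shows $\kupo{W\circ W'}$ is a unit ($\pm\omega$) times $\id_{\kupo{C'}}$. In $W'\circ W$ the cap sits near the bottom and the cup near the top, separated by a seam‑free corridor at the positions of $m_a,m_b$; isotoping them together there, the two horizontal bars become the two arcs of one of the two non‑crossing pairings of the four ports of a small disk, and the seam‑saddle relation \eqref{eq:seam-saddle} replaces them by a pair of vertical strands at the cost of a unit $\omega^{\pm1}$, turning $W'\circ W$ into the identity cylinder with all through‑seams, i.e. $\id_C$. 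Hence $\kupo{W'\circ W}$ is a unit times $\id_{\kupo{C}}$, and $\kupo{W},\kupo{W'}$ are inverse to each other up to invertible scalars, giving $\kupo{C}\simeq\kupo{C'}$ as graded $R_\omega$‑modules. The hard part will be the bookkeeping of co‑orientations — both in verifying that $W,W'$ are honest morphisms of $\SeSu$ and in confirming that the factors produced by \eqref{eq:seam-saddle} and \eqref{eq:circle-removal-omega-1} are units of $R_\omega$ — together with the explicit isotopy bringing the cap and cup into the local position where \eqref{eq:seam-saddle} applies. It is exactly this use of the seam‑saddle relation that has no counterpart for the undeformed evaluation $\kup{\cdot}$, which is why $\kup{\SS^1_{\undell}}$ genuinely depends on $\undell$ in Table~\ref{tab:grade-rang-kup}.
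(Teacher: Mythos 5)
Your proof is correct and follows essentially the same route as the paper: reduce to the insertion/deletion of a single adjacent pair of oppositely co-oriented marked points, then check that the cap and cup cobordisms are mutually inverse up to units via the circle-removal relation (\ref{eq:circle-removal-omega-1}) and the seam-saddle relation (\ref{eq:seam-saddle}). The paper states the reduction without justification and rescales one of the two cobordisms by $\omega$ so that the maps are exactly inverse, whereas you supply the combinatorial argument for the reduction and settle for inverses up to units; the substance is identical.
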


\begin{proof}
  It is enough to consider the case where $S'$ and $S$ differ by a pair of adjacent marked points with opposite
  co-orientation:

  \[
    S= \NB{\tikz[scale =0.6]{\begin{scope}
  \draw (-30:1) arc (-30:-150:1cm);
  \draw [dotted] (-30:1) arc (-30: 210:1cm);
  \draw[red, -latex, thick] (233:1cm) arc (233:235:1cm);
  \draw[red, -latex, thick] (307:1cm) arc (307:305:1cm);
\end{scope}}}\qquad\text{and}\qquad S'= \NB{\tikz[scale=0.6]{\begin{scope}
  \draw (-30:1) arc (-30:-150:1cm);
  \draw [dotted] (-30:1) arc (-30: 210:1cm);
\end{scope}}}.
  \]
  Relations (\ref{eq:circle-removal-omega-1}) and (\ref{eq:seam-saddle}) imply that the morphisms
  \[
    \kup{\NB{\tikz[scale=0.6]{\input{\imagesfolder/fe_balanced-iso-1}}}}_\omega\co \kup{S'}_\omega \to\kup{S}_\omega \ \qquad\text{and}\qquad
    \omega\kup{\NB{\tikz[scale=0.6]{\input{\imagesfolder/fe_balanced-iso-2}}}}_\omega \co \kup{S}_\omega \to\kup{S'}_\omega 
  \]
 are mutually inverse isomorphisms.
\end{proof}

\begin{cor}
  Let $C$ be an object of $\SeSu$ which consists of $k$ balanced
  circles, possibly nested. Then $\kup{C}_\omega$ is isomorphic to $A_\omega^{\otimes
  k}$ as a graded $R_\omega$-module. In particular, it is free of
  graded rank $(q+ q^{-1})^k$.
\end{cor}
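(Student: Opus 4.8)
The plan is to prove the statement by induction on the number $k$ of circles, using Proposition~\ref{prop:unmarkedcircles} to strip off circles one at a time and Lemma~\ref{lem:balanced-circle} to make each circle unmarked beforehand.

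First I would record two elementary reductions. Since $A_\omega = R_\omega\otimes_R A$, for any graded $R_\omega$-module $M$ there is a canonical graded isomorphism $A\otimes_R M \cong A_\omega\otimes_{R_\omega} M$ (associativity of tensor product together with $M = R_\omega\otimes_{R_\omega} M$); this lets me rewrite the conclusion of Proposition~\ref{prop:unmarkedcircles} as $\kupo{C}\simeq A_\omega\otimes_{R_\omega}\kupo{C'}$. Second, a balanced cyclic word in the symbols $+,-$ of positive length always contains two cyclically adjacent symbols of opposite sign, and deleting such a pair yields a shorter balanced word; iterating, every balanced sign sequence reduces to the empty sequence. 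Consequently, by repeated application of Lemma~\ref{lem:balanced-circle} (applied to one circle at a time, leaving the other circles of the meom untouched), $\kupo{C}$ is isomorphic, as a graded $R_\omega$-module, to $\kupo{\widehat{C}}$, where $\widehat{C}$ is obtained from $C$ by erasing all marks. Note that $\widehat{C}$ has the same underlying embedded $1$-manifold as $C$, so it is again a collection of $k$ (now unmarked) circles, possibly nested.

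Next I would carry out the induction on $k$ for collections $C$ of $k$ unmarked, possibly nested circles. The base case $k=0$ is $\kupo{\emptyset_1}\cong R_\omega = A_\omega^{\otimes 0}$, which holds by the universal construction exactly as in the non-$\omega$ case recorded at the start of Section~\ref{sec:ssfunc}. For the inductive step, choose a circle $S$ of $C$ that is innermost, i.e.\ bounds a disk region of $\RR^2$ containing no other circle of $C$; let $C'$ be $C$ with $S$ deleted, a collection of $k-1$ unmarked circles. Then $C$ is obtained from $C'$ by inserting the unmarked circle $S$ as an innermost circle, so Proposition~\ref{prop:unmarkedcircles} in the form above gives $\kupo{C}\simeq A_\omega\otimes_{R_\omega}\kupo{C'}$. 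By the inductive hypothesis $\kupo{C'}\simeq A_\omega^{\otimes(k-1)}$, hence $\kupo{C}\simeq A_\omega^{\otimes k}$; unwinding the first paragraph, the same isomorphism holds for the original marked $C$. Finally, $A_\omega$ is a free graded $R_\omega$-module with basis $\{1,X\}$ sitting in degrees $-1$ and $1$ (as in the remark after Lemma~\ref{lem:kup-circle-unmarked}), so $A_\omega^{\otimes k}$ is free of graded rank $(q+q^{-1})^k$.

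The routine points are the two reductions in the first paragraph and checking that all the isomorphisms invoked are degree-preserving — the cobordisms implementing Lemma~\ref{lem:balanced-circle} are annuli carrying only a rearrangement of seams, hence of Euler characteristic zero and degree zero, and the map $\Phi$ of Lemma~\ref{lem:kup-circle-unmarked} used inside Proposition~\ref{prop:unmarkedcircles} sends $1$ and $X$ to classes of degrees $-1$ and $1$. The only place needing genuine care is the bookkeeping of the nesting: one must ensure that after replacing an innermost balanced circle by an unmarked one via Lemma~\ref{lem:balanced-circle} it is still innermost, so that Proposition~\ref{prop:unmarkedcircles} applies — but this is immediate, since that lemma alters only the marks and not the embedding. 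I do not expect any substantive obstacle.
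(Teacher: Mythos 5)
Your proof is correct and follows exactly the route the paper intends: the corollary is stated there without proof as an immediate consequence of Lemma~\ref{lem:balanced-circle} (reduce every balanced circle to an unmarked one) and Proposition~\ref{prop:unmarkedcircles} (peel off innermost unmarked circles by induction). The extra details you supply --- the cyclic-word reduction to the empty sequence, the identification $A\otimes_R M\cong A_\omega\otimes_{R_\omega}M$, and the degree bookkeeping --- are all sound and fill in precisely what the paper leaves implicit.
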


\vspace{0.15in} 

{\bf Acknowledgments.} 
The authors would like to thank Yakov Kononov for valuable discussions.  
M.K. was partially supported by NSF grants DMS-1664240, and DMS-1807425 while working on this paper. L.-H.R.{} was supported by NCCR SwissMAP, funded by the Swiss National Science Foundation.

\bibliographystyle{abbrvurl}
\bibliography{biblio}

\end{document}